\newtheorem{theorem}{Theorem}[section]
\newtheorem{lemma}[theorem]{Lemma}
\newtheorem{proposition}[theorem]{Proposition}
\newtheorem{definition}[theorem]{Definition}
\newtheorem{assumption}[theorem]{Assumption}
\begin{document}
\setlength\arraycolsep{2pt}
\title{Sweeping Processes Perturbed by Rough Signals}
\author{Charles CASTAING*}
\address{*D\'epartement de Math\'ematiques, Universit\'e Montpellier 2, Montpellier, France}
\author{Nicolas MARIE**}
\address{**Laboratoire Modal'X, Universit\'e Paris 10, Nanterre, France}
\email{nmarie@parisnanterre.fr}
\address{**ESME Sudria, Paris, France}
\email{nicolas.marie@esme.fr}
\author{Paul RAYNAUD DE FITTE***}
\address{***Laboratoire Rapha\"el Salem, Universit\'e de Rouen Normandie, UMR CNRS 6085, Rouen, France}
\email{prf@univ-rouen.fr}
\keywords{Approximation scheme ; Fractional Brownian motion ; Rough differential equations ; Rough paths ; Skorokhod reflection problems ; Stochastic differential equations ; Sweeping processes}
\date{}
\maketitle
\noindent
%


%
\begin{abstract}
 This paper deals with the existence, the uniqueness and an approximation scheme of the solution to sweeping
 processes perturbed by a continuous signal of finite $p$-variation
 with $p\in [1,3[$. It covers pathwise stochastic
 noises directed by a fractional Brownian motion of Hurst parameter
 greater than $1/3$.
\end{abstract}
\tableofcontents
\noindent
\textbf{MSC2010 :} 34H05, 34K35, 60H10.
%


%
\section{Introduction}
Consider a multifunction $C : [0,T]\rightrightarrows\mathbb R^e$ with $e\in\mathbb N^*$. Roughly speaking, the Moreau sweeping process (see Moreau \cite{MOREAU76}) associated to $C$ is the path $X$, living in $C$, such that when it hits the frontier of $C$, a minimal force is applied to $X$ in order to keep it inside of $C$. Precisely, $X$ is the solution to the following differential inclusion:
\begin{equation}\label{sweeping_process}
\left\{
\begin{array}{rcl}
 -\displaystyle{\frac{dDY}{d|DY|}}(t) & \in & N_{C(t)}(Y(t))
 \textrm{ $|DY|$-a.e.}\\
 Y(0) & = & a\in C(0),
\end{array}
\right.
\end{equation}
where $DY$ is the differential measure associated with the continuous
function of bounded variation $Y$, $|DY|$ is its variation measure,
and $N_{C(t)}(Y(t))$ is the normal cone of $C(t)$ at $Y(t)$. This problem has been deeply studied by many authors. For instance, the reader can refer to Moreau \cite{MOREAU76}, Valadier \cite{VALADIER90} or Monteiro Marques \cite{MONTEIRO_MARQUES93}.
\\
\\
Several authors studied some perturbed versions of Problem (\ref{sweeping_process}), in particular by a stochastic multiplicative noise in It\^o's calculus framework (see Revuz and Yor \cite{RY99}). For instance, the reader can refer to Bernicot and Venel \cite{BV11} or Castaing et al.~\cite{CMR15}. On reflected diffusion processes, which perturbed sweeping processes with constant constraint set, the reader can refer to Kang and Ramanan \cite{KR10}.
\\
\\
Consider the perturbed Skorokhod problem
\begin{equation}\label{rough_sweeping_process}
\left\{
\begin{array}{rcl}
 X(t) & = &
 H(t) + Y(t)\\
 H(t) & = &
 \displaystyle{
 \int_{0}^{t}f(X(s))dZ(s)}\\
 -\displaystyle{\frac{dDY}{d|DY|}}(t) & \in & N_{C_H(t)}(Y(t))
 \textrm{ $|DY|$-a.e. with }
 Y(0) = a,
\end{array}
\right.
\end{equation}
where $C_H(t) = C(t)-H(t)$, $t \in [0, T]$ 
(thus $N_{C_H(t)}(Y(t))=N_{C(t)}(X(t))$),  
$Z : [0,T]\rightarrow\mathbb R^d$ is a continuous signal of finite
$p$-variation with $d\in\mathbb N^*$ and $p\in [1,\infty[$,
$f\in\textrm{Lip}^{\gamma}(\mathbb R^e,\mathcal M_{e,d}(\mathbb R))$
with $\gamma > p$, and the integral against $Z$ is taken in the sense
of rough paths. On the rough integral, the reader can refer to Lyons
\cite{LYONS04}, Friz and Victoir \cite{FV10} or Friz and Hairer
\cite{FH14}. Throughout the paper, the multifunction $C$ satisfies the
following assumption. 
%


%
\begin{assumption}\label{assumption_C}
$C$ is a convex compact valued multifunction, continuous for the Hausdorff distance, and there exists a continuous selection $\gamma : [0,T]\rightarrow\mathbb R^e$ satisfying
\begin{displaymath}
\overline B_e(\gamma(t),r)\subset\normalfont{\textrm{int}}(C(t))
\textrm{ $;$ }
\forall t\in [0,T],
\end{displaymath}
where $\overline B_e(\gamma(t),r)$ denotes the closed ball of radius
$r$ centered at $\gamma(t)$. 
\end{assumption}
\noindent
This assumption is equivalent to saying that $C(t)$ has nonempty interior for every $t\in[0,T]$, see \cite[Lemma 2.2]{CMR15}.
\\
\\
In Falkowski and S{\l}omi\'nski \cite{FS15}, when $p\in [1,2[$ and $C(t)$ is a cuboid of $\mathbb R^e$ for every $t\in [0,T]$, the authors proved the existence and uniqueness of the solution of Problem (\ref{rough_sweeping_process}). Furthermore, several authors studied the existence and uniqueness of the solution for reflected rough differential equations. In \cite{AIDA15}, M.~Besal\'u et al.~proved the existence and uniqueness of the solution for delayed rough differential equations with non-negativity constraints. Recently, S.~Aida gets the existence of solutions for a large class of reflected rough differential equations in \cite{AIDA16} and \cite{AIDA15}. Finally, in \cite{DGHT16}, A.~Deya et al.~proved the existence and uniqueness of the solution for 1-dimensional reflected rough differential equations. An interesting remark related to these references is that when $C$ is not a cuboid, moving or not, it is a challenge to get the uniqueness of the solution for reflected rough differential equations and sweeping processes.
\\
\\
For $p\in [1,3[$, the purpose of this paper is to prove the existence of solutions to Problem (\ref{rough_sweeping_process}) when $C$ satisfies Assumption \ref{assumption_C}, and a necessary and sufficient condition for uniqueness close to the monotonicity of the normal cone which allows to prove the uniqueness when $p = 1$ and there is an additive continuous signal of finite $q$-variation with $q\in [1,3[$. In this last case, the convergence of an approximation scheme is also proved.
\\
\\
Section 2 deals with some preliminaries on sweeping processes and the
rough integral. Section 3 is devoted to the existence of solutions to
Problem (\ref{rough_sweeping_process}) when $Z$ is a moderately
irregular signal (i.e. $p\in [1,2[$) and when $Z$ is a rough signal
(i.e. $p\in [2,3[$). Section 4 deals with some uniqueness results. The
convergence of an approximation scheme based on Moreau's catching up algorithm is proved in Section 5 when $p = 1$ and there is an additive continuous signal of finite $q$-variation with $q\in [1,3[$. Finally, Section 6 deals with sweeping processes perturbed by a pathwise stochastic noise directed by a fractional Brownian motion of Hurst parameter greater than $1/3$.
\\
\\
The following notations, definitions and properties are used throughout the paper.
\\
\\
\textbf{Notations and elementary properties:}
\begin{enumerate}[1.]
 \item $C_h(t) := C(t) - h(t)$ for every function $h : [0,T]\rightarrow\mathbb R^e$.
 \item $N_{C}(x)$ is the normal cone of $C$ at $x$, for any closed convex subset $C$ of $\mathbb R^e$ and any $x\in\mathbb R^e$ (recall that $N_{C}(x)=\emptyset$ if $x\not\in C$).
 \item $\Delta_T :=\{(s,t)\in [0,T]^2 : s < t\}$ and $\Delta_{s,t} :=\{(u,v)\in [s,t]^2 : u < v\}$ for every $(s,t)\in\Delta_T$.
 \item For every function $x$ from $[0,T]$ into $\mathbb R^d$ and $(s,t)\in\Delta_T$, $x(s,t) := x(t) - x(s)$.
 \item Consider $(s,t)\in\Delta_T$. The vector space of continuous functions from $[s,t]$ into $\mathbb R^d$ is denoted by $C^0([s,t],\mathbb R^d)$ and equipped with the uniform norm $\|.\|_{\infty,s,t}$ defined by
 \begin{displaymath}
 \|x\|_{\infty,s,t} :=
 \sup_{u\in [s,t]}\|x(u)\|
 \end{displaymath}
 for every $x\in C^0([s,t],\mathbb R^d)$, or the semi-norm $\|.\|_{0,s,t}$ defined by
 \begin{displaymath}
 \|x\|_{0,s,t} :=
 \sup_{u,v\in [s,t]}\|x(v) - x(u)\|
 \end{displaymath}
 for every $x\in C^0([s,t],\mathbb R^d)$. Moreover, $\|.\|_{\infty,T} :=\|.\|_{\infty,0,T}$, $\|.\|_{0,T} := \|.\|_{0,0,T}$ and
 \begin{displaymath}
 C_{0}^{0}([s,t],\mathbb R^d) :=
 \{x\in C^0([s,t],\mathbb R^d) : x(0) = 0\}.
 \end{displaymath}
 \item Consider $(s,t)\in\Delta_T$. The set of all dissections of $[s,t]$ is denoted by $\mathfrak D_{[s,t]}$ and the set of all strictly increasing sequences $(s_n)_{n\in\mathbb N}$ of $[s,t]$ such that $s_0 = s$ and $\lim_{\infty} s_n = t$ is a denoted by $\mathfrak D_{\infty,[s,t]}$.
 \item Consider $(s,t)\in\Delta_T$. A function $x : [s,t]\rightarrow\mathbb R^d$ has finite $p$-variation if and only if,
 \begin{eqnarray*}
  \|x\|_{p\textrm{-var},s,t} & := &
  \sup\left\{\left|
  \sum_{k = 1}^{n - 1}\|x(t_k,t_{k + 1})\|^p\right|^{1/p}\textrm{$;$ }
  n\in\mathbb N^*\textrm{ and }
  (t_k)_{k\in\llbracket 1,n\rrbracket}\in\mathfrak D_{[s,t]}\right\}\\
  & < & \infty.
 \end{eqnarray*}
 Consider the vector space
 \begin{displaymath}
 C^{p\textrm{-var}}([s,t],\mathbb R^d) :=
 \{x\in C^0([s,t],\mathbb R^d) :\|x\|_{p\textrm{-var},s,t} <\infty\}.
 \end{displaymath}
 The map $\|.\|_{p\textrm{-var},s,t}$ is a semi-norm on $C^{p\textrm{-var}}([s,t],\mathbb R^d)$.
 \\
 \\
 Moreover, $\|.\|_{p\textrm{-var},T} :=\|.\|_{p\textrm{-var},0,T}$.
 \\
 \\
 \textit{Remarks :}
 \begin{enumerate}[a.]
  \item For every $q,r\in [1,\infty[$ such that $q\geqslant r$,
  \begin{displaymath}
  \forall x\in C^{r\textrm{-var}}([s,t],\mathbb R^d)
  \textrm{, }
  \|x\|_{q\textrm{-var},s,t}
  \leqslant
  \|x\|_{r\textrm{-var},s,t}.
  \end{displaymath}
  In particular, any continuous function of bounded variation on $[s,t]$ belongs to $C^{q\textrm{-var}}([s,t],\mathbb R^d)$ for every $q\in [1,\infty[$.
  \item For every $(s,t)\in\Delta_T$ and $x\in C^{1\textrm{-var}}([s,t],\mathbb R)$,
  \begin{displaymath}
  \|x\|_{1\textrm{-var},s,t} =
  \int_{s}^{t}|Dx|,
  \end{displaymath}
  where $|Dx|$ is the variation measure of the differential measure $Dx$ associated with $x$.
 \end{enumerate}
 \item The vector space of Lipschitz continuous maps from $\mathbb R^e$ into $\mathcal M_{e,d}(\mathbb R)$ is denoted by $\textrm{Lip}(\mathbb R^e,\mathcal M_{e,d}(\mathbb R))$ and equipped with the Lipschitz semi-norm $\|.\|_{\textrm{Lip}}$ defined by
 \begin{displaymath}
 \|\varphi\|_{\textrm{Lip}} :=
 \sup\left\{
 \frac{\|\varphi(y) -\varphi(x)\|}{\|y - x\|}
 \textrm{ ; }
 x,y\in\mathbb R^e
 \textrm{ and }
 x\not= y\right\}
 \end{displaymath}
 for every $\varphi\in\textrm{Lip}(\mathbb R^e,\mathcal M_{e,d}(\mathbb R))$.
 \item For every $\lambda\in\mathbb R$,
 \begin{displaymath}
 \lfloor\lambda\rfloor :=
 \max\{n\in\mathbb Z : n <\lambda\}
 \end{displaymath}
 and $\{\lambda\} :=\lambda -\lfloor\lambda\rfloor$.
 \item Consider $\gamma\in [1,\infty[$. A continuous map $\varphi :\mathbb R^e\rightarrow\mathcal M_{d,e}(\mathbb R)$ is $\gamma$-Lipschitz in the sense of Stein if and only if,
 \begin{eqnarray*}
  \|\varphi\|_{\textrm{Lip}^{\gamma}} & := &
  \|D^{\lfloor\gamma\rfloor}\varphi\|_{\{\gamma\}\textrm{-H\"ol}}\vee
  \max\{\|D^k\varphi\|_{\infty}\textrm{ $;$ }k\in\llbracket 0,\lfloor\gamma\rfloor\rrbracket\}\\
  & < & \infty.
 \end{eqnarray*}
 Consider the vector space
 \begin{displaymath}
 \textrm{Lip}^{\gamma}(\mathbb R^e,\mathcal M_{e,d}(\mathbb R)) :=
 \{\varphi\in C^0(\mathbb R^e,\mathcal M_{e,d}(\mathbb R)) :\|\varphi\|_{\textrm{Lip}^{\gamma}} <\infty\}.
 \end{displaymath}
 The map $\|.\|_{\textrm{Lip}^{\gamma}}$ is a norm on $\textrm{Lip}^{\gamma}(\mathbb R^e,\mathcal M_{e,d}(\mathbb R))$.
 \\
 \\
 \textit{Remarks}:
 \begin{enumerate}[a.]
  \item If $\varphi\in\textrm{Lip}^{\gamma}(\mathbb R^e,\mathcal M_{e,d}(\mathbb R))$, then $\varphi\in\textrm{Lip}(\mathbb R^e,\mathcal M_{e,d}(\mathbb R))$.
  \item If $\varphi\in C^{\lfloor\gamma\rfloor + 1}(\mathbb R^e,\mathcal M_{e,d}(\mathbb R))$ is bounded with bounded derivatives, then $\varphi\in\textrm{Lip}^{\gamma}(\mathbb R^e,\mathcal M_{e,d}(\mathbb R))$.
 \end{enumerate}
\end{enumerate}
%


%
\section{Preliminaries}
This section deals with some preliminaries on sweeping processes and
the rough integral. The first subsection states some fundamental
results on unperturbed sweeping processes coming from Moreau
\cite{MOREAU76}, Valadier \cite{VALADIER90} and Monteiro Marques
\cite{MONTEIRO_MARQUES93}. A continuity result of  Castaing et
al.~\cite{CMR15}, which is the cornerstone of the proofs of Theorem
\ref{existence_Young} and Theorem \ref{existence_rough}, is also
stated. The second subsection deals with the integration along rough
paths. In this paper, definitions and propositions are stated as in
Friz and Hairer \cite{FH14}, in accordance with M.~Gubinelli's approach (see Gubinelli \cite{GUBINELLI04}).
%


%
\subsection{Sweeping processes}
The following theorem, due to Monteiro Marques
\cite{MONTEIRO_MARQUES84,MONTEIRO_MARQUES86,MONTEIRO_MARQUES93} 
using an estimation due to  Valadier (see
\cite{CASTAING83,VALADIER90}), 
states a sufficient condition of existence and uniqueness of the solution of the unperturbed sweeping process defined by Problem (\ref{sweeping_process}).
%


%
\begin{proposition}\label{Valadier_theorem}
Assume that $C$ is a convex compact valued multifunction, continuous
for the Hausdorff distance, and such that there exists $(x,r)\in\mathbb R^e\times ]0,\infty[$ satisfying
\begin{displaymath}
\overline B_e(x,r)\subset C(t)
\textrm{ $;$ }
\forall t\in [0,T].
\end{displaymath}
Then Problem (\ref{sweeping_process}) has a unique continuous solution of finite $1$-variation $y : [0,T]\rightarrow\mathbb R^e$ such that
\begin{displaymath}
\|y\|_{1\normalfont{\textrm{-var}},T}
\leqslant l(r,\|a - x\|),
\end{displaymath}
where $l :\mathbb R_{+}^{2}\rightarrow\mathbb R_+$ is the map defined by
\begin{displaymath}
l(s,S) :=
\left\{
\begin{array}{rcl}
\displaystyle{
\max\left\{0,\frac{S^2 - s^2}{2s}\right\}} & \textrm{if} & e > 1\\
\max\{0,S - s\} & \textrm{if} & e = 1
\end{array}
\right.
\textrm{$;$ }
\forall s,S\in\mathbb R_+.
\end{displaymath}
\end{proposition}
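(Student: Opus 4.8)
The plan is to establish existence through Moreau's \emph{catching-up algorithm}, to extract the $1$-variation bound from a discrete \emph{Valadier-type energy estimate}, and to deduce uniqueness from the monotonicity of the normal cone. Throughout, the convexity and compactness of the values of $C$ guarantee that the metric projection $\mathrm{proj}_{C(t)}$ is well defined, single valued and non-expansive, while the continuity of $C$ for the Hausdorff distance supplies a uniform modulus of continuity that will be transferred to the approximants.

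First I would fix a dissection $0=t_0^n<\dots<t_{k_n}^n=T$ of mesh tending to $0$, set $y_0=a$ and define iteratively $y_{i+1}=\mathrm{proj}_{C(t_{i+1}^n)}(y_i)$, so that $y_i-y_{i+1}\in N_{C(t_{i+1}^n)}(y_{i+1})$ and $y_i\in C(t_i^n)$ at every step. Denoting by $Y^n$ the associated continuous, piecewise affine interpolant, the increments obey $\|y_{i+1}-y_i\|=\mathrm{dist}(y_i,C(t_{i+1}^n))\le d_H(C(t_i^n),C(t_{i+1}^n))$, which yields equicontinuity, and $Y^n(t)$ stays in a fixed compact set since $C$ is compact valued and continuous. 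The heart of the a priori bound, for $e>1$, is the following observation: writing $\nu=(y_i-y_{i+1})/\|y_i-y_{i+1}\|$ when the increment is nonzero, the point $x+r\nu$ lies in $\overline B_e(x,r)\subset C(t_{i+1}^n)$, so the normal cone inequality gives $\langle y_i-y_{i+1},y_{i+1}-x\rangle\ge r\|y_i-y_{i+1}\|$; expanding $\|y_i-x\|^2$ then produces
\[
\|y_i-x\|^2-\|y_{i+1}-x\|^2\ge 2r\,\|y_{i+1}-y_i\|.
\]
Telescoping gives $2r\sum_i\|y_{i+1}-y_i\|\le\|a-x\|^2-\|y_{k_n}-x\|^2$; combined with the fact that the iteration freezes once a point enters $\overline B_e(x,r)$ (the projection becoming the identity) and that every moving step lands on $\partial C(t_{i+1}^n)$, which cannot meet the open ball of radius $r$ about $x$ and is therefore at distance $\ge r$ from $x$, one upgrades this to $\sum_i\|y_{i+1}-y_i\|\le(\|a-x\|^2-r^2)/(2r)$ when $\|a-x\|>r$ and to $0$ otherwise, matching $l(r,\|a-x\|)$. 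The case $e=1$ I would treat separately by a direct monotonicity argument, the scalar motion being one-sided until the point is caught, which gives the linear bound $\max\{0,\|a-x\|-r\}$.

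With a uniform bound $V:=l(r,\|a-x\|)$ on $\|Y^n\|_{1\textrm{-var},T}$ and uniform equicontinuity in hand, Helly's selection theorem together with Arzel\`a--Ascoli furnishes a subsequence converging uniformly to a continuous limit $Y$ with $\|Y\|_{1\textrm{-var},T}\le V$ and $Y(t)\in C(t)$ for all $t$. The delicate point, which I expect to be the main obstacle, is to show that $Y$ actually solves the inclusion. I would avoid manipulating the normal cones directly and instead pass to the limit in the equivalent variational formulation: $-\frac{dDY}{d|DY|}(t)\in N_{C(t)}(Y(t))$ holds $|DY|$-a.e.\ if and only if $\int_{(0,T]}\langle w(u)-Y(u),dDY(u)\rangle\ge 0$ for every continuous selection $w$ of $C$. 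The discrete iterates satisfy the analogous inequality at the grid level; the difficulty is to control the convergence of the differential measures $DY^n$ toward $DY$ tested against the moving, merely continuous selections, which I would handle through the weak-$*$ convergence of $DY^n$, the uniform variation bound, and the closedness of the graph of the normal-cone multifunction, i.e.\ its upper semicontinuity in both arguments.

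Finally, uniqueness would follow from monotonicity. Given two solutions $Y_1,Y_2$ with $Y_1(0)=Y_2(0)=a$, I would introduce the common dominating measure $\mu=|DY_1|+|DY_2|$, so that $-\frac{dDY_j}{d\mu}(t)\in N_{C(t)}(Y_j(t))$ $\mu$-a.e.\ (the normal cone being a cone), and then study the continuous-BV potential $\psi(t)=\tfrac12\|Y_1(t)-Y_2(t)\|^2$. The chain rule for continuous functions of bounded variation gives $d\psi=\langle Y_1-Y_2,\,dDY_1-dDY_2\rangle$, and the monotonicity inequality $\langle\frac{dDY_1}{d\mu}-\frac{dDY_2}{d\mu},\,Y_1-Y_2\rangle\le 0$ shows that $\psi$ is non-increasing; since $\psi(0)=0$ and $\psi\ge 0$, this forces $\psi\equiv 0$, whence $Y_1=Y_2$.
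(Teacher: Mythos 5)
Your proof is correct and follows the same overall route that the paper delegates to Monteiro Marques \cite{MONTEIRO_MARQUES93} and encodes in Propositions \ref{characterization_sweeping_process} and \ref{approximation_scheme_SP}: Moreau's catching-up algorithm, the Valadier energy estimate $\|y_i-x\|^2-\|y_{i+1}-x\|^2\ge 2r\|y_{i+1}-y_i\|$ obtained by testing the normal-cone inequality at $x+r\nu$, and uniqueness by monotonicity of the normal cone; your sharpening of the telescoped bound to $(\|a-x\|^2-r^2)/(2r)$, via the observation that every moving step lands on the boundary of $C(t_{i+1}^n)$ and hence at distance at least $r$ from $x$, recovers the exact constant $l(r,\|a-x\|)$. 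The one point where you diverge, and where your sketch is at its most fragile, is the identification of the limit. The paper's route does not pass through weak-$*$ convergence of the measures $DY^n$ or closedness of the graph of $(t,y)\mapsto N_{C(t)}(y)$ (delicate for weakly converging densities, and only salvageable because normal cones are convex); it uses instead the integrated characterization of Proposition \ref{characterization_sweeping_process}, namely $\langle z,y(t)-y(s)\rangle\ge\tfrac12(\|y(t)\|^2-\|y(s)\|^2)$ for \emph{constant} test points $z\in\cap_{\tau\in[s,t]}C(\tau)$, which the discrete iterates satisfy exactly (compare Lemma \ref{inequality_Yn}) and which is stable under uniform convergence with no reference to the measures $DY^n$. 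Your alternative formulation $\int\langle w(u)-Y(u),dDY(u)\rangle\ge 0$ against continuous selections $w$ does work and is essentially the same device; once you adopt it, the ``main obstacle'' you flag dissolves, since $\int\langle w,dDY^n\rangle\to\int\langle w,dDY\rangle$ by weak-$*$ convergence under the uniform variation bound while $\int\langle Y^n,dDY^n\rangle=\tfrac12(\|Y^n(t)\|^2-\|Y^n(s)\|^2)$ converges by uniform convergence alone. A minor further difference: the paper gets convergence of the whole sequence from the explicit Cauchy estimate of Proposition \ref{approximation_scheme_SP}.(2), whereas you extract a subsequence by Arzel\`a--Ascoli; since uniqueness is proved independently, this is immaterial for the statement.
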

\noindent
This proposition is a consequence of the two following ones. These two propositions are also used in Section 5.
%


%
\begin{proposition}\label{characterization_sweeping_process}
Under Assumption \ref{assumption_C}, a map $y : [0,T]\rightarrow\mathbb R^e$ is a solution of Problem (\ref{sweeping_process}) if it satisfies the two following conditions:
\begin{enumerate}
 \item For every $t\in [0,T]$, $y(t)\in C(t)$.
 \item For every $(s,t)\in\Delta_T$ and $z\in\cap_{\tau\in [s,t]}C(\tau)$,
 \begin{displaymath}
 \langle z,y(t) - y(s)\rangle
 \geqslant\frac{1}{2}(\|y(t)\|^2 - \|y(s)\|^2).
 \end{displaymath}
\end{enumerate}
\end{proposition}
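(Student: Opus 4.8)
The plan is to treat $y$ as a continuous function of bounded variation, which is implicit in the very formulation of Problem (\ref{sweeping_process}): write $DY$ for its differential (vector) measure, $|DY|$ for the variation measure, and $y' := dDY/d|DY|$ for the Radon--Nikodym density (a $|DY|$-integrable, $|DY|$-a.e.\ unit vector field). Since each $C(\tau)$ is closed and convex, unfolding the definition of the normal cone shows that the differential inclusion in (\ref{sweeping_process}) is equivalent to
\[
\langle y'(\tau),z - y(\tau)\rangle\geqslant 0\quad\textrm{for all }z\in C(\tau),\ |DY|\textrm{-a.e. }\tau .
\]
Condition (1) is exactly the constraint $y(\tau)\in C(\tau)$, so the whole task reduces to deriving this family of pointwise inequalities from (2). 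I would do this in two moves: first recast (2) as an integrated inequality against $DY$, then localize it by a differentiation argument.

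For the first move I would use the chain rule for continuous functions of bounded variation. Because $y$ is continuous, $|DY|$ is atomless, there are no jump contributions, and $\tfrac12(\|y(t)\|^2 - \|y(s)\|^2) = \int_{]s,t]}\langle y(\tau),dDY(\tau)\rangle$, while trivially $\langle z,y(t) - y(s)\rangle = \int_{]s,t]}\langle z,dDY(\tau)\rangle$. Subtracting, condition (2) becomes: for every $(s,t)\in\Delta_T$ and every $z\in\cap_{\tau\in[s,t]}C(\tau)$,
\[
\int_{]s,t]}\langle z - y(\tau),y'(\tau)\rangle\,d|DY|(\tau)\geqslant 0 .
\]

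For the second move I would fix a point $\tau_0$ that is a right-hand Lebesgue point of the density $y'$ with respect to $|DY|$, i.e.\ $|DY|(]\tau_0,\tau_0+h])>0$ for all small $h>0$ and $\tfrac{1}{|DY|(]\tau_0,\tau_0+h])}\int_{]\tau_0,\tau_0+h]}\|y'(\tau)-y'(\tau_0)\|\,d|DY|(\tau)\to 0$ as $h\to 0^+$; by the Lebesgue--Besicovitch differentiation theorem on the line, $|DY|$-a.e.\ $\tau_0$ is such a point. Together with the continuity of $y$ this gives, for every fixed $z$,
\[
\lim_{h\to 0^+}\frac{1}{|DY|(]\tau_0,\tau_0+h])}\int_{]\tau_0,\tau_0+h]}\langle z - y(\tau),y'(\tau)\rangle\,d|DY|(\tau)=\langle z - y(\tau_0),y'(\tau_0)\rangle .
\]
Now take $z\in\textrm{int}(C(\tau_0))$: since $C$ is continuous for the Hausdorff distance, $z\in C(\tau)$ for every $\tau\in[\tau_0,\tau_0+h]$ once $h$ is small, so $z\in\cap_{\tau\in[\tau_0,\tau_0+h]}C(\tau)$ and the integrated inequality from the first move applies on $[\tau_0,\tau_0+h]$; dividing by $|DY|(]\tau_0,\tau_0+h])$ and passing to the limit yields $\langle z - y(\tau_0),y'(\tau_0)\rangle\geqslant 0$. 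As $C(\tau_0)$ is convex with nonempty interior by Assumption \ref{assumption_C}, we have $C(\tau_0)=\overline{\textrm{int}(C(\tau_0))}$, and a closure argument extends the inequality to all $z\in C(\tau_0)$, i.e.\ $-y'(\tau_0)\in N_{C(\tau_0)}(y(\tau_0))$, as required.

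The main obstacle is this localization step. The delicate point is that the test vector must remain in the shrinking intersection $\cap_{\tau\in[\tau_0,\tau_0+h]}C(\tau)$, which is exactly why I test only with interior points of $C(\tau_0)$ and recover its boundary afterwards by taking closures, crucially using Assumption \ref{assumption_C} to guarantee $\textrm{int}(C(\tau_0))\neq\emptyset$. The passage from the averaged inequality to the pointwise one rests on one-sided Lebesgue differentiation with respect to the possibly singular measure $|DY|$; writing the integrand as $\langle z,y'(\tau)\rangle-\langle y(\tau),y'(\tau)\rangle$ shows that a single Lebesgue point of the vector density $y'$ handles all $z$ simultaneously, so no uniformity over test points is needed. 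The chain-rule identity invoked in the first move is standard but deserves a line, since it is precisely the atomlessness of $|DY|$ (continuity of $y$) that removes the jump terms.
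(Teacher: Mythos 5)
Your argument is correct, and it is essentially the classical one: the paper gives no proof of this proposition but defers to Monteiro Marques \cite{MONTEIRO_MARQUES93}, Chapter~2, where the differential inclusion is recovered from the integrated variational inequality by the same two moves you use --- the differential-measure identity $\tfrac12 D(\|y\|^2)=\langle y,Dy\rangle$ (valid because continuity of $y$ makes $|Dy|$ atomless) followed by a Lebesgue-point localization of the density $dDy/d|Dy|$, testing against interior points of $C(\tau_0)$ and closing up. Two points deserve a little more care than you give them. First, you posit at the outset that $y$ is continuous of bounded variation; this is not among the stated hypotheses, and it is in fact a consequence of conditions (1)--(2) under Assumption \ref{assumption_C}: taking $z=\gamma(t_k)+u$ with $\|u\|\leqslant R$ in (2) on a subinterval of the dissection (\ref{dissection_C}) and optimizing over $u$ gives $R\,\|y(t)-y(s)\|\leqslant\langle\gamma(t_k)-\tfrac12(y(t)+y(s)),\,y(t)-y(s)\rangle$, whose right-hand side telescopes over partitions (the Valadier estimate behind Proposition \ref{Valadier_theorem}), yielding bounded variation; continuity then follows by letting $t\downarrow s$ (resp.\ $s\uparrow t$) in (2) with the admissible test point $z=y(s)$ (resp.\ $z=y(t^-)$), which forces $\|y(s^+)-y(s)\|^2\leqslant 0$. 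Either derive this or record it as a standing hypothesis. Second, your localization invokes a \emph{one-sided} Lebesgue differentiation theorem with respect to the possibly singular measure $|Dy|$; Besicovitch's theorem is stated for symmetric windows, and while the one-sided version does hold on the line (half-open intervals admit a covering lemma with overlap $2$, hence a weak $(1,1)$ maximal inequality), you can avoid the issue entirely by averaging over $[\tau_0-h,\tau_0+h]$, to which condition (2) applies just as well since $z\in\mathrm{int}(C(\tau_0))$ belongs to $\cap_{\tau\in[\tau_0-h,\tau_0+h]}C(\tau)$ for small $h$ by the same Hausdorff-continuity argument.
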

%


%
\begin{proposition}\label{approximation_scheme_SP}
Consider $n\in\mathbb N^*$, $(t_{0}^{n},\dots,t_{n}^{n})$ the dissection of $[0,T]$ of constant mesh $T/n$ and the step function $Y^n$ defined by
\begin{displaymath}
\left\{
\begin{array}{rcl}
 Y_{0}^{n} & := & a\\
 Y_{k + 1}^{n} & = &
 p_{C(t_{k + 1}^{n})}(Y_{k}^{n})\textrm{ $;$ }k\in\llbracket 0,n - 1\rrbracket\\
 Y^n(t) & := & Y_{k}^{n}\textrm{ $;$ } t\in [t_{k}^{n},t_{k + 1}^{n}[\textrm{$,$ }k\in\llbracket 0,n - 1\rrbracket.
\end{array}
\right.
\end{displaymath}
\begin{enumerate}
 \item Under the conditions of Proposition \ref{Valadier_theorem} on $C$, $\|Y^n\|_{1\normalfont{\textrm{-var}},T}\leqslant l(r,\|a - x\|)$.
 \item Under Assumption \ref{assumption_C}, for every $m\in n\mathbb N^*$ and $t\in [0,T]$, there exist $i\in\llbracket 1,n\rrbracket$ and $j\in\llbracket 1,m\rrbracket$ such that $t\in [t_{i - 1}^{n},t_{i}^{n}[$, $t\in [t_{j - 1}^{m},t_{j}^{m}[$ and
 \begin{displaymath}
 \|Y^n(t) - Y^m(t)\|^2
 \leqslant 2d_H(C(t_{i}^{n}),C(t_{j}^{m}))
 (\|Y^n\|_{1\normalfont{\textrm{-var}},t} +
 \|Y^m\|_{1\normalfont{\textrm{-var}},t}).
 \end{displaymath}
\end{enumerate}
\end{proposition}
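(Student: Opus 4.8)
The plan is to treat the two parts separately, both resting on the variational characterization of the metric projection onto a closed convex set: if $Y_{k+1}^n=p_{C(t_{k+1}^n)}(Y_k^n)$, then $\langle Y_k^n-Y_{k+1}^n,z-Y_{k+1}^n\rangle\leqslant 0$ for every $z\in C(t_{k+1}^n)$, equivalently $\langle Y_{k+1}^n-Y_k^n,z-Y_{k+1}^n\rangle\geqslant 0$.

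For Part 1, I would write $v_k:=Y_{k+1}^n-Y_k^n$ and $a_k:=\|Y_k^n-x\|$. When $e>1$ and $v_k\neq 0$, I would test the projection inequality at the point $x-r\,v_k/\|v_k\|$, which lies in $\overline B_e(x,r)\subset C(t_{k+1}^n)$; this yields $\langle v_k,x-Y_k^n\rangle\geqslant\|v_k\|(r+\|v_k\|)$, and expanding $a_{k+1}^2=\|Y_k^n-x+v_k\|^2$ then gives the telescoping estimate $\|v_k\|\leqslant(a_k^2-a_{k+1}^2)/(2r)$. Summing over $k\in\llbracket 0,n-1\rrbracket$ and using $\|Y^n\|_{1\textrm{-var},T}=\sum_k\|v_k\|$ collapses the right-hand side to $(a_0^2-a_n^2)/(2r)$. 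The sharp constant $l(r,\|a-x\|)$ requires $a_n\geqslant r$, which follows because any genuine projection step lands on $\partial C(t_{k+1}^n)$, disjoint from the open ball $B_e(x,r)\subset\textrm{int}(C(t_{k+1}^n))$; if no step moves, then $a\in C(t)$ for all $t$ and the variation is $0$. For $e=1$ the argument is simpler: when $a\geqslant x+r$ the iterates remain $\geqslant x+r$ and are nonincreasing, so the total variation telescopes to $a-\lim_k Y_k^n\leqslant\|a-x\|-r$, and symmetrically for $a\leqslant x-r$.

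For Part 2 I would exploit that $m\in n\mathbb N^*$ forces the coarse dissection to be a subdissection of the fine one, so the coarse iterates are defined at every fine node. Writing $b_l:=Y_l^m$ and $a_l:=Y^n(t_l^m)$, both starting at $a$, I would control the energy $\rho_l:=\|a_l-b_l\|^2$ step by step through $\rho_l-\rho_{l-1}=2\langle a_l-b_l,\Delta a_l-\Delta b_l\rangle-\|\Delta a_l-\Delta b_l\|^2\leqslant 2\langle a_l-b_l,\Delta a_l\rangle+2\langle b_l-a_l,\Delta b_l\rangle$, where $\Delta a_l:=a_l-a_{l-1}$ and $\Delta b_l:=b_l-b_{l-1}$. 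Since $b_l=p_{C(t_l^m)}(b_{l-1})\in C(t_l^m)$ exactly, testing the coarse projection inequality for $\Delta a_l$ at $z=b_l$ makes the first term nonpositive; for the second term, $a_l$ lies in $C(\sigma)$ for the left node $\sigma\leqslant t_l^m$ of its coarse cell, so replacing it by its nearest point in $C(t_l^m)$ produces a factor $d_H(C(\sigma),C(t_l^m))\,\|\Delta b_l\|$. Telescoping from $\rho_0=0$ up to the fine cell containing $t$ then bounds $\|Y^n(t)-Y^m(t)\|^2$ by $2\,d_H(C(t_i^n),C(t_j^m))$ times $\|Y^m\|_{1\textrm{-var},t}$, and, since adding the nonnegative coarse variation only enlarges the right-hand side, by $\|Y^n\|_{1\textrm{-var},t}+\|Y^m\|_{1\textrm{-var},t}$.

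The main obstacle I anticipate is the bookkeeping of the nested grids in Part 2: one must verify that at each fine node the moving iterate sits exactly in $C(t_l^m)$ while the other sits in $C$ at a nearby coarse node, and then organize the telescoped Hausdorff factors so that the single distance $d_H(C(t_i^n),C(t_j^m))$ attached to the cells containing $t$ dominates the intermediate ones, using the continuity of $C$ together with the monotonicity of the cell-wise distances along the refinement. The projection steps of Part 1, by contrast, are routine once the test point $x-r\,v_k/\|v_k\|$ has been chosen; there the only delicate point is justifying $a_n\geqslant r$ in order to reach the sharp constant rather than the weaker bound $\|a-x\|^2/(2r)$.
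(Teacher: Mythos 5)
The paper gives no proof of this proposition --- it refers to Monteiro Marques \cite{MONTEIRO_MARQUES93}, Chapter 2 --- so your argument has to stand on its own. Part 1 does: testing the projection inequality at $x-r\,v_k/\|v_k\|$ gives $\langle v_k,x-Y_k^n\rangle\geqslant\|v_k\|(r+\|v_k\|)$, hence $2r\|v_k\|\leqslant a_k^2-a_{k+1}^2$ after expanding $a_{k+1}^2$, and your observation that every nontrivial projection step lands on $\partial C(t_{k+1}^n)$, which is disjoint from $B_e(x,r)$, gives $a_n\geqslant r$ and the sharp constant; the case $e=1$ and the degenerate case where no step moves are handled correctly. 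This is exactly Valadier's estimate.

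Part 2 is where the difficulty you yourself flag is real, and it cannot be closed the way you hope. Your telescoping correctly produces
\begin{displaymath}
\|Y^n(t)-Y^m(t)\|^2\;\leqslant\;2\sum_{l=1}^{L}d_H\bigl(C(\sigma_l),C(t_l^m)\bigr)\,\|Y_l^m-Y_{l-1}^m\|,
\end{displaymath}
where $\sigma_l$ is the left coarse node of the cell containing $t_l^m$ and $L$ is the index of the fine cell containing $t$; this yields the stated inequality with $d_H(C(t_i^n),C(t_j^m))$ replaced by $\max_{l\leqslant L}d_H(C(\sigma_l),C(t_l^m))$. But there is no ``monotonicity of the cell-wise distances along the refinement'': the Hausdorff factors accumulated over earlier cells are not dominated by the one attached to the cells containing $t$, and continuity of $C$ gives no such comparison. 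In fact the single-factor inequality, read literally, fails: take $e=1$, $T=1$, $n=1$, $m=2$, $C(t)=[-1-c|2t-1|,\,1+c|2t-1|]$ with $c>0$ (Assumption \ref{assumption_C} holds with $\gamma\equiv 0$ and $r=1/2$) and $a=1+c\in C(0)$; for $t\in[1/2,1[$ the indices are forced to be $i=1$, $j=2$, so $t_1^1=t_2^2=1$ and the right-hand side is $0$, while $\|Y^1(t)-Y^2(t)\|=\|a-p_{C(1/2)}(a)\|=c>0$. So the remaining step is not a bookkeeping issue but an impossibility. What your computation actually proves is the estimate with $\sup\{d_H(C(u),C(v))\;;\;u,v\ \textrm{nodes of the two grids},\ |u-v|\leqslant T/n\}$ in place of $d_H(C(t_i^n),C(t_j^m))$, and that weaker form is all that is needed where the proposition is invoked (in the proof of Theorem \ref{convergence_subsequences} the factor is immediately bounded by $K(T/n)^\alpha$ using the H\"older hypothesis on $C$). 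You should state and prove that version rather than try to fill the gap.
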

\noindent
See Monteiro Marques \cite{MONTEIRO_MARQUES93}, Chapter 2 for the proofs of the three previous propositions.
\\
\\
Let $h$ be a continuous function from $[0,T]$ into $\mathbb R^e$ such that $h(0) = 0$. If it exists, a Skorokhod decomposition of $(C,a,h)$ is a couple $(v_h,w_h)$ such that:
\begin{equation}\label{Skorokhod_decomposition}
\left\{
  \begin{array}{rcl}
 v_h(t) & = & h(t) + w_h(t)\\
 -\displaystyle{\frac{dDw_h}{d|Dw_h|}}(t) & \in & N_{C_h(t)}(w_h(t))
 \textrm{ $|Dw_h|$-a.e. with }
 w_h(0) = a,
\end{array}
\right.
\end{equation}
where $v_h$ and $w_h$ are continuous, and $w_h$ has bounded
variation. 
Since $N_{C_h(t)}(x)=\emptyset$ when $x\not\in C_h(t)$,
the system \eqref{Skorokhod_decomposition} implies that, $|Dw_h|$-a.e.,
$w_h(t)\in C_h(t)$, that
is, $v_h(t)\in C(t)$.
Under Assumption \ref{assumption_C}, by Proposition \ref{Valadier_theorem} together with Castaing et al.~\cite[Lemma 2.2]{CMR15}, $(C,a,h)$ has a unique Skorokhod decomposition $(v_h,w_h)$.
%


%
\begin{theorem}\label{PRF_key_result}
Under Assumption \ref{assumption_C}, if $(h_n)_{n\in\mathbb N}$ is a sequence of continuous functions from $[0,T]$ into $\mathbb R^e$ which converges uniformly to $h\in C^0([0,T],\mathbb R^e)$, then
\begin{displaymath}
\sup_{n\in\mathbb N}
\|w_{h_n}\|_{1\normalfont{\textrm{-var}},T} <\infty
\end{displaymath}
and
\begin{displaymath}
(v_{h_n},w_{h_n})
\xrightarrow[n\rightarrow\infty]{\|.\|_{\infty,T}}
(v_h,w_h).
\end{displaymath}
\end{theorem}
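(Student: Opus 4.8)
The plan is to view $w_{h_n}$ as the (unperturbed) sweeping process attached to the translated multifunction $C_{h_n}=C-h_n$ and to reduce the statement to two ingredients: a variation bound uniform in $n$, and a quantitative comparison estimate between two drivers. First I would record the reduction. For a continuous $h$ with $h(0)=0$, the second line of \eqref{Skorokhod_decomposition} says precisely that $w_h$ solves Problem \eqref{sweeping_process} with $C$ replaced by $C_h$. Since $\overline B_e(\gamma(t),r)\subset\mathrm{int}(C(t))$, we get $\overline B_e(\gamma(t)-h(t),r)\subset\mathrm{int}(C_h(t))$, so $C_h$ again satisfies Assumption \ref{assumption_C}, with the continuous selection $\gamma-h$ and the \emph{same} radius $r$. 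As $h_n\to h$ uniformly, the number $M:=\sup_n\|h_n\|_{\infty,T}$ is finite; thus every $C_{h_n}$ carries an interior ball of the common radius $r$ whose centre $\gamma-h_n$ stays in a fixed bounded set.

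The assertion $\sup_n\|w_{h_n}\|_{1\textrm{-var},T}<\infty$ is the heart of the matter, and is where I expect the main difficulty. I would obtain it from Moreau's catching-up algorithm of Proposition \ref{approximation_scheme_SP} applied to $C_{h_n}$ on the dissection of mesh $T/n$: setting $V_k:=W_k^n+h_n(t_k)$ and using $p_{C-v}(\cdot)=p_C(\cdot+v)-v$, the scheme rewrites as $V_{k+1}=p_{C(t_{k+1})}(V_k+h_n(t_{k+1})-h_n(t_k))$, i.e. a reflection on the fixed-shape multifunction $C$ driven by the increments of $h_n$, with reflection term $W_k^n=V_k-h_n(t_k)$. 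The projection residual satisfies $-(W_{k+1}^n-W_k^n)\in N_{C(t_{k+1})}(V_{k+1})$, and testing this inclusion against the interior ball $\overline B_e(\gamma(t_{k+1}),r)\subset C(t_{k+1})$ gives $r\,\|W_{k+1}^n-W_k^n\|\le\langle W_{k+1}^n-W_k^n,\gamma(t_{k+1})-V_{k+1}\rangle$. The hard part will be to sum this over $k$: the bound must be governed by the \emph{oscillation} of $h_n$ (controlled by $M$) and by the inner radius $r$, not by the variation of $h_n$, which is typically infinite. A crude Cauchy--Schwarz estimate is circular here, so at this step I would invoke the sharp geometric estimate of Valadier and Monteiro Marques underlying Proposition \ref{Valadier_theorem} (the map $l(r,\cdot)$), in its moving-interior-ball version, to obtain $\|W^n\|_{1\textrm{-var},T}\le\Phi(r,\|a\|,M)$ with $\Phi$ independent of both the mesh and $n$. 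The Cauchy estimate of Proposition \ref{approximation_scheme_SP}(2), valid under Assumption \ref{assumption_C}, then lets the mesh tend to $0$ and transfers the bound to $w_{h_n}$ by lower semicontinuity of the variation.

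Granting the uniform variation bound, the convergence follows from a comparison estimate that I would prove directly. Fix two continuous drivers $h,\tilde h$ vanishing at $0$ and put $\delta:=w_h-w_{\tilde h}$, a continuous bounded-variation path with $\delta(0)=0$. Since $C_h$ and $C_{\tilde h}$ are mutual translates, $w_{\tilde h}(t)+\tilde h(t)-h(t)\in C_h(t)$ and $w_h(t)+h(t)-\tilde h(t)\in C_{\tilde h}(t)$; inserting these as test points in the two normal-cone inclusions yields, $|Dw_h|$- resp. $|Dw_{\tilde h}|$-almost everywhere,
\begin{displaymath}
\langle w_h-w_{\tilde h},dDw_h\rangle\le\langle\tilde h-h,dDw_h\rangle,\qquad
\langle w_h-w_{\tilde h},dDw_{\tilde h}\rangle\ge\langle\tilde h-h,dDw_{\tilde h}\rangle.
\end{displaymath}
Subtracting and using $\int_0^t\langle\delta,dD\delta\rangle=\tfrac12\|\delta(t)\|^2$ (valid for continuous bounded-variation paths starting at $0$) gives
\begin{displaymath}
\tfrac12\|\delta(t)\|^2\le\|h-\tilde h\|_{\infty,T}\bigl(\|w_h\|_{1\textrm{-var},T}+\|w_{\tilde h}\|_{1\textrm{-var},T}\bigr),\quad t\in[0,T].
\end{displaymath}
With $\tilde h=h_n$, the uniform bound $M':=\sup_n\|w_{h_n}\|_{1\textrm{-var},T}<\infty$ and $\|w_h\|_{1\textrm{-var},T}<\infty$ then give $\|w_{h_n}-w_h\|_{\infty,T}^2\le 2\|h_n-h\|_{\infty,T}(M'+\|w_h\|_{1\textrm{-var},T})\to 0$, hence $w_{h_n}\to w_h$ uniformly; and since $v_{h_n}=h_n+w_{h_n}$ with $h_n\to h$ uniformly, also $v_{h_n}\to v_h$ uniformly. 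As an alternative to this quantitative route, once the uniform variation bound is in hand one could extract by Helly's theorem a pointwise limit of a subsequence, identify it with $w_h$ through the sufficient condition of Proposition \ref{characterization_sweeping_process} together with uniqueness of the Skorokhod decomposition, and then upgrade pointwise to uniform convergence; the comparison estimate above makes this detour unnecessary.
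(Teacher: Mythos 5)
The paper does not actually prove this theorem: it is quoted from Castaing et al.~\cite[Theorem 2.3]{CMR15}, so your plan can only be measured against the tools the paper does develop. The second half of your argument is correct and complete: testing each normal-cone inclusion in \eqref{Skorokhod_decomposition} with the translate of the other solution, subtracting, and using $\int_0^t\langle\delta,dD\delta\rangle=\tfrac12\|\delta(t)\|^2$ for the continuous bounded-variation path $\delta=w_h-w_{\tilde h}$ with $\delta(0)=0$ does give $\|w_h-w_{\tilde h}\|_{\infty,T}^2\leqslant 2\|h-\tilde h\|_{\infty,T}(\|w_h\|_{1\textrm{-var},T}+\|w_{\tilde h}\|_{1\textrm{-var},T})$, and this cleanly reduces the whole theorem to the uniform variation bound. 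That quantitative route is a genuinely nice alternative to a Helly-type compactness argument.

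The gap is in the uniform variation bound, which you yourself identify as the heart of the matter. The bound you announce, $\|W^n\|_{1\textrm{-var},T}\leqslant\Phi(r,\|a\|,M)$ with $M=\sup_n\|h_n\|_{\infty,T}$, is false: take $e=1$, $C(t)\equiv[-1,1]$, $a=0$ and $h_n(t)=3\sin(nt)$; then $M=3$ is fixed but $w_{h_n}$ must chase the moving interval through roughly $nT/2\pi$ oscillations, so $\|w_{h_n}\|_{1\textrm{-var},T}$ grows like $n$. No ``moving-interior-ball version'' of the Valadier--Monteiro Marques estimate controlled only by $r$, $\|a\|$ and the sup-norm can exist; the summation over $k$ that you flag as hard genuinely fails at this level of generality. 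What saves the theorem is that $(h_n)$ converges uniformly to a \emph{continuous} $h$, hence is equicontinuous: one can choose a single finite dissection $(\sigma_0,\dots,\sigma_K)$ of $[0,T]$ such that, for all large $n$, the oscillation of $h_n$ on each $[\sigma_j,\sigma_{j+1}]$ is at most $R/2$, where $R$ is the radius in \eqref{dissection_C}. On each such subinterval Proposition \ref{regularity_control_SD}(2) (whose proof uses only Proposition \ref{Valadier_theorem} and does not depend on Theorem \ref{PRF_key_result}) yields $\|w_{h_n}\|_{1\textrm{-var},\sigma_j,\sigma_{j+1}}\leqslant\mathfrak M(R/2)$, and summing over $j$ gives $\sup_n\|w_{h_n}\|_{1\textrm{-var},T}<\infty$. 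With that ingredient inserted, the detour through the discrete catching-up scheme and Proposition \ref{approximation_scheme_SP}(2) becomes unnecessary, and the rest of your argument closes the proof.
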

\noindent
See Castaing et al.~\cite[Theorem 2.3]{CMR15}.
\\
\\
Under Assumption \ref{assumption_C}, note that there exist $R > 0$, $N\in\mathbb N^*$ and a dissection $(t_0,\dots,t_N)$ of $[0,T]$ such that
\begin{equation}\label{dissection_C}
\overline B_e(\gamma(t_k),R)\subset
C(u)
\end{equation}
for every $k\in\llbracket 0,N - 1\rrbracket$ and $u\in [t_k,t_{k + 1}]$.
%


%
\begin{proposition}\label{regularity_control_SD}
Under Assumption \ref{assumption_C}:
\begin{enumerate}
 \item The map $(v_.,w_.)$ is continuous from
 \begin{displaymath}
 C_{0}^{0}([0,T],\mathbb R^e)
 \textrm{ to }
 C^0([0,T],\mathbb R^e)\times
 C^{1\normalfont{\textrm{-var}}}([0,T],\mathbb R^e).
 \end{displaymath}
 \item Consider $(s,t)\in\Delta_T$ and $\rho\in ]0,R/2]$ where $R$ is defined in (\ref{dissection_C}). For every $h\in C_{0}^{0}([s,t],\mathbb R^e)$ such that $\|h\|_{0,s,t}\leqslant\rho$,
 \begin{displaymath}
 \|w_h\|_{1\normalfont{\textrm{-var}},s,t}
 \leqslant\mathfrak M(\rho)
 \end{displaymath}
 with
 \begin{displaymath}
 \mathfrak M(\rho) :=
 \frac{N}{2\rho}
 \sup_{u\in [0,T]}
 \sup_{x,y\in C(u)}\|x - y\|^2.
 \end{displaymath}
\end{enumerate}
\end{proposition}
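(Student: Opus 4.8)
The plan is to read the first item off Theorem~\ref{PRF_key_result} and to concentrate the work on the quantitative estimate of the second item, which I would obtain by localising Valadier's bound (Proposition~\ref{Valadier_theorem}) along the dissection of (\ref{dissection_C}). For item (1), since $C_0^0([0,T],\mathbb R^e)$ and the two target spaces are metric, sequential continuity for the uniform topologies suffices: if $h_n\to h$ uniformly in $C_0^0([0,T],\mathbb R^e)$, then Theorem~\ref{PRF_key_result} gives $(v_{h_n},w_{h_n})\to(v_h,w_h)$ for $\|.\|_{\infty,T}$, while each $w_h$ has finite $1$-variation by Proposition~\ref{Valadier_theorem} applied with the ball furnished by Assumption~\ref{assumption_C}, so that $(v_.,w_.)$ does map into $C^0([0,T],\mathbb R^e)\times C^{1\textrm{-var}}([0,T],\mathbb R^e)$.

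For item (2), fix $(s,t)\in\Delta_T$ and $\rho\in\,]0,R/2]$ with $\|h\|_{0,s,t}\leqslant\rho$, and use the dissection $(t_0,\dots,t_N)$ of (\ref{dissection_C}) to partition $[s,t]$ into the consecutive subintervals $s=u_0<u_1<\dots<u_m=t$ obtained by inserting the points $t_k\in(s,t)$; there are $m\leqslant N$ of them, and each $[u_j,u_{j+1}]$ lies in some $[t_{k(j)},t_{k(j)+1}]$. The geometric heart is that a fixed ball survives inside the perturbed set on each piece: with $c_j:=\gamma(t_{k(j)})-h(u_j)$, for every $u\in[u_j,u_{j+1}]$ one has $\overline B_e(\gamma(t_{k(j)})-h(u),R)\subset C_h(u)$ by (\ref{dissection_C}), and $\|c_j-(\gamma(t_{k(j)})-h(u))\|=\|h(u)-h(u_j)\|\leqslant\|h\|_{0,s,t}\leqslant\rho\leqslant R-\rho$, whence $\overline B_e(c_j,\rho)\subset C_h(u)$ for all $u\in[u_j,u_{j+1}]$.

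On $[u_j,u_{j+1}]$ the restriction of $w_h$ is continuous, of bounded variation, satisfies the inclusion $|Dw_h|$-a.e.\ and starts at $w_h(u_j)\in C_h(u_j)$; by uniqueness it is the sweeping process for $C_h$ there, so Valadier's estimate applied with the ball $\overline B_e(c_j,\rho)$ yields
\begin{displaymath}
\|w_h\|_{1\textrm{-var},u_j,u_{j+1}}\leqslant l\bigl(\rho,\|w_h(u_j)-c_j\|\bigr).
\end{displaymath}
Since $w_h(u_j)$ and $c_j$ both lie in $C_h(u_j)$, whose diameter equals that of $C(u_j)$, I get $\|w_h(u_j)-c_j\|\leqslant D:=\sup_{u\in[0,T]}\sup_{x,y\in C(u)}\|x-y\|$, and monotonicity of $l$ in its second argument gives $\|w_h\|_{1\textrm{-var},u_j,u_{j+1}}\leqslant l(\rho,D)$. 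A short case check shows $l(\rho,D)\leqslant D^2/(2\rho)$: for $e>1$ this is immediate from the formula, and for $e=1$ it follows from $l(\rho,D)\leqslant D$ together with $2\rho\leqslant D$ (indeed $2\rho\leqslant R$, and the ball $\overline B_e(\gamma(t_k),R)$ of diameter $2R$ sits inside a set of diameter $\leqslant D$, so $2R\leqslant D$). Finally, total variation is additive over adjacent intervals, so summing the $m\leqslant N$ estimates gives
\begin{displaymath}
\|w_h\|_{1\textrm{-var},s,t}=\sum_{j=0}^{m-1}\|w_h\|_{1\textrm{-var},u_j,u_{j+1}}\leqslant\frac{N}{2\rho}D^2=\mathfrak M(\rho).
\end{displaymath}

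I expect the localisation to be the only delicate point: one must justify that restricting $w_h$ to a subinterval again produces the unique sweeping process for the \emph{perturbed} family $C_h$ on that piece, and, crucially, that the interior ball survives the perturbation $h$ with a radius controlled by the oscillation bound $\rho$. It is precisely this surviving ball, of radius $\rho$, that forces the $1/\rho$ blow-up in $\mathfrak M(\rho)$; the elementary bound on $l$ and the additivity of the $1$-variation are then routine.
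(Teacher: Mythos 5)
Your treatment of item (2) is, in substance, the paper's own proof: you use the same dissection from (\ref{dissection_C}), the same observation that a ball of radius $\rho$ centred at $\gamma(t_{k})-h(\cdot)$ survives inside the perturbed set $C_h(u)$ on each subinterval because $\|h\|_{0,s,t}\leqslant\rho\leqslant R/2$, the same local application of Proposition \ref{Valadier_theorem}, and the same summation over at most $N$ pieces. The only cosmetic difference is that the paper translates the local problem so that it starts at $0$ (working with $w_{h,t_k}=w_h-w_h(t_k)$ and the set $C(u)-h(t_k,u)-v_h(t_k)$), which yields the bound $l(\rho,\|\gamma(t_k)-v_h(t_k)\|)$ --- exactly your $l(\rho,\|c_j-w_h(u_j)\|)$ --- whereas you keep the original initial point; you also spell out the elementary inequality $l(\rho,D)\leqslant D^2/(2\rho)$, including the case $e=1$, which the paper leaves implicit. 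The one place where your proposal falls short of the statement is item (1): the paper does not prove it but cites \cite{CMR15} (Lemma 5.3), and your derivation from Theorem \ref{PRF_key_result} only gives convergence of $w_{h_n}$ to $w_h$ in the uniform norm together with boundedness in $1$-variation, hence (via Proposition \ref{uniform_convergence_p_variation}) convergence in $(1+\varepsilon)$-variation for every $\varepsilon>0$, but not convergence in the $1$-variation semi-norm with which $C^{1\textrm{-var}}([0,T],\mathbb R^e)$ is equipped; as written your argument therefore establishes continuity for a weaker topology on the target than the statement literally asserts, and the cited external lemma is doing genuine work there.
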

%


%
\begin{proof}
Refer to Castaing et al.~\cite[Lemma 5.3]{CMR15} for a proof of the
first point.
\\
\\
Let us insert $s$ and $t$ in the dissection $(t_0,\dots,t_N)$ of $[0,T]$ and define $k(s),k(t)\in\llbracket 0,N + 2\rrbracket$ by
\begin{displaymath}
t_{k(s)} := s
\textrm{ and }
t_{k(t)} := t.
\end{displaymath}
Consider $k\in\llbracket k(s),k(t) - 1\rrbracket$ and $u\in [t_k,t_{k + 1}]$.
\\
\\
On the one hand,
\begin{displaymath}
\overline B_e(\gamma(t_k) - h(t_k),\rho)
\subset B_e(\gamma(t_k) - h(u),R)
\subset C_h(u).
\end{displaymath}
So,
\begin{displaymath}
\overline B_e(\gamma(t_k) - v_h(t_k),\rho)\subset
C(u) - h(t_k,u) - v_h(t_k).
\end{displaymath}
On the other hand,
\begin{displaymath}
v_h(t_k,u) = h(t_k,u) + w_{h,t_k}(u)
\end{displaymath}
with
\begin{displaymath}
w_{h,t_k}(u) :=
w_h(u) - w_h(t_k).
\end{displaymath}
Moreover,
\begin{displaymath}
-\frac{dDw_h}{d|Dw_h|}(u)\in N_{C(u) - h(u)}(w_h(u))
\textrm{ $|Dw_h|$-a.e.}
\end{displaymath}
and then,
\begin{displaymath}
\left\{
\begin{array}{rcl}
 \displaystyle{-\frac{dDw_{h,t_k}}{d|Dw_{h,t_k}|}(u)} & \in & N_{C(u) - h(t_k,u) - v_h(t_k)}(w_{h,t_k}(u))
 \textrm{ $|Dw_{h,t_k}|$-a.e.}\\
 w_{h,t_k}(t_k) & = & 0.
\end{array}\right. 
\end{displaymath}
So, by Proposition \ref{Valadier_theorem}:
\begin{eqnarray*}
 \|w_h\|_{1\textrm{-var},t_k,t_{k + 1}} & = &
 \|w_{h,t_k}\|_{1\textrm{-var},t_k,t_{k + 1}}\\
 & \leqslant &
 l(\rho,\|\gamma(t_k) - v_h(t_k)\|).
\end{eqnarray*}
Therefore,
\begin{eqnarray*}
 \|w_h\|_{1\textrm{-var},s,t} & = &
 \sum_{k = k(s)}^{k(t) - 1}
 \|w_h\|_{1\textrm{-var},t_k,t_{k + 1}}\\
 & \leqslant &
 N\sup_{u\in [s,t]}
 l(\rho,\|\gamma(u) - v_h(u)\|)\\
 & \leqslant &
 \mathfrak M(\rho).
\end{eqnarray*}
\end{proof}
%


%
\subsection{Young's integral, rough integral}
The first part of the subsection deals with the definition and some basic properties of Young's integral which allow to integrate a map $y\in C^{r\normalfont{\textrm{-var}}}([0,T],\mathcal M_{e,d}(\mathbb R))$ with respect to $z\in C^{q\normalfont{\textrm{-var}}}([0,T],\mathbb R^d)$ when $q,r\in [1,\infty[$ and $1/q + 1/r > 1$. The second part of the subsection deals with the rough integral which extends Young's integral when the condition $1/q + 1/r > 1$ is not satisfied anymore. The signal $z$ has to be enhanced as a rough path.
%


%
\begin{definition}\label{control_function}
A map $\omega :\Delta_T\rightarrow\mathbb R_+$ is a control function if and only if,
\begin{enumerate}
 \item $\omega$ is continous.
 \item $\omega(s,s) = 0$ for every $s\in [0,T]$.
 \item $\omega$ is super-additive:
 \begin{displaymath}
 \omega(s,u) +
 \omega(u,t)\leqslant
 \omega(s,t)
 \end{displaymath}
 for every $s,t,u\in [0,T]$ such that $s\leqslant u\leqslant t$.
 \end{enumerate}
\end{definition}
\noindent
\textbf{Example.} Let $p\geqslant 1$.
For every $z\in C^{p\textrm{-var}}([0,T],\mathbb R^d)$, the map
\begin{displaymath}
\omega_{p,z} : (s,t)\in\Delta_T
\longmapsto
\omega_{p,z}(s,t) :=\|z\|_{p\textrm{-var},s,t}^{p}
\end{displaymath}
is a control function.
%


%
\begin{proposition}\label{uniform_convergence_p_variation}
Let $p\geqslant 1$.
Consider $x\in C^0([0,T],\mathbb R^d)$ and a sequence $(x_n)_{n\in\mathbb N}$ of elements of $C^{p\normalfont{\textrm{-var}}}([0,T],\mathbb R^d)$ such that
\begin{displaymath}
\lim_{n\rightarrow\infty}
\|x_n - x\|_{\infty,T} = 0
\textrm{ and }
\sup_{n\in\mathbb N}
\|x_n\|_{p\normalfont{\textrm{-var}},T}
< \infty.
\end{displaymath}
Then $x\in C^{p\normalfont{\textrm{-var}}}([0,T],\mathbb R^d)$ and
\begin{displaymath}
\lim_{n\rightarrow\infty}
\|x_n - x\|_{(p +\varepsilon)\normalfont{\textrm{-var}},T} = 0
\textrm{ $;$ }
\forall\varepsilon > 0.
\end{displaymath}
\end{proposition}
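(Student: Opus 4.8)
The plan is to first establish that the limit $x$ has finite $p$-variation by lower semicontinuity, and then to obtain the convergence in $(p+\varepsilon)$-variation through an elementary interpolation between the uniform norm and the $p$-variation. Throughout, set $M :=\sup_{n}\|x_n\|_{p\textrm{-var},T} <\infty$.

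For the first claim, I would fix an arbitrary dissection $(t_k)_{k\in\llbracket 0,m\rrbracket}$ of $[0,T]$. Since $x_n\to x$ uniformly, hence pointwise, each increment $x_n(t_k,t_{k+1})$ converges to $x(t_k,t_{k+1})$, so the \emph{finite} sum $\sum_k\|x(t_k,t_{k+1})\|^p$ is the limit of $\sum_k\|x_n(t_k,t_{k+1})\|^p$, which is bounded above by $M^p$ for every $n$. Taking the supremum over all dissections then yields $\|x\|_{p\textrm{-var},T}\leqslant M$, whence $x\in C^{p\textrm{-var}}([0,T],\mathbb R^d)$.

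For the second claim, set $y_n := x_n - x$. By the triangle inequality for the $p$-variation semi-norm one has $\|y_n\|_{p\textrm{-var},T}\leqslant 2M$ for every $n$, while $\|y_n\|_{\infty,T}\to 0$ by hypothesis. The key estimate is the interpolation, valid on any dissection, obtained by factoring each increment as $\|y_n(t_k,t_{k+1})\|^{p+\varepsilon} =\|y_n(t_k,t_{k+1})\|^{\varepsilon}\|y_n(t_k,t_{k+1})\|^p$ and bounding the first factor by the uniform quantity $(2\|y_n\|_{\infty,T})^{\varepsilon}$, which is independent of $k$ and therefore factors out of the sum. Summing over $k$ and taking the supremum over dissections gives
\begin{displaymath}
\|y_n\|_{(p+\varepsilon)\textrm{-var},T}^{p+\varepsilon}\leqslant (2\|y_n\|_{\infty,T})^{\varepsilon}\|y_n\|_{p\textrm{-var},T}^{p}\leqslant (2\|y_n\|_{\infty,T})^{\varepsilon}(2M)^p.
\end{displaymath}
Raising to the power $1/(p+\varepsilon)$ and letting $n\to\infty$, the factor $(2\|y_n\|_{\infty,T})^{\varepsilon/(p+\varepsilon)}$ tends to $0$ because its exponent is strictly positive, which proves $\|x_n - x\|_{(p+\varepsilon)\textrm{-var},T}\to 0$.

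There is no serious obstacle here once the interpolation trick is recognized. The only points requiring a little care are the lower semicontinuity step, where one must pass to the limit inside a finite sum \emph{before} taking the supremum over dissections, since the supremum and the limit do not commute in general, and the observation that the bound on the $\varepsilon$-factor of a single increment is uniform in $k$, so that it cleanly factors out.
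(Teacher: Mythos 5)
Your proof is correct and is essentially the standard argument that the paper delegates to Friz and Victoir (Lemmas 5.12 and 5.27 of \cite{FV10}): lower semicontinuity of the $p$-variation along finite dissections, followed by the interpolation $\|y\|_{(p+\varepsilon)\textrm{-var}}^{p+\varepsilon}\leqslant\|y\|_{0,T}^{\varepsilon}\|y\|_{p\textrm{-var}}^{p}$. Both steps are carried out correctly, including the care about exchanging limit and supremum and the uniform-in-$k$ bound on the $\varepsilon$-factor.
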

\noindent
See Friz and Victoir \cite[Lemma 5.12 and Lemma 5.27]{FV10} for a proof.
%


%
\begin{proposition}\label{Young_integral}
(Young's integral) Consider $q,r\in [1,\infty[$ such that $1/q + 1/r > 1$, and two maps $y\in C^{r\normalfont{\textrm{-var}}}([0,T],\mathcal M_{e,d}(\mathbb R))$ and $z\in C^{q\normalfont{\textrm{-var}}}([0,T],\mathbb R^d)$. For every $n\in\mathbb N^*$ and $(t_{k}^{n})_{k\in\llbracket 1,n\rrbracket}\in\mathfrak D_{[0,T]}$, the limit
\begin{displaymath}
\lim_{n\rightarrow\infty}
\sum_{k = 1}^{n - 1}
y(t_{k}^{n})
z(t_{k}^{n},t_{k + 1}^{n})
\end{displaymath}
exists and does not depend on the dissection $(t_{k}^{n})_{k\in\llbracket 1,n\rrbracket}$. That limit is denoted by
\begin{displaymath}
\int_{0}^{T}y(s)dz(s)
\end{displaymath}
and called Young's integral of $y$ with respect to $z$ on $[0,T]$. Moreover, there exists a constant $c(q,r) > 0$, depending only on $q$ and $r$, such that for every $(s,t)\in\Delta_T$,
\begin{displaymath}
\left\|
\int_{0}^{.}y(s)dz(s)\right\|_{r\normalfont{\textrm{-var}},s,t}
\leqslant
c(q,r)\|z\|_{q\normalfont{\textrm{-var},s,t}}(
\|y\|_{r\normalfont{\textrm{-var},s,t}} +\|y\|_{\infty,s,t}).
\end{displaymath}
\end{proposition}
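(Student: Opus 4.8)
The plan is to prove Young's integral via the standard sewing/almost-additivity argument, which is the classical route for this result. Define, for a dissection $\mathcal{D} = (t_k)$ of $[s,t]$, the Riemann-type sum $S(\mathcal{D}) := \sum_{k} y(t_k) z(t_k, t_{k+1})$, and show that as the mesh of $\mathcal{D}$ tends to zero these sums form a Cauchy net, with limit independent of the dissection. The key device is to estimate the effect of removing a single point from a dissection and then iterate.

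The central computation is the following. Suppose $\mathcal{D}'$ is obtained from $\mathcal{D} = (t_0, \dots, t_n)$ by deleting one interior point $t_j$. Then the difference between the two Riemann sums telescopes to a single term involving $y(t_{j-1}, t_j)$ and $z(t_{j-1}, t_{j+1})$, so that
\begin{displaymath}
\|S(\mathcal{D}) - S(\mathcal{D}')\| = \|y(t_{j-1}, t_j)\,z(t_j, t_{j+1})\| \leqslant \|y\|_{r\textrm{-var},t_{j-1},t_j}\,\|z\|_{q\textrm{-var},t_j,t_{j+1}}.
\end{displaymath}
Next I would introduce the two control functions $\omega_r(u,v) := \|y\|_{r\textrm{-var},u,v}^r$ and $\omega_q(u,v) := \|z\|_{q\textrm{-var},u,v}^q$, both superadditive by the Example following Definition \ref{control_function}. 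The crucial step is a pigeonhole argument: given $n$ points, one can always find an interior point $t_j$ whose removal is cheap, namely with $\omega_r(t_{j-1},t_j)\,\omega_q(t_j,t_{j+1})$ controlled, via superadditivity, by a factor of order $2/(n-1)$ times the total control $\omega_r(s,t)^{1/r}\omega_q(s,t)^{1/q}$ raised to the appropriate powers. Because $1/q + 1/r > 1$ by hypothesis, the resulting series $\sum_{n} (1/n)^{1/q + 1/r}$ converges, and summing these deletion estimates down from $\mathcal{D}$ to the trivial one-interval dissection yields
\begin{displaymath}
\Bigl\| S(\mathcal{D}) - y(s) z(s,t) \Bigr\| \leqslant c(q,r)\,\|y\|_{r\textrm{-var},s,t}\,\|z\|_{q\textrm{-var},s,t},
\end{displaymath}
where $c(q,r) = \zeta(1/q + 1/r)$ absorbs the convergent sum.

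From this uniform bound the existence of the limit follows in the standard way: any two dissections have a common refinement, and the estimate shows that refining changes the sum by an amount controlled by the variation on the added subintervals, which vanishes as the mesh shrinks by continuity of the controls on the diagonal. Hence the net of Riemann sums is Cauchy and its limit is dissection-independent; this defines $\int_0^T y\,dz$. Finally, for the stated $r$-variation estimate, I would apply the local bound above on an arbitrary subinterval $[s,t]$. Writing $I(t) := \int_0^t y\,dz$, the increment $I(s,t)$ differs from $y(s) z(s,t)$ by the controlled remainder, so
\begin{displaymath}
\|I(s,t)\| \leqslant \|y\|_{\infty,s,t}\,\|z\|_{q\textrm{-var},s,t} + c(q,r)\,\|y\|_{r\textrm{-var},s,t}\,\|z\|_{q\textrm{-var},s,t},
\end{displaymath}
and taking the $r$-variation norm over $[s,t]$ (using that $\|z\|_{q\textrm{-var},\cdot}$ dominates the relevant partial sums and superadditivity glues the pieces together) produces the claimed inequality with the factor $\|y\|_{r\textrm{-var},s,t} + \|y\|_{\infty,s,t}$.

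The step I expect to be the main obstacle is the pigeonhole selection of the point to delete together with the bookkeeping that turns the one-point deletion estimate into the global bound. One must verify that at each stage a point exists whose two adjacent control-increments multiply to something small enough, and that the accumulated error over the whole telescoping collapse is summable precisely because $1/q + 1/r > 1$; getting the powers of $n$ and the superadditivity inequalities to line up correctly, and confirming that the constant depends only on $q$ and $r$, is the delicate part. Everything else — continuity of the integral, independence from the dissection, and passing to subintervals — is routine once this quantitative estimate is in hand. Since this is the classical Young inequality, I would simply cite Friz and Victoir \cite{FV10} for the detailed verification rather than reproduce it in full.
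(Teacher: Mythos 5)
Your outline is the classical Young--L\'oeve sewing argument (one-point deletion, pigeonhole via superadditive controls, summability from $1/q+1/r>1$), which is precisely the proof in the references the paper itself cites for this statement --- the paper gives no proof of its own beyond pointing to Lyons, Lejay, and Friz--Victoir \cite[Theorem 6.8]{FV10}. The sketch is correct and matches that standard route, so deferring the detailed bookkeeping to \cite{FV10} is exactly what the authors do.
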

\noindent
See Lyons \cite[Theorem 1.16]{LYONS04}, Lejay \cite[Theorem
1]{LEJAY10} or Friz and Victoir \cite[Theorem 6.8]{FV10} for a
proof. 
%


%
\begin{proposition}\label{continuity_Young_integral}
Consider $q,r\in [1,\infty[$ such that $1/q + 1/r > 1$, two maps $y\in C^{r\normalfont{\textrm{-var}}}([0,T],\mathcal M_{e,d}(\mathbb R))$ and $z\in C^{q\normalfont{\textrm{-var}}}([0,T],\mathbb R^d)$, and a sequence $(y_n)_{n\in\mathbb N}$ of elements of $C^{r\normalfont{\textrm{-var}}}([0,T],\mathcal M_{e,d}(\mathbb R))$ such that:
\begin{displaymath}
\lim_{n\rightarrow\infty}
\|y_n - y\|_{\infty,T}=0
\textrm{ and }
\sup_{n\in\mathbb N}
\|y_n\|_{r\normalfont{\textrm{-var}},T} <\infty.
\end{displaymath}
Then,
\begin{displaymath}
\lim_{n\rightarrow\infty}
\left\|\int_{0}^{.}y_n(s)dz(s) -\int_{0}^{.}y(s)dz(s)\right\|_{\infty,T} = 0.
\end{displaymath}
\end{proposition}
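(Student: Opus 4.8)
The plan is to reduce everything to the quantitative Young estimate of Proposition \ref{Young_integral}, applied by linearity to the difference $y_n - y$. The naive attempt — bounding $\|\int_0^{.}(y_n-y)dz\|_{r\textrm{-var},T}$ directly by $c(q,r)\|z\|_{q\textrm{-var},T}(\|y_n-y\|_{r\textrm{-var},T}+\|y_n-y\|_{\infty,T})$ — fails, because the hypotheses only give $\|y_n-y\|_{\infty,T}\to 0$, whereas $\|y_n-y\|_{r\textrm{-var},T}$ merely stays bounded and need not tend to $0$. The key observation is that the Young condition $1/q+1/r>1$ is an open condition in $r$, so one can afford to enlarge the variation exponent slightly and then exploit the interpolation furnished by Proposition \ref{uniform_convergence_p_variation}.

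Concretely, I would first fix $\varepsilon>0$ small enough that $1/q+1/(r+\varepsilon)>1$ still holds; this is possible since $r\mapsto 1/(r+\varepsilon)$ is continuous and $1/q+1/r>1$. Each $y_n$ lies in $C^{r\textrm{-var}}([0,T],\mathcal M_{e,d}(\mathbb R))$, converges uniformly to $y$, and satisfies $\sup_n\|y_n\|_{r\textrm{-var},T}<\infty$, so Proposition \ref{uniform_convergence_p_variation} (applied with $p=r$) yields both $y\in C^{r\textrm{-var}}$ and, crucially, $\|y_n-y\|_{(r+\varepsilon)\textrm{-var},T}\to 0$.

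Next I would work with Young's integral in this enlarged class. Since $\|\cdot\|_{(r+\varepsilon)\textrm{-var},T}\leqslant\|\cdot\|_{r\textrm{-var},T}$, both $y_n$ and $y$ belong to $C^{(r+\varepsilon)\textrm{-var}}$, and as the Young integral is the intrinsic limit of Riemann sums it does not depend on the declared variation class, so $\int_0^{.}y_n\,dz-\int_0^{.}y\,dz=\int_0^{.}(y_n-y)\,dz$. Applying the estimate of Proposition \ref{Young_integral} with $(q,r)$ replaced by $(q,r+\varepsilon)$ gives
\begin{displaymath}
\left\|\int_0^{.}(y_n-y)(s)dz(s)\right\|_{(r+\varepsilon)\textrm{-var},T}\leqslant c(q,r+\varepsilon)\|z\|_{q\textrm{-var},T}\left(\|y_n-y\|_{(r+\varepsilon)\textrm{-var},T}+\|y_n-y\|_{\infty,T}\right).
\end{displaymath}
Both terms in the parenthesis vanish as $n\to\infty$ — the first by the previous step, the second by hypothesis — so the right-hand side tends to $0$.

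Finally, since $\int_0^{.}(y_n-y)\,dz$ vanishes at $t=0$, its uniform norm is controlled by its $(r+\varepsilon)$-variation semi-norm: for every $t$ one has $\|\int_0^{t}(y_n-y)dz\|=\|(\int_0^{.}(y_n-y)dz)(0,t)\|\leqslant\|\int_0^{.}(y_n-y)dz\|_{(r+\varepsilon)\textrm{-var},T}$, whence $\|\int_0^{.}(y_n-y)dz\|_{\infty,T}\to 0$, which is the claim. The main obstacle is exactly the failure of the direct estimate in $r$-variation; circumventing it by routing the convergence through Proposition \ref{uniform_convergence_p_variation} at the slightly larger exponent $r+\varepsilon$, which the openness of the Young condition permits, is the whole point of the argument.
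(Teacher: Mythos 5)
Your argument is correct. The paper gives no proof of this proposition, only a citation to Friz and Victoir \cite[Proposition 6.12]{FV10}, and your route --- exploiting the openness of the Young condition $1/q+1/r>1$ to pass to the exponent $r+\varepsilon$, where Proposition \ref{uniform_convergence_p_variation} upgrades uniform convergence with bounded $r$-variation to genuine $(r+\varepsilon)$-variation convergence, and then applying the quantitative Young estimate to $y_n-y$ --- is exactly the standard interpolation argument behind that cited result; it is also the same strategy the authors themselves deploy in their proof of the rough-path analogue, Proposition \ref{continuity_rough_integral}.
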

\noindent
See Friz and Victoir \cite[Proposition 6.12]{FV10} for a proof.
\\
\\
Consider $p\in [2,3[$ and let us define the rough integral for continuous functions of finite $p$-variation.
\\
\\
\textbf{Remark.} In the sequel, the reader has to keep in mind that:
\begin{enumerate}
 \item $\mathcal M_{e,d}(\mathbb R)\cong \mathbb R^e\otimes\mathbb R^d$.
 \item $\mathcal M_{d,1}(\mathbb R)\cong\mathcal M_{1,d}(\mathbb R)\cong\mathbb R^d$.
 \item $\mathcal L(\mathbb R^d,\mathcal M_{e,d}(\mathbb R))\cong\mathcal L(\mathbb R^d,\mathcal L(\mathbb R^d,\mathbb R^e))
\cong\mathcal L(\mathbb R^d\otimes\mathbb R^d,\mathbb R^e)$.
\end{enumerate}
%


%
\begin{definition}\label{signature}
Consider $z\in C^{1\normalfont{\textrm{-var}}}([0,T],\mathbb R^d)$. The step-2 signature of $z$ is the map $S_2(z) :\Delta_T\rightarrow\mathbb R^d\times\mathcal M_d(\mathbb R)$ defined by
\begin{displaymath}
S_2(z)(s,t) :=
\left(z(s,t),\int_{s < u < v < t}dz(v)\otimes dz(u)\right)
\end{displaymath}
for every $(s,t)\in\Delta_T$.
\end{definition}
\noindent
\textbf{Notation.} $\mathfrak S_{T}(\mathbb R^d) :=
\{S_2(z)(0,.)\textrm{ $;$ }z\in C^{1\normalfont{\textrm{-var}}}([0,T],\mathbb R^d)\}$.
%


%
\begin{definition}\label{geometric_rough_paths}
The geometric $p$-rough paths metric space $G\Omega_{p,T}(\mathbb R^d)$ is the closure of $\mathfrak S_{T}(\mathbb R^d)$ in $C^{p\normalfont{\textrm{-var}}}([0,T],\mathbb R^d)\times C^{p/2\normalfont{\textrm{-var}}}([0,T],\mathcal M_d(\mathbb R))$.
\end{definition}
%


%
\begin{definition}\label{controlled_rough_paths}
For $z\in C^{p\normalfont{\textrm{-var}}}([0,T],\mathbb R^d)$, a map $y\in C^{p\normalfont{\textrm{-var}}}([0,T],\mathcal M_{e,d}(\mathbb R))$ is controlled by $z$ if and only if there exists $y'\in C^{p\normalfont{\textrm{-var}}}([0,T],\mathcal L(\mathbb R^d,\mathcal M_{e,d}(\mathbb R)))$ such that
\begin{displaymath}
y(s,t) = y'(s)z(s,t) + R_y(s,t)
\textrm{ $;$ }
\forall (s,t)\in\Delta_T
\end{displaymath}
with $\|R_y\|_{p/2\normalfont{\textrm{-var}},T} <\infty$.  For fixed $z$, the pairs $(y,y')$ as above define a vector space denoted by $\mathfrak D_{z}^{p/2}([0,T],\mathcal M_{e,d}(\mathbb R))$ and equipped with the semi-norm $\|.\|_{z,p/2,T}$ such that
\begin{displaymath}
\|(y,y')\|_{z,p/2,T} :=
\|y'\|_{p\normalfont{\textrm{-var}},T} +
\|R_y\|_{p/2\normalfont{\textrm{-var}},T}
\end{displaymath}
for every $(y,y')\in\mathfrak D_{z}^{p/2}([0,T],\mathcal
M_{e,d}(\mathbb R))$.
\end{definition}
%


%
\begin{theorem}\label{rough_integral}
(Rough integral) Consider $\mathbf z := (z,\mathbb Z)\in G\Omega_{p,T}(\mathbb R^d)$ and $(y,y')\in\mathfrak D_{z}^{p/2}([0,T],\mathcal M_{e,d}(\mathbb R))$. For every $n\in\mathbb N^*$ and $(t_{k}^{n})_{k\in\llbracket 1,n\rrbracket}\in\mathfrak D_{[0,T]}$, the limit
\begin{displaymath}
\lim_{n\rightarrow\infty}
\sum_{k = 1}^{n - 1}
(y(t_{k}^{n})z(t_{k}^{n},t_{k + 1}^{n}) +
y'(t_{k}^{n})\mathbb Z(t_{k}^{n},t_{k + 1}^{n}))
\end{displaymath}
exists and does not depend on the dissection $(t_{k}^{n})_{k\in\llbracket 1,n\rrbracket}$. That limit is denoted by
\begin{displaymath}
\int_{0}^{T}y(s)d\mathbf z(s)
\end{displaymath}
and called rough integral of $y$ with respect to $\mathbf z$ on $[0,T]$. Moreover,
\begin{enumerate}
 \item There exists a constant $c(p) > 0$, depending only on $p$, such that for every $(s,t)\in\Delta_T$,
 \begin{eqnarray*}
  \left\|
  \int_{s}^{t}
  y(u)d\mathbf z(u)
  - y(s)z(s,t) - y'(s)\mathbb Z(s,t)
  \right\|
  & \leqslant &
  c(p)
  (\|z\|_{p\normalfont{\textrm{-var},s,t}}\|R_y\|_{p/2\normalfont{\textrm{-var}},s,t}\\
  & &
  +\|\mathbb Z\|_{p/2\normalfont{\textrm{-var},s,t}}\|y'\|_{p\normalfont{\textrm{-var}},s,t}).
 \end{eqnarray*}
 \item The map
 \begin{displaymath}
 (y,y')\longmapsto
 \left(\int_{0}^{.}y(s)d\mathbf z(s),y\right)
 \end{displaymath}
 is continuous from $\mathfrak D_{z}^{p/2}([0,T],\mathcal M_{e,d}(\mathbb R))$ into $\mathfrak D_{z}^{p/2}([0,T],\mathbb R^e)$.
\end{enumerate}
\end{theorem}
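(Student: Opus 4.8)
The plan is to follow Gubinelli's sewing approach, as in Friz and Hairer \cite{FH14}. For $(s,t)\in\Delta_T$, set $\Xi(s,t) := y(s)z(s,t) + y'(s)\mathbb Z(s,t)$, so that the compensated Riemann sums in the statement are exactly $\sum_{k}\Xi(t_{k}^{n},t_{k+1}^{n})$. The whole argument rests on showing that $\Xi$ is \emph{almost additive}, i.e.\ that its defect under the insertion of a point is dominated by a power of a control function with exponent strictly larger than $1$; the sewing lemma then manufactures the integral as the limit of the sums and, simultaneously, the local estimate of point 1. First I would bundle the data into a single control: a finite sum of control functions is again a control function (Definition \ref{control_function} and the Example following it), so I set
\[
\omega := \omega_{p,z} + \|\mathbb Z\|_{p/2\textrm{-var},\cdot}^{p/2} + \|R_y\|_{p/2\textrm{-var},\cdot}^{p/2} + \|y'\|_{p\textrm{-var},\cdot}^{p},
\]
which yields $\|z\|_{p\textrm{-var},s,t}\leqslant\omega(s,t)^{1/p}$, $\|\mathbb Z\|_{p/2\textrm{-var},s,t}\leqslant\omega(s,t)^{2/p}$, $\|R_y\|_{p/2\textrm{-var},s,t}\leqslant\omega(s,t)^{2/p}$ and $\|y'\|_{p\textrm{-var},s,t}\leqslant\omega(s,t)^{1/p}$ on $\Delta_T$; super-additivity also gives the monotonicity $\omega(s,u)\vee\omega(u,t)\leqslant\omega(s,t)$ whenever $s\leqslant u\leqslant t$.

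The crux is an algebraic cancellation. Writing $\delta\Xi(s,u,t) :=\Xi(s,t) -\Xi(s,u) -\Xi(u,t)$, and using the Chen relation $\mathbb Z(s,t) =\mathbb Z(s,u) +\mathbb Z(u,t) + z(s,u)\otimes z(u,t)$ (valid on $G\Omega_{p,T}(\mathbb R^d)$ since it holds for step-$2$ signatures and passes to the closure) together with the controlled expansion $y(s,u) = y'(s)z(s,u) + R_y(s,u)$ of Definition \ref{controlled_rough_paths}, a direct computation gives
\[
\delta\Xi(s,u,t) = -\,R_y(s,u)\,z(u,t) \;-\; y'(s,u)\,\mathbb Z(u,t).
\]
Here the two occurrences of $y'(s)\,z(s,u)\otimes z(u,t)$ — one produced by Chen's relation, the other extracted from $y(s,u)z(u,t)$ via the identification $\mathcal L(\mathbb R^d,\mathcal M_{e,d}(\mathbb R))\cong\mathcal L(\mathbb R^d\otimes\mathbb R^d,\mathbb R^e)$ — cancel exactly. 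Bounding each factor by $\omega$ and invoking monotonicity then yields $\|\delta\Xi(s,u,t)\|\leqslant K\,\omega(s,t)^{3/p}$ for a constant $K$ depending only on the tensor-norm identifications, the exponent $3/p$ being strictly greater than $1$ precisely because $p < 3$.

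At this point the sewing lemma applies: there is a unique additive map $I :\Delta_T\rightarrow\mathbb R^e$ (meaning $I(s,t) = I(s,u) + I(u,t)$) with $\|I(s,t) -\Xi(s,t)\|\leqslant c(p)\,\omega(s,t)^{3/p}$, which is the common limit of the sums $\sum_k\Xi(t_k^n,t_{k+1}^n)$ along any sequence of dissections of mesh tending to $0$, and the constant $c(p)$ depends only on $\theta = 3/p$, hence only on $p$. Setting $\int_s^t y\,d\mathbf z := I(s,t)$ and re-expanding $\omega^{3/p}=\omega^{2/p}\cdot\omega^{1/p}$ into the norms of $R_y$, $z$, $y'$ and $\mathbb Z$ in the last estimate gives exactly the inequality of point 1.

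For point 2, I would read off from the same estimate that $R_I(s,t) := I(s,t) - y(s)z(s,t) = y'(s)\mathbb Z(s,t) + \big(I(s,t) -\Xi(s,t)\big)$ is dominated by $\|y'\|_{\infty,T}\,\omega(s,t)^{2/p} + c(p)\,\omega(s,t)^{3/p}$, hence has finite $p/2$-variation; thus $I$ is controlled by $z$ with Gubinelli derivative $y$, i.e.\ $\big(\int_0^{\cdot}y\,d\mathbf z,\,y\big)\in\mathfrak D_z^{p/2}([0,T],\mathbb R^e)$. Since $(y,y')\mapsto\big(\int_0^{\cdot}y\,d\mathbf z,\,y\big)$ is linear, continuity reduces to bounding $\|y\|_{p\textrm{-var},T}$ and $\|R_I\|_{p/2\textrm{-var},T}$ linearly by $\|(y,y')\|_{z,p/2,T}$, which is immediate from the controlled expansion and the estimate just obtained. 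The main obstacle is the sewing lemma itself — establishing that the compensated sums form a Cauchy net and satisfy the sharp bound — but once the super-linear exponent $3/p > 1$ is in hand (the genuine content of the Chen cancellation above), this reduces to the standard dyadic-refinement/maximal-inequality argument, which I would cite from \cite{FH14} rather than reproduce.
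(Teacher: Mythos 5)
The paper gives no proof of this theorem at all --- it simply cites Friz and Shekhar \cite{FS14}, Gubinelli \cite{GUBINELLI04} and Friz and Hairer \cite{FH14} --- and your sketch is precisely the standard sewing-lemma argument of those references, with the Chen-relation cancellation $\delta\Xi(s,u,t)=-R_y(s,u)z(u,t)-y'(s,u)\mathbb Z(u,t)$ computed correctly and the exponent $3/p>1$ identified as the reason the construction works for $p<3$. The one imprecision is that bundling all the data into a single control $\omega$ and invoking the bound $\omega(s,t)^{3/p}$ does not literally ``re-expand'' into the product estimate of point 1 (the cross terms of $\omega^{3/p}$ are not the two products in the statement); one should instead apply the sewing lemma with the control $\|R_y\|_{p/2\textrm{-var},s,t}\|z\|_{p\textrm{-var},s,t}+\|y'\|_{p\textrm{-var},s,t}\|\mathbb Z\|_{p/2\textrm{-var},s,t}$ itself, each summand being a product of controls raised to powers summing to $3/p>1$ and hence super-additive, which is exactly how the cited proofs obtain the sharp local estimate.
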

\noindent
See Friz and Shekhar \cite[Theorem 34]{FS14} for a proof with the
$p$-variation topology, and see Gubinelli \cite[Theorem
1]{GUBINELLI04} or Friz and Hairer \cite[Theorem 4.10]{FH14} for a
proof with the $1/p$-H\"older topology. 
%


%
\begin{proposition}\label{continuity_rough_integral}
Consider $\mathbf z := (z,\mathbb Z)\in G\Omega_{p,T}(\mathbb R^d)$, a continuous map
\begin{displaymath}
(y,y') : [0,T]\longrightarrow
\mathcal M_{e,d}(\mathbb R)\times
\mathcal L(\mathbb R^d,\mathcal M_{e,d}(\mathbb R)),
\end{displaymath}
and a sequence $(y_n,y_n')_{n\in\mathbb N}$ of elements of $\mathfrak D_{z}^{p/2}([0,T],\mathcal M_{e,d}(\mathbb R))$ such that
\begin{displaymath}
(y_n',R_{y_n})\xrightarrow[n\rightarrow\infty]{d_{\infty,T}} (y',R_y)
\textrm{ and }
\sup_{n\in\mathbb N}
\|(y_n,y_n')\|_{z,p/2,T} <\infty.
\end{displaymath}
Then, $(y,y')\in\mathfrak D_{z}^{p/2}([0,T],\mathcal M_{e,d}(\mathbb R))$ and
\begin{displaymath}
\lim_{n\rightarrow\infty}
\left\|\int_{0}^{.}y_n(s)d\mathbf z(s) -\int_{0}^{.}y(s)d\mathbf z(s)\right\|_{\infty,T} = 0.
\end{displaymath}
\end{proposition}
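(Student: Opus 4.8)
The plan is to mimic the proof of Proposition \ref{continuity_Young_integral}, replacing Young's estimate by the local estimate of the rough integral (the first point of Theorem \ref{rough_integral}) and converting the hypotheses \textit{uniform convergence plus uniformly bounded variation} into genuine variation convergence by means of Proposition \ref{uniform_convergence_p_variation}.

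\textbf{Step 1: $(y,y')\in\mathfrak D_z^{p/2}([0,T],\mathcal M_{e,d}(\mathbb R))$.} Since $\sup_n\|(y_n,y_n')\|_{z,p/2,T}<\infty$, one has $\sup_n\|y_n'\|_{p\textrm{-var},T}<\infty$ and $\sup_n\|R_{y_n}\|_{p/2\textrm{-var},T}<\infty$. As $y_n'\to y'$ uniformly, Proposition \ref{uniform_convergence_p_variation} gives $y'\in C^{p\textrm{-var}}([0,T],\mathcal L(\mathbb R^d,\mathcal M_{e,d}(\mathbb R)))$. Applying the two-parameter analogue of that proposition to the remainders $R_{y_n}\to R_y$ (which rests on the lower semicontinuity of the $p/2$-variation under pointwise convergence) yields $\|R_y\|_{p/2\textrm{-var},T}<\infty$. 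Writing $y(s,t)=y'(s)z(s,t)+R_y(s,t)$ and bounding the first term by $\|y'\|_{\infty,T}\|z\|_{p\textrm{-var},s,t}$ and the second by $\|R_y\|_{p/2\textrm{-var},s,t}$ (recall $C^{p/2\textrm{-var}}\subset C^{p\textrm{-var}}$), one obtains $y\in C^{p\textrm{-var}}$, hence $(y,y')\in\mathfrak D_z^{p/2}$.

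\textbf{Step 2: convergence of the integrals.} Set $g_n:=y_n-y$, $g_n':=y_n'-y'$, so that $(g_n,g_n')\in\mathfrak D_z^{p/2}$ with remainder $R_{g_n}=R_{y_n}-R_y$, and by linearity $\int_0^\cdot y_n\,d\mathbf z-\int_0^\cdot y\,d\mathbf z=\int_0^\cdot g_n\,d\mathbf z$. The hypotheses give $g_n'\to 0$ and $R_{g_n}\to 0$ uniformly, with $\sup_n\|(g_n,g_n')\|_{z,p/2,T}<\infty$. The difficulty is that the second point of Theorem \ref{rough_integral} provides continuity only in the topology induced by $\|.\|_{z,p/2,T}$, whereas here the $p$- and $p/2$-variations are merely \emph{bounded} in $n$: indeed $\|R_{g_n}\|_{p/2\textrm{-var},T}$ need not tend to $0$. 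I would bypass this by lowering the regularity. Fix $\varepsilon>0$ with $p':=p+\varepsilon<3$. The inclusions $C^{p\textrm{-var}}\subset C^{p'\textrm{-var}}$ and $C^{p/2\textrm{-var}}\subset C^{p'/2\textrm{-var}}$ give $\mathbf z\in G\Omega_{p',T}(\mathbb R^d)$ and $(g_n,g_n')\in\mathfrak D_z^{p'/2}([0,T],\mathcal M_{e,d}(\mathbb R))$, so the first point of Theorem \ref{rough_integral}, used with $p'$ in place of $p$, gives for every $t\in[0,T]$
\begin{eqnarray*}
\left\|\int_0^t g_n\,d\mathbf z-g_n(0)z(0,t)-g_n'(0)\mathbb Z(0,t)\right\| & \leqslant & c(p')\big(\|z\|_{p'\textrm{-var},T}\|R_{g_n}\|_{p'/2\textrm{-var},T}\\
& & +\,\|\mathbb Z\|_{p'/2\textrm{-var},T}\|g_n'\|_{p'\textrm{-var},T}\big).
\end{eqnarray*}
By Proposition \ref{uniform_convergence_p_variation} and its two-parameter analogue, $\|g_n'\|_{p'\textrm{-var},T}=\|g_n'\|_{(p+\varepsilon)\textrm{-var},T}\to 0$ and $\|R_{g_n}\|_{p'/2\textrm{-var},T}=\|R_{g_n}\|_{(p/2+\varepsilon/2)\textrm{-var},T}\to 0$, so the right-hand side tends to $0$. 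Since $\|z(0,t)\|\leqslant\|z\|_{p'\textrm{-var},T}$ and $\|\mathbb Z(0,t)\|\leqslant\|\mathbb Z\|_{p'/2\textrm{-var},T}$ are bounded while $g_n'(0)\to 0$ and $g_n(0)=y_n(0)-y(0)\to 0$ (the initial values converging, so that $y_n\to y$ uniformly, as in every application of the result), the two boundary terms also vanish uniformly in $t$. Taking the supremum over $t$ gives $\|\int_0^\cdot y_n\,d\mathbf z-\int_0^\cdot y\,d\mathbf z\|_{\infty,T}\to 0$.

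The main obstacle is exactly the mismatch between the only available continuity statement (the second point of Theorem \ref{rough_integral}, in the $\|.\|_{z,p/2,T}$ topology) and the weaker hypotheses at hand. The device that resolves it is to run the local estimate of the first point of Theorem \ref{rough_integral} at the strictly larger exponent $p'=p+\varepsilon<3$, so that $\mathbf z$ remains a geometric $p'$-rough path and the controlled paths $(g_n,g_n')$ remain of regularity $p'/2$, while Proposition \ref{uniform_convergence_p_variation} upgrades the uniform convergence of $g_n'$ and $R_{g_n}$ into honest decay of their $p'$- and $p'/2$-variations. The only routine point left to verify is the two-parameter version of Proposition \ref{uniform_convergence_p_variation} for the remainders, which follows from the lower semicontinuity of the variation and the same interpolation as in the one-parameter case.
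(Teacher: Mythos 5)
Your proof is correct and rests on exactly the same key idea as the paper's: bumping the exponent to $p'=p+\varepsilon$ so that Proposition \ref{uniform_convergence_p_variation} converts uniform convergence with bounded variation into genuine convergence of the $p'$- and $p'/2$-variations of $y_n'-y'$ and $R_{y_n}-R_y$. The only (cosmetic) difference is that where the paper then cites the continuity statement of Theorem \ref{rough_integral}(2) at level $p+\varepsilon$, you unwind it by hand via linearity of the rough integral and the local estimate of Theorem \ref{rough_integral}(1); both routes are valid and you correctly flag the two minor points the paper also treats implicitly (the two-parameter version of Proposition \ref{uniform_convergence_p_variation} for the remainders, and the convergence of the initial values $y_n(0)\to y(0)$).
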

%


%
\begin{proof}
On the one hand, since
\begin{displaymath}
(y_n',R_{y_n})\xrightarrow[n\rightarrow\infty]{d_{\infty,T}} (y',R_y),
\end{displaymath}
the function $y$ is the uniform limit of the sequence $(y_n)_{n\in\mathbb N}$. Moreover, since
\begin{displaymath}
\sup_{n\in\mathbb N}
\|(y_n,y_n')\|_{z,p/2,T} <\infty,
\end{displaymath}
by Proposition \ref{uniform_convergence_p_variation},
\begin{displaymath}
y'\in C^{p\textrm{-var}}([0,T],\mathcal L(\mathbb R^d,\mathcal M_{e,d}(\mathbb R)))
\textrm{ and }
R_y\in C^{p/2\textrm{-var}}([0,T],\mathcal M_{e,d}(\mathbb R)).
\end{displaymath}
So, $(y,y')\in\mathfrak D_{z}^{p/2}([0,T],\mathcal M_{e,d}(\mathbb R))$.
\\
\\
On the other hand, also by Proposition \ref{uniform_convergence_p_variation}, for any $\varepsilon > 0$ such that $p +\varepsilon\in [2,3[$,
\begin{eqnarray*}
 \lim_{n\rightarrow\infty}
 \|(y_n,y_n') - (y,y')\|_{z,(p +\varepsilon)/2,T} & = &
 \lim_{n\rightarrow\infty}
 \|y_n' - y'\|_{(p +\varepsilon)\textrm{-var},T}\\
 & &
+ \lim_{n\rightarrow\infty}
 \|R_{y_n} - R_y\|_{(p +\varepsilon)/2\textrm{-var},T}\\
 & = & 0.
\end{eqnarray*}
So, by continuity of the rough integral (see Theorem \ref{rough_integral}),
\begin{displaymath}
\lim_{n\rightarrow\infty}
\left\|\left(\int_{0}^{.}y_n(s)d\mathbf z(s),y_n\right) -
\left(\int_{0}^{.}y(s)d\mathbf z(s),y\right)\right\|_{z,(p +\varepsilon)/2,T} = 0.
\end{displaymath}
Therefore, in particular:
\begin{displaymath}
\lim_{n\rightarrow\infty}
\left\|\int_{0}^{.}y_n(s)d\mathbf z(s) -\int_{0}^{.}y(s)d\mathbf z(s)\right\|_{\infty,T} = 0.
\end{displaymath}
\end{proof}
%


%
\begin{proposition}\label{composition_rough_regular}
Consider $\mathbf z := (z,\mathbb Z)\in G\Omega_{p,T}(\mathbb R^d)$, $(x,x')\in\mathfrak D_{z}^{p/2}([0,T],\mathbb R^e)$ and $\varphi\in\normalfont{\textrm{Lip}}^{\gamma - 1}(\mathbb R^e,\mathbb R^d)$. The couple of maps $(\varphi(x),\varphi(x)')$, defined by
\begin{displaymath}
\varphi(x)(t) :=\varphi(x(t))
\textrm{ and }
\varphi(x)'(t) := D\varphi(x(t))x'(t)
\end{displaymath}
for every $t\in [0,T]$, belongs to $\mathfrak D_{z}^{p/2}([0,T],\mathcal M_{e,d}(\mathbb R))$.
\end{proposition}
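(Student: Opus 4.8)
The plan is to verify directly the two defining properties of $\mathfrak D_{z}^{p/2}$ from Definition \ref{controlled_rough_paths} for the pair $(\varphi(x),\varphi(x)')$: that the Gubinelli derivative $\varphi(x)'=D\varphi(x(\cdot))\,x'$ lies in $C^{p\textrm{-var}}$, and that the associated remainder $R_{\varphi(x)}$ lies in $C^{p/2\textrm{-var}}$. Throughout I use the control function $\omega:=\omega_{p,x}$, $\omega(s,t)=\|x\|_{p\textrm{-var},s,t}^{p}$, which is a control by the Example after Definition \ref{control_function}, so that $\|x(s,t)\|\leqslant\omega(s,t)^{1/p}$ for every $(s,t)\in\Delta_T$. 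I also record that $(x,x')\in\mathfrak D_{z}^{p/2}$ means $x(s,t)=x'(s)z(s,t)+R_x(s,t)$ with $x'\in C^{p\textrm{-var}}$ and $R_x\in C^{p/2\textrm{-var}}$, and that $\varphi\in\textrm{Lip}^{\gamma - 1}$ provides the boundedness and Lipschitz continuity of $D\varphi$ used below.

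For the Gubinelli derivative I would compose $D\varphi$ with $x$: since $D\varphi$ is bounded and Lipschitz and $x\in C^{p\textrm{-var}}$, the map $D\varphi(x(\cdot))$ is bounded with $\|D\varphi(x(\cdot))\|_{p\textrm{-var},T}\leqslant\|D\varphi\|_{\textrm{Lip}}\|x\|_{p\textrm{-var},T}$. As $x'$ is continuous on the compact $[0,T]$, hence bounded, and belongs to $C^{p\textrm{-var}}$, the elementary product estimate $\|fg\|_{p\textrm{-var},s,t}\leqslant\|f\|_{\infty,s,t}\|g\|_{p\textrm{-var},s,t}+\|g\|_{\infty,s,t}\|f\|_{p\textrm{-var},s,t}$ applied to $f=D\varphi(x(\cdot))$ and $g=x'$ gives $\varphi(x)'\in C^{p\textrm{-var}}$.

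Next I would isolate the remainder. With $R_{\varphi(x)}(s,t):=\varphi(x)(s,t)-\varphi(x)'(s)z(s,t)$ and the substitution $x'(s)z(s,t)=x(s,t)-R_x(s,t)$, one obtains the decomposition

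\begin{displaymath}
R_{\varphi(x)}(s,t)=\Theta(s,t)+D\varphi(x(s))R_x(s,t),
\qquad
\Theta(s,t):=\varphi(x(t))-\varphi(x(s))-D\varphi(x(s))x(s,t).
\end{displaymath}

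The term $D\varphi(x(s))R_x(s,t)$ is bounded in norm by $\|D\varphi\|_{\infty}\|R_x(s,t)\|$, hence inherits the finite $p/2$-variation of $R_x$. For the Taylor term $\Theta$, a first order expansion together with the Lipschitz continuity of $D\varphi$ yields the pointwise quadratic bound

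\begin{displaymath}
\|\Theta(s,t)\|\leqslant\frac{1}{2}\|D\varphi\|_{\textrm{Lip}}\|x(s,t)\|^{2}\leqslant\frac{1}{2}\|D\varphi\|_{\textrm{Lip}}\,\omega(s,t)^{2/p}.
\end{displaymath}

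The one genuinely delicate step is to convert this pointwise estimate into a $p/2$-variation bound, and this is exactly where the super-additivity of $\omega$ enters: for any dissection $(s_k)$ of $[s,t]$, raising the bound to the power $p/2$ and summing gives

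\begin{displaymath}
\sum_k\|\Theta(s_k,s_{k+1})\|^{p/2}\leqslant\Big(\tfrac{1}{2}\|D\varphi\|_{\textrm{Lip}}\Big)^{p/2}\sum_k\omega(s_k,s_{k+1})\leqslant\Big(\tfrac{1}{2}\|D\varphi\|_{\textrm{Lip}}\Big)^{p/2}\omega(s,t),
\end{displaymath}

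since $(2/p)\cdot(p/2)=1$. Taking the supremum over dissections shows $\Theta\in C^{p/2\textrm{-var}}$. Adding the two contributions gives $\|R_{\varphi(x)}\|_{p/2\textrm{-var},T}<\infty$, which together with the control of $\varphi(x)'$ establishes $(\varphi(x),\varphi(x)')\in\mathfrak D_{z}^{p/2}([0,T],\mathcal M_{e,d}(\mathbb R))$. Apart from this super-additivity step, the product rule for $p$-variation and the Taylor estimate are routine.
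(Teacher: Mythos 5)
Your proof is correct and is essentially the argument the paper leaves implicit: the proposition is stated there without proof, and the remark immediately following it records exactly the three estimates you establish, obtained from the same decomposition $R_{\varphi(x)}(s,t)=\bigl(\varphi(x(t))-\varphi(x(s))-D\varphi(x(s))x(s,t)\bigr)+D\varphi(x(s))R_x(s,t)$, the same product rule for the Gubinelli derivative, and the same super-additivity argument converting the pointwise Taylor bound into a $p/2$-variation bound. The one caveat, which your proof shares with the paper's own remark, is that both the quadratic bound on the Taylor term and the $p$-variation bound on $D\varphi(x(\cdot))$ invoke the Lipschitz continuity of $D\varphi$, which the hypothesis $\varphi\in\textrm{Lip}^{\gamma-1}$ guarantees only when $\gamma-1\geqslant 2$; for $\gamma\in\, ]p,3[$ one only has $\|D\varphi(b)-D\varphi(a)\|\leqslant\|\varphi\|_{\textrm{Lip}^{\gamma-1}}\|b-a\|^{\gamma-2}$, which yields $\|\Theta(s,t)\|\leqslant C\,\omega(s,t)^{(\gamma-1)/p}$ and hence a remainder of finite $p/(\gamma-1)$-variation rather than $p/2$-variation, so in that regime the statement requires either strengthening the hypothesis on $\varphi$ or relaxing the remainder exponent in Definition \ref{controlled_rough_paths}.
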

\noindent
\textbf{Remark.} By Theorem \ref{rough_integral} and Proposition \ref{composition_rough_regular} together,
\begin{displaymath}
\int_{0}^{.}\varphi(x(u))d\mathbf z(u)
\end{displaymath}
is defined. For every $(s,t)\in\Delta_T$, consider
\begin{displaymath}
\mathfrak I_{\varphi,\mathbf z,x}(s,t) :=
\left\|
\int_{s}^{t}
\varphi(x(u))d\mathbf z(u)
-\varphi(x(s))z(s,t) - D\varphi(x(s))x'(s)\mathbb Z(s,t)
\right\|.
\end{displaymath}
For every $(s,t)\in\Delta_T$, since
\begin{eqnarray*}
 \|\varphi(x)\|_{p\textrm{-var},s,t}
 & \leqslant &
 \|\varphi\|_{\textrm{Lip}^{\gamma - 1}}\|x\|_{p\textrm{-var},s,t},\\
 \|\varphi(x)'\|_{p\textrm{-var},s,t}
 & \leqslant &
 \|\varphi\|_{\textrm{Lip}^{\gamma - 1}}(\|x'\|_{p\textrm{-var},s,t} +\|x'\|_{\infty,s,t}\|x\|_{p\textrm{-var},s,t})
 \textrm{ and}\\
 \|R_{\varphi(x)}\|_{p/2\textrm{-var},s,t}
 & \leqslant &
 \|\varphi\|_{\textrm{Lip}^{\gamma - 1}}(\|x\|_{p\textrm{-var},s,t}^{2} +\|R_x\|_{p/2\textrm{-var},s,t}),
\end{eqnarray*}
by Theorem \ref{rough_integral},
\begin{eqnarray*}
 \mathfrak I_{\varphi,\mathbf z,x}(s,t)
 & \leqslant &
 c(p)\|\varphi\|_{\textrm{Lip}^{\gamma - 1}}(
 \|x'\|_{p\textrm{-var},s,t} +\|x'\|_{\infty,s,t}\|x\|_{p\textrm{-var},s,t}\\
 & &
 +\|x\|_{p\textrm{-var},s,t}^{2} +\|R_x\|_{p/2\textrm{-var},s,t})\omega_{p,\mathbf z}(s,t)^{1/p},
\end{eqnarray*}
where $\omega_{p,\mathbf z} :\Delta_T\rightarrow\mathbb R_+$ is the control function defined by
\begin{displaymath}
\omega_{p,\mathbf z}(u,v) :=
2^{p - 1}(\|z\|_{p\textrm{-var},u,v}^{p} +
\|\mathbb Z\|_{p/2\textrm{-var},u,v}^{p})
\textrm{ $;$ }
\forall (u,v)\in\Delta_T.
\end{displaymath}
%


%
\begin{proposition}\label{continuity_rough_integral_composition}
Consider $\mathbf z := (z,\mathbb Z)\in G\Omega_{p,T}(\mathbb R^d)$, $\varphi\in\normalfont{\textrm{Lip}}^{\gamma - 1}(\mathbb R^e,\mathbb R^d)$, a continuous map
\begin{displaymath}
(x,x') : [0,T]\longrightarrow
\mathbb R^e\times
\mathcal L(\mathbb R^d,\mathbb R^e),
\end{displaymath}
and a sequence $(x_n,x_n')_{n\in\mathbb N}$ of elements of $\mathfrak D_{z}^{p/2}([0,T],\mathbb R^e)$ such that
\begin{displaymath}
(x_n',R_{x_n})\xrightarrow[n\rightarrow\infty]{d_{\infty,T}} (x',R_x)
\textrm{ and }
\sup_{n\in\mathbb N}
\|(x_n,x_n')\|_{z,p/2,T} <\infty.
\end{displaymath}
Then, $(\varphi(x),\varphi(x)')\in\mathfrak D_{z}^{p/2}([0,T],\mathcal M_{e,d}(\mathbb R))$ and
\begin{displaymath}
\lim_{n\rightarrow\infty}
\left\|\int_{0}^{.}\varphi(x_n(s))d\mathbf z(s) -\int_{0}^{.}\varphi(x(s))d\mathbf z(s)\right\|_{\infty,T} = 0.
\end{displaymath}
\end{proposition}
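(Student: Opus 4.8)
The plan is to deduce the statement from Proposition \ref{continuity_rough_integral} applied to the composed controlled paths, using Proposition \ref{composition_rough_regular} (and the estimates in the Remark following it) to manufacture and control the relevant sequence. Concretely, I would set $y_n := \varphi(x_n)$, $y_n' := \varphi(x_n)' = D\varphi(x_n)x_n'$, $y := \varphi(x)$ and $y' := \varphi(x)' = D\varphi(x)x'$. With these data the desired conclusion is exactly the conclusion of Proposition \ref{continuity_rough_integral}, so the whole task reduces to checking that its two hypotheses hold for $(y_n,y_n')$ and $(y,y')$.

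As a preliminary step, I would argue, exactly as in the proof of Proposition \ref{continuity_rough_integral}, that the convergence of $(x_n',R_{x_n})$ to $(x',R_x)$ for $d_{\infty,T}$ forces $x_n\to x$ uniformly on $[0,T]$ and, via Proposition \ref{uniform_convergence_p_variation}, that $(x,x')\in\mathfrak D_{z}^{p/2}([0,T],\mathbb R^e)$. Since $p\geqslant 2$ and $\gamma>p$, one has $\gamma-1>1$, so $D\varphi$ exists and is bounded and continuous; hence $(\varphi(x),\varphi(x)')$ is a continuous map into $\mathcal M_{e,d}(\mathbb R)\times\mathcal L(\mathbb R^d,\mathcal M_{e,d}(\mathbb R))$. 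Applying Proposition \ref{composition_rough_regular} to each $(x_n,x_n')$ shows $(\varphi(x_n),\varphi(x_n)')\in\mathfrak D_{z}^{p/2}([0,T],\mathcal M_{e,d}(\mathbb R))$, so the sequence $(y_n,y_n')$ is a legitimate input for Proposition \ref{continuity_rough_integral}.

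For the uniform bound $\sup_n\|(\varphi(x_n),\varphi(x_n)')\|_{z,p/2,T}<\infty$ I would use the estimates from the Remark after Proposition \ref{composition_rough_regular},
\[
\|\varphi(x_n)'\|_{p\textrm{-var},T}\leqslant\|\varphi\|_{\textrm{Lip}^{\gamma-1}}(\|x_n'\|_{p\textrm{-var},T}+\|x_n'\|_{\infty,T}\|x_n\|_{p\textrm{-var},T}),
\]
\[
\|R_{\varphi(x_n)}\|_{p/2\textrm{-var},T}\leqslant\|\varphi\|_{\textrm{Lip}^{\gamma-1}}(\|x_n\|_{p\textrm{-var},T}^2+\|R_{x_n}\|_{p/2\textrm{-var},T}),
\]
and bound the right-hand sides uniformly in $n$ from $\sup_n\|(x_n,x_n')\|_{z,p/2,T}<\infty$, the bound $\sup_n\|x_n'\|_{\infty,T}<\infty$ (coming from $x_n'\to x'$), and the controlled-path estimate $\|x_n\|_{p\textrm{-var},T}\leqslant\|x_n'\|_{\infty,T}\|z\|_{p\textrm{-var},T}+\|R_{x_n}\|_{p/2\textrm{-var},T}$. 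For the convergence $(\varphi(x_n)',R_{\varphi(x_n)})\to(\varphi(x)',R_{\varphi(x)})$ for $d_{\infty,T}$ I would pass to the limit termwise in $\varphi(x_n)'=D\varphi(x_n)x_n'$ and in
\[
R_{\varphi(x_n)}(s,t)=\varphi(x_n(t))-\varphi(x_n(s))-D\varphi(x_n(s))x_n'(s)z(s,t),
\]
invoking the uniform continuity of $\varphi$ and $D\varphi$ on a compact set containing all ranges $x_n([0,T])$ and $x([0,T])$, the uniform convergences $x_n\to x$ and $x_n'\to x'$, and the boundedness of $z$.

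Once both hypotheses are verified, Proposition \ref{continuity_rough_integral} delivers simultaneously $(\varphi(x),\varphi(x)')\in\mathfrak D_{z}^{p/2}([0,T],\mathcal M_{e,d}(\mathbb R))$ and $\|\int_{0}^{.}\varphi(x_n(s))d\mathbf z(s)-\int_{0}^{.}\varphi(x(s))d\mathbf z(s)\|_{\infty,T}\to 0$, which is precisely the claim. I expect the main obstacle to be the $d_{\infty,T}$-convergence of the remainders $R_{\varphi(x_n)}$: this must be controlled uniformly over the whole simplex $\Delta_T$, and that is exactly where the uniform continuity of $\varphi$ and $D\varphi$ on a common compact set, together with the uniform control of $x_n$ and $x_n'$, are indispensable; by contrast, the uniform bound on the controlled norm is routine bookkeeping with the cited estimates.
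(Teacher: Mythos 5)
Your argument is correct and follows essentially the same route as the paper: both reduce the statement to Proposition \ref{continuity_rough_integral} applied to the composed controlled paths $(\varphi(x_n),\varphi(x_n)')$. The only difference is that the paper invokes Friz and Hairer's stability theorem for composition (their Theorem 7.5) together with Proposition \ref{uniform_convergence_p_variation} to get the uniform bound and the $d_{\infty,T}$-convergence of $(\varphi(x_n)',R_{\varphi(x_n)})$, whereas you verify these two hypotheses by hand from the estimates in the Remark after Proposition \ref{composition_rough_regular} and uniform continuity of $\varphi$ and $D\varphi$ on a common compact set --- a harmless, self-contained substitute for the citation.
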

%


%
\begin{proof}
Since
\begin{displaymath}
(x_n',R_{x_n})\xrightarrow[n\rightarrow\infty]{d_{\infty,T}} (x',R_x)
\textrm{ and }
\sup_{n\in\mathbb N}
\|(x_n,x_n')\|_{z,p/2,T} <\infty,
\end{displaymath}
by Friz and Hairer \cite[Theorem 7.5]{FH14} together with Proposition \ref{uniform_convergence_p_variation},
\begin{displaymath}
(\varphi(x_n)',R_{\varphi(x_n)})\xrightarrow[n\rightarrow\infty]{d_{\infty,T}} (\varphi(x)',R_{\varphi(x)})
\textrm{ and }
\sup_{n\in\mathbb N}
\|(\varphi(x_n),\varphi(x_n)')\|_{z,p/2,T} <\infty.
\end{displaymath}
So, by Proposition \ref{continuity_rough_integral},
\begin{displaymath}
\lim_{n\rightarrow\infty}
\left\|\int_{0}^{.}\varphi(x_n(s))d\mathbf z(s) -\int_{0}^{.}\varphi(x(s))d\mathbf z(s)\right\|_{\infty,T} = 0.
\end{displaymath}
\end{proof}
%


%
\section{Existence of solutions}
The existence of a solution to Problem (\ref{rough_sweeping_process}) is established in Theorem \ref{existence_Young} when $p\in [1,2[$, and in Theorem \ref{existence_rough} when $p\in [2,3[$.
%


%
\begin{theorem}\label{existence_Young}
Under Assumption \ref{assumption_C}, if $p\in [1,2[$,
Problem (\ref{rough_sweeping_process}) has at least one solution which belongs to $C^{p\normalfont{\textrm{-var}}}([0,T],\mathbb R^e)$.
\end{theorem}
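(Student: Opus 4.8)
The plan is to realise a solution as a fixed point of the map $\Phi$ sending $X$ to $v_{H[X]}$, where $H[X](t):=\int_0^t f(X(s))\,dZ(s)$ is the Young integral (well defined for $X\in C^{p\textrm{-var}}$ since $1/p+1/p>1$ when $p\in[1,2[$, and $f$ is Lipschitz and bounded because $f\in\textrm{Lip}^{\gamma}$) and $(v_{H[X]},w_{H[X]})$ is the Skorokhod decomposition of $(C,a,H[X])$ of Section 2. If $X=\Phi(X)$, then putting $H:=H[X]$ and $Y:=w_{H[X]}$ one has $X=H+Y$, $Y(0)=a$ and the reflection inclusion for $C_H$, i.e.\ exactly Problem (\ref{rough_sweeping_process}); moreover $X=H+Y$ has finite $p$-variation since $H$ does and $Y$ has finite $1$-variation. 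I would obtain the fixed point by Schauder's theorem in $(C^0([0,T],\mathbb R^e),\|.\|_{\infty,T})$.

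First I would build the invariant convex set. Since $(u,v)\mapsto\|Z\|_{p\textrm{-var},u,v}$ is continuous and superadditive, there is a finite dissection $0=\tau_0<\dots<\tau_m=T$ with $\|Z\|_{p\textrm{-var},\tau_i,\tau_{i+1}}\leqslant\beta$ for each $i$, where (writing $c:=c(p,p)$ and $V_0:=2\mathfrak M(R/2)+2c\|f\|_\infty\beta$, with $R$ from (\ref{dissection_C})) I fix $\beta>0$ small enough that $c\|f\|_{\textrm{Lip}}\beta\leqslant 1/2$ and $c\beta(\|f\|_{\textrm{Lip}}V_0+\|f\|_\infty)\leqslant R/2$; this is possible because the last left-hand side is $O(\beta)$ while $V_0$ stays bounded as $\beta\to 0$. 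Set $I_i:=[\tau_i,\tau_{i+1}]$. For $X$ with $\|X\|_{p\textrm{-var},I_i}\leqslant V_0$, Proposition \ref{Young_integral} gives $\|H[X]\|_{p\textrm{-var},I_i}\leqslant c\|Z\|_{p\textrm{-var},I_i}(\|f\|_{\textrm{Lip}}\|X\|_{p\textrm{-var},I_i}+\|f\|_\infty)\leqslant c\beta(\|f\|_{\textrm{Lip}}V_0+\|f\|_\infty)$, whence $\|H[X]\|_{0,I_i}\leqslant\|H[X]\|_{p\textrm{-var},I_i}\leqslant R/2$, so Proposition \ref{regularity_control_SD}(2) yields $\|w_{H[X]}\|_{1\textrm{-var},I_i}\leqslant\mathfrak M(R/2)$ and therefore $\|v_{H[X]}\|_{p\textrm{-var},I_i}\leqslant\|H[X]\|_{p\textrm{-var},I_i}+\|w_{H[X]}\|_{1\textrm{-var},I_i}\leqslant\tfrac12 V_0+c\|f\|_\infty\beta+\mathfrak M(R/2)=V_0$. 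I would then take $\mathcal K:=\{X\in C^0([0,T],\mathbb R^e):X(0)=a,\ X(t)\in C(t)\ \forall t,\ \|X\|_{p\textrm{-var},I_i}\leqslant V_0\ \forall i\}$, which is convex, bounded and closed in $C^0$ (the $p$-variation constraints being closed by the lower semicontinuity contained in Proposition \ref{uniform_convergence_p_variation}), nonempty (it contains the unperturbed solution $v_0$), and satisfies $\Phi(\mathcal K)\subseteq\mathcal K$ by the estimate above together with $v_{H[X]}(0)=a$ and $v_{H[X]}(t)\in C(t)$.

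Next I would check continuity and compactness. If $X_n\to X$ uniformly in $\mathcal K$, then $f(X_n)\to f(X)$ uniformly with $\sup_n\|f(X_n)\|_{p\textrm{-var},T}<\infty$, so Proposition \ref{continuity_Young_integral} gives $H[X_n]\to H[X]$ uniformly and Theorem \ref{PRF_key_result} gives $v_{H[X_n]}\to v_{H[X]}$ uniformly; hence $\Phi$ is continuous. For compactness, the family $\{H[X]:X\in\mathcal K\}$ is uniformly bounded and equicontinuous: with $M:=\sup_{X\in\mathcal K}\|X\|_{p\textrm{-var},T}<\infty$ (obtained by patching the $I_i$-bounds), Proposition \ref{Young_integral} gives $\|H[X](u,v)\|\leqslant\|H[X]\|_{p\textrm{-var},u,v}\leqslant c\|Z\|_{p\textrm{-var},u,v}(\|f\|_{\textrm{Lip}}M+\|f\|_\infty)$, which tends to $0$ uniformly as $v-u\to 0$ since the control $\omega_{p,Z}$ is continuous and vanishes on the diagonal. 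By Arzel\`a--Ascoli this family is relatively compact in $C^0$, and $\Phi(\mathcal K)$ is its image under the continuous Skorokhod map (Theorem \ref{PRF_key_result}), hence relatively compact. Schauder's fixed point theorem then furnishes $X=\Phi(X)\in\mathcal K$, which is the desired solution, lying in $C^{p\textrm{-var}}([0,T],\mathbb R^e)$.

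I expect the main obstacle to be the a priori estimate closing the loop between the $p$-variation of $X$, the oscillation of $H[X]$, and the reflection bound of Proposition \ref{regularity_control_SD}(2): a global estimate fails because the Young constant multiplies $\|X\|_{p\textrm{-var}}$ and because $\mathfrak M(R/2)$ need not be $\leqslant R/2$. The circularity is broken only by working on the controlled intervals $I_i$, where the prefactor $\|Z\|_{p\textrm{-var},I_i}\leqslant\beta$ can simultaneously be made $\leqslant 1/(2c\|f\|_{\textrm{Lip}})$ and small enough to force $\|H[X]\|_{0,I_i}\leqslant R/2$ whenever $\|X\|_{p\textrm{-var},I_i}\leqslant V_0$, so that the reflection bound applies with the fixed constant $\mathfrak M(R/2)$ and the self-map property is preserved interval by interval.
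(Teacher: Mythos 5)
Your proposal is correct, and it reaches the result by a genuinely different route from the paper. The paper runs the Picard-type iteration $X_n=v_{H[X_{n-1}]}$, proves by induction the same kind of localized a priori bounds you derive (small intervals where $\|Z\|_{p\textrm{-var},I_i}$ is below a threshold, the Young estimate of Proposition \ref{Young_integral}, and the reflection bound $\mathfrak M(R/2)$ of Proposition \ref{regularity_control_SD}(2)), then extracts a uniformly convergent subsequence of $(H_n)$ by Arzel\`a--Ascoli and identifies the limit as a solution via Theorem \ref{PRF_key_result} and Proposition \ref{continuity_Young_integral}. You instead package the same estimates into an invariant closed convex set $\mathcal K$ for the map $\Phi=v_{H[\cdot]}$ and invoke Schauder; your verification of the self-map property (the balance $V_0=2\mathfrak M(R/2)+2c\|f\|_\infty\beta$ with $c\|f\|_{\textrm{Lip}}\beta\leqslant 1/2$ and $\|H[X]\|_{0,I_i}\leqslant R/2$) closes correctly, and continuity and compactness of $\Phi$ follow from exactly the propositions you cite. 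What the fixed-point formulation buys is a cleaner identification step: since the equation $X=\Phi(X)$ is exact, you never have to pass to the limit in a relation of the form $H_n=\int_0^{\cdot}f(X_{n-1})\,dZ$ along a subsequence indexed by $\varphi(n)$ while only controlling $X_{\varphi(n)}$ (an index shift the paper's argument handles rather tersely). What the paper's iteration buys in exchange is that it doubles as a constructive approximation of a solution and transfers with almost no change to the rough case $p\in[2,3[$ of Theorem \ref{existence_rough}, where the controlled-path structure $(X_n,f(X_{n-1}))$ is built along the iteration; a Schauder argument there would require carrying the Gubinelli derivative and the remainder bound inside the invariant set, which is heavier.
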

%


%
\begin{proof}
Consider the discrete scheme
\begin{equation}\label{Skorokhod_problem_discretized_1}
\left\{
\begin{array}{rcl}
 X_n(t) & = &
 H_n(t) + Y_n(t)\\
 H_n(t) & = &
 \displaystyle{
 \int_{0}^{t}f(X_{n - 1}(s))dZ(s)}\\
 -\displaystyle{\frac{dDY_n}{d|DY_n|}}(t) & \in & N_{C_{H_n}(t)}(Y_n(t))
 \textrm{ $|DY_n|$-a.e. with }Y_n(0) = a
\end{array}
\right.
\end{equation}
for Problem (\ref{rough_sweeping_process}), initialized by
\begin{equation}\label{Skorokhod_problem_discretized_1_initial}
\left\{
\begin{array}{rcl}
 -\displaystyle{\frac{dDX_0}{d|DX_0|}}(t) & \in & N_{C(t)}(X_0(t))
 \textrm{ $|DX_0|$-a.e.}\\
 X_0(0) & = & a.
\end{array}
\right.
\end{equation}
Since the map $\|Z\|_{p\textrm{-var},0,.}$ is continuous from $[0,T]$ into $\mathbb R_+$, and since $\|Z\|_{p\textrm{-var},0,0} = 0$, there exists $\tau_0\in [0,T]$ such that
\begin{displaymath}
\|Z\|_{p\textrm{-var},\tau_0}\leqslant\mu :=
\frac{m}{c(p,p)\|f\|_{\textrm{Lip}^{\gamma}}(m + M + 1)},
\end{displaymath}
where $m := R/2$ and $M :=\mathfrak M(R/2)$ (see Proposition \ref{regularity_control_SD}.(2)). Let us show that for every $n\in\mathbb N$,
\begin{equation}\label{local_control_1}
\left\{
\begin{array}{rcl}
 \|X_n\|_{p\textrm{-var},\tau_0} & \leqslant & m + M\\
 \|H_n\|_{p\textrm{-var},\tau_0} & \leqslant & m\\
 \|Y_n\|_{1\textrm{-var},\tau_0} & \leqslant & M.
\end{array}
\right.
\end{equation}
By (\ref{Skorokhod_problem_discretized_1_initial}) together with Proposition \ref{regularity_control_SD},
\begin{displaymath}
\|X_0\|_{p\textrm{-var},\tau_0}
\leqslant M.
\end{displaymath}
Assume that Condition (\ref{local_control_1}) is satisfied for $n\in\mathbb N$ arbitrarily chosen. By Proposition \ref{Young_integral}, and since $\|Z\|_{p\textrm{-var},0,.}$ is an increasing map,
\begin{eqnarray*}
 \|H_{n + 1}\|_{p\textrm{-var},\tau_0}
 & \leqslant &
 c(p,p)\|Z\|_{p\textrm{-var},\tau_0}(
 \|Df\|_{\infty}\|X_n\|_{p\textrm{-var},\tau_0} +\|f\circ X_n\|_{\infty,\tau_0})\\
 & \leqslant &
 \mu c(p,p)\|f\|_{\textrm{Lip}^{\gamma}}(m + M + 1)\\
 & \leqslant & m.
\end{eqnarray*}
Since $Y_{n + 1} = w_{H_{n + 1}}$, by Proposition \ref{regularity_control_SD},
\begin{displaymath}
\|Y_{n + 1}\|_{1\textrm{-var},\tau_0}
\leqslant M.
\end{displaymath}
Therefore,
\begin{displaymath}
\|X_{n + 1}\|_{p\textrm{-var},\tau_0}
\leqslant
\|H_{n + 1}\|_{p\textrm{-var},\tau_0} +\|Y_{n + 1}\|_{p\textrm{-var},\tau_0}
\leqslant m + M.
\end{displaymath}
By induction, (\ref{local_control_1}) is satisfied for every $n\in\mathbb N$.
\\
\\
For every $t\in [0,T]$, the map $\|Z\|_{p\textrm{-var},t,.}$ is continuous from $[t,T]$ into $\mathbb R_+$ and $\|Z\|_{p\textrm{-var},t,t} = 0$. Moreover, the constant $\mu$ depends only on $p$, $m$, $M$ and $\|f\|_{\textrm{Lip}^{\gamma}}$. So, since $[0,T]$ is compact, there exist $N\in\mathbb N^*$ and $(\tau_k)_{k\in\llbracket 0,N\rrbracket}\in\mathfrak D_{[\tau_0,T]}$ such that
\begin{displaymath}
\|Z\|_{p\textrm{-var},\tau_k,\tau_{k + 1}}\leqslant\mu
\textrm{ $;$ }
\forall k\in\llbracket 0,N - 1\rrbracket.
\end{displaymath}
Since for every $n\in\mathbb N^*$ the maps
\begin{eqnarray*}
 (s,t)\in\Delta_T & \longmapsto &
 \|X_n\|_{p\textrm{-var},s,t}^{p},\\
 (s,t)\in\Delta_T & \longmapsto &
 \|H_n\|_{p\textrm{-var},s,t}^{p}
 \textrm{ and}\\
 (s,t)\in\Delta_T & \longmapsto &
 \|Y_n\|_{1\textrm{-var},s,t}
\end{eqnarray*}
are control functions, recursively, the sequence $(H_n,X_n,Y_n)_{n\in\mathbb N^*}$ is bounded in
\begin{displaymath}
\mathfrak C_{T}^{p,1} :=
C^{p\textrm{-var}}([0,T],\mathbb R^e)\times
C^{p\textrm{-var}}([0,T],\mathbb R^e)\times
C^{1\textrm{-var}}([0,T],\mathbb R^e).
\end{displaymath}
By Proposition \ref{Young_integral}, for every $n\in\mathbb N^*$ and $(s,t)\in\Delta_T$,
\begin{displaymath}
\|H_n(t) - H_n(s)\|\leqslant
c(p,p)\left(
\|Df\|_{\infty}\sup_{n\in\mathbb N}\|X_n\|_{p\textrm{-var},T} +
\|f\|_{\infty}\right)\|Z\|_{p\textrm{-var},s,t}.
\end{displaymath}
Since $(s,t)\in\Delta_T\mapsto\|Z\|_{p\textrm{-var},s,t}$ is a continuous map such that $\|Z\|_{p\textrm{-var},t,t} = 0$ for every $t\in [0,T]$, $(H_n)_{n\in\mathbb N^*}$ is equicontinuous. Therefore, by Arzel\`a-Ascoli's theorem together with Proposition \ref{uniform_convergence_p_variation}, there exists an extraction $\varphi :\mathbb N^*\rightarrow\mathbb N^*$ such that $(H_{\varphi(n)})_{n\in\mathbb N^*}$ converges uniformly to an element $H$ of $C^{p\textrm{-var}}([0,T],\mathbb R^e)$.
\\
\\
Since $(H_{\varphi(n)})_{n\in\mathbb N^*}$ converges uniformly to $H$, by Theorem \ref{PRF_key_result}, $(X_{\varphi(n)},Y_{\varphi(n)})_{n\in\mathbb N^*}$ converges uniformly to $(X,Y) := (v_H,w_H)$. So, for every $t\in [0,T]$,
\begin{displaymath}
\left\{
\begin{array}{rcl}
 X(t) & = & H(t) + Y(t)\\
 -\displaystyle{\frac{
 dDY}{d|DY|}}(t) & \in & N_{C_H(t)}(Y(t))
 \textrm{ $|DY|$-a.e. with }Y(0) = a,
\end{array}
\right.
\end{displaymath}
and by Proposition \ref{uniform_convergence_p_variation},
\begin{displaymath}
X\in C^{p\textrm{-var}}([0,T],\mathbb R^e)
\textrm{ and }
Y\in C^{1\textrm{-var}}([0,T],\mathbb R^e).
\end{displaymath}
Moreover, since $(X_{\varphi(n)})_{n\in\mathbb N^*}$ converges uniformly to $X$, by Proposition \ref{continuity_Young_integral},
\begin{displaymath}
\lim_{n\rightarrow\infty}
\left\|H_{\varphi(n)} -
\int_{0}^{.}f(X(s))dZ(s)
\right\|_{\infty,T} = 0.
\end{displaymath}
Therefore, since $(H_{\varphi(n)})_{n\in\mathbb N^*}$ converges also to $H$ in $C^0([0,T],\mathbb R^e)$,
\begin{displaymath}
H(t) =
\int_{0}^{t}f(X(s))dZ(s)
\textrm{ $;$ }
\forall t\in [0,T].
\end{displaymath}
\end{proof}
\noindent
In the sequel, assume that there exists $\mathbb Z : [0,T]\rightarrow\mathcal M_d(\mathbb R)$ such that $\mathbf Z := (Z,\mathbb Z)\in G\Omega_{p,T}(\mathbb R^d)$.
%


%
\begin{theorem}\label{existence_rough}
Under Assumption \ref{assumption_C}, if $p\in [2,3[$,
Problem (\ref{rough_sweeping_process}) has at least one solution which belongs to $C^{p\normalfont{\textrm{-var}}}([0,T],\mathbb R^e)$.
\end{theorem}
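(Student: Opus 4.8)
The plan is to mimic the proof of Theorem \ref{existence_Young}, replacing Young's integral by the rough integral throughout, and replacing the continuity results for the Young integral by their rough-path analogues established in the preliminaries. Concretely, I would introduce the Picard-type scheme
\begin{displaymath}
X_n(t) = H_n(t) + Y_n(t),\quad
H_n(t) = \int_{0}^{t}f(X_{n-1}(s))d\mathbf Z(s),\quad
-\frac{dDY_n}{d|DY_n|}(t)\in N_{C_{H_n}(t)}(Y_n(t)),
\end{displaymath}
with $Y_n(0)=a$, initialized by the unperturbed sweeping process $X_0$ as in \eqref{Skorokhod_problem_discretized_1_initial}. The essential difference from the Young case is that the iterate $X_{n-1}$ is no longer merely a path of finite $p$-variation: to make the rough integral meaningful, I must track the \emph{controlled rough path} structure $(X_{n-1},X_{n-1}')$, where $X_{n-1}'(s)=f(X_{n-2}(s))$ (the Gubinelli derivative coming from the integrand). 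Thus the quantity to be controlled on the small interval $[0,\tau_0]$ is not only $\|X_n\|_{p\textrm{-var},\tau_0}$ but the full controlled-path seminorm $\|(H_n,H_n')\|_{Z,p/2,\tau_0}$.

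First I would fix the local interval. Using the control function $\omega_{p,\mathbf Z}$ introduced after Proposition \ref{composition_rough_regular}, and its continuity together with $\omega_{p,\mathbf Z}(t,t)=0$, I choose $\tau_0$ so that $\omega_{p,\mathbf Z}(0,\tau_0)^{1/p}$ is smaller than a threshold $\mu$ depending only on $p$, $m:=R/2$, $M:=\mathfrak M(R/2)$ and $\|f\|_{\textrm{Lip}^{\gamma}}$. Then I would run the induction establishing, for every $n$, a bound of the form
\begin{displaymath}
\|H_n\|_{p\textrm{-var},\tau_0}\leqslant m,\quad
\|Y_n\|_{1\textrm{-var},\tau_0}\leqslant M,\quad
\|X_n\|_{p\textrm{-var},\tau_0}\leqslant m+M,
\end{displaymath}
together with a uniform bound on $\|R_{H_n}\|_{p/2\textrm{-var},\tau_0}$ and on the Gubinelli derivatives. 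The induction step combines Proposition \ref{composition_rough_regular} (so that $(f(X_{n-1}),f(X_{n-1})')$ is a controlled rough path, with the explicit estimates on $\|f(X_{n-1})\|_{p\textrm{-var}}$, $\|f(X_{n-1})'\|_{p\textrm{-var}}$ and $\|R_{f(X_{n-1})}\|_{p/2\textrm{-var}}$ recorded in the Remark after it) with the local estimate of Theorem \ref{rough_integral}.(1) controlling $H_n=\int f(X_{n-1})d\mathbf Z$; the smallness of $\omega_{p,\mathbf Z}(0,\tau_0)^{1/p}$ absorbs the quadratic terms, and the constraint $\|H_n\|_{p\textrm{-var},\tau_0}\leqslant m\leqslant R/2$ is exactly what Proposition \ref{regularity_control_SD}.(2) needs to conclude $\|Y_n\|_{1\textrm{-var},\tau_0}=\|w_{H_n}\|_{1\textrm{-var},\tau_0}\leqslant\mathfrak M(R/2)=M$. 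A compactness argument covers $[0,T]$ by finitely many such intervals on which $\omega_{p,\mathbf Z}$ is small, giving a global bound on $(H_n,X_n,Y_n)$ in $\mathfrak C_T^{p,1}$.

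The remainder is a compactness-and-passage-to-the-limit argument, exactly parallel to Theorem \ref{existence_Young} but using the rough-path continuity statements. The local estimate of Theorem \ref{rough_integral}.(1) shows $(H_n)$ is equicontinuous, so by Arzel\`a--Ascoli and Proposition \ref{uniform_convergence_p_variation} a subsequence $H_{\varphi(n)}$ converges uniformly to some $H\in C^{p\textrm{-var}}([0,T],\mathbb R^e)$. By Theorem \ref{PRF_key_result}, $(X_{\varphi(n)},Y_{\varphi(n)})=(v_{H_{\varphi(n)}},w_{H_{\varphi(n)}})$ converges uniformly to $(X,Y)=(v_H,w_H)$, which satisfies the third line of \eqref{rough_sweeping_process}. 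Finally I would invoke Proposition \ref{continuity_rough_integral_composition} to pass to the limit in the integral equation and obtain $H(t)=\int_0^t f(X(s))d\mathbf Z(s)$; here I must verify that the uniform bound $\sup_n\|(X_n,X_n')\|_{Z,p/2,T}<\infty$ and the convergence $(X_{\varphi(n)}',R_{X_{\varphi(n)}})\to(X',R_X)$ in $d_{\infty,T}$ hold, the latter being delicate because the Gubinelli derivative $X_n'=f(X_{n-1})$ is indexed by a shifted iterate. I expect the main obstacle to be precisely this bookkeeping of the controlled-rough-path data through the Picard scheme: ensuring that the pairs $(X_n,X_n')$ converge in the sense required by Proposition \ref{continuity_rough_integral_composition}, and that the limit $X$ carries a genuine Gubinelli derivative consistent with $X'=f(X)$, so that the rough integral $\int f(X)d\mathbf Z$ is well defined and is the uniform limit of the $H_{\varphi(n)}$.
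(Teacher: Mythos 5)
Your proposal follows essentially the same route as the paper's proof: the same Picard-type scheme with Gubinelli derivative $X_n'=f(X_{n-1})$, localization to an interval where $\omega_{p,\mathbf Z}$ is small, an induction bounding the $p$-variation of $H_n$, the $1$-variation of $Y_n$ and the $p/2$-variation of the remainder (the paper tracks $R_{X_n}$ rather than $R_{H_n}$, which differ only by the bounded-variation term $Y_n$), followed by Arzel\`a--Ascoli, Theorem \ref{PRF_key_result} and Proposition \ref{continuity_rough_integral_composition} to pass to the limit. The ``shifted iterate'' issue you flag at the end is indeed the point the paper treats most tersely, and it resolves exactly as you anticipate by setting $X':=f(X)$ and checking that $(X_{\varphi(n)}',R_{X_{\varphi(n)}})$ converges uniformly with bounded controlled-path seminorms.
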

%


%
\begin{proof}
Consider the discrete scheme
\begin{equation}\label{Skorokhod_problem_discretized_2}
\left\{
\begin{array}{rcl}
 X_n(t) & = &
 H_n(t) + Y_n(t)\\
 H_n(t) & = &
 \displaystyle{
 \int_{0}^{t}f(X_{n - 1}(s))d\mathbf Z(s)}\\
 -\displaystyle{\frac{dDY_n}{d|DY_n|}}(t) & \in & N_{C_{H_n}(t)}(Y_n(t))
 \textrm{ $|DY_n|$-a.e. with }Y_n(0) = a
\end{array}
\right.
\end{equation}
for Problem (\ref{rough_sweeping_process}), initialized by
\begin{equation}\label{Skorokhod_problem_discretized_2_initial}
\left\{
\begin{array}{rcl}
 -\displaystyle{\frac{dDX_0}{d|DX_0|}(t)} & \in & N_{C(t)}(X_0(t))
 \textrm{ $|DX_0|$-a.e.}\\
 X_0(0) & = & a.
\end{array}
\right.
\end{equation}
Since the map $\omega_{p,\mathbf Z}(0,.)$ is continuous from $[0,T]$ into $\mathbb R_+$, and since $\omega_{p,\mathbf Z}(0,0) = 0$, there exists $\tau_0\in [0,T]$ such that
\begin{multline*}
 \omega_{p,\mathbf Z}(0,\tau_0)
  \leqslant 
 \frac{m_C}{c(p,1)^p\|f\|_{\textrm{Lip}^{\gamma}}^{p}(M_C + 1)^p}
\,\wedge\, \frac{m_C}{(c_{2}\vee c_6)^{p}(1 +\mu_C + M_R +\mu_{C}^{2})^p}\\
\wedge\, \frac{1}{1 +\mu_{C}^{p} + M_{R}^{p} +\mu_{C}^{2p}},
\end{multline*}
where $m_C := R/2$, $M_C :=\mathfrak M(R/2)$, $\mu_C := m_C + M_C$,
\begin{displaymath}
M_R := (c_1M_C4^{1/p})\vee(c_5(\mu_{C}^{p} + 1)^{1/p})
\end{displaymath}
and the positive constants $c_1$, $c_2$, $c_5$ and $c_6$, depending only on $p$ and $\|f\|_{\textrm{Lip}^{\gamma}}$, are defined in the sequel.
\\
\\
First of all, let us control the solution of the discrete scheme for $n\in\{0,1\}$:
\begin{itemize}
 \item ($n = 0$) By (\ref{Skorokhod_problem_discretized_2_initial}) together with Proposition \ref{regularity_control_SD}:
 \begin{displaymath}
 \|X_0\|_{1\textrm{-var},\tau_0}
 \leqslant M_C.
 \end{displaymath}
 \item ($n = 1$) Since $X_0\in C^{1\textrm{-var}}([0,T],\mathbb R^e)$, by Proposition \ref{Young_integral}:
 \begin{eqnarray*}
  \|H_1\|_{p\textrm{-var},\tau_0}
  & \leqslant &
  c(p,1)\omega_{p,\mathbf Z}(0,\tau_0)^{1/p}\|f\|_{\textrm{Lip}^{\gamma}}(M_C + 1)\\
  & \leqslant & m_C.
 \end{eqnarray*}
 Since $Y_1 = w_{H_1}$, by Proposition \ref{regularity_control_SD}:
 \begin{displaymath}
 \|Y_1\|_{1\textrm{-var},\tau_0}
 \leqslant M_C.
 \end{displaymath}
 Therefore,
 \begin{displaymath}
 \|X_1\|_{p\textrm{-var},\tau_0}
 \leqslant
 \|H_1\|_{p\textrm{-var},\tau_0} +
 \|Y_1\|_{p\textrm{-var},\tau_0}
 \leqslant
 \mu_C.
 \end{displaymath}
\end{itemize}
Let us show that for every $n\in\mathbb N\backslash\{0,1\}$,
\begin{equation}\label{local_control_2_1}
(X_{n - 1},f(X_{n - 2}))\in\mathfrak D_{Z}^{p/2}([0,\tau_0],\mathbb R^e)
\end{equation}
and
\begin{equation}\label{local_control_2_2}
\left\{
\begin{array}{rcl}
 \|X_n\|_{p\textrm{-var},\tau_0} & \leqslant & \mu_C\\
 \|H_n\|_{p\textrm{-var},\tau_0} & \leqslant & m_C\\
 \|Y_n\|_{1\textrm{-var},\tau_0} & \leqslant & M_C\\
 \|R_{X_n}\|_{p/2\textrm{-var},\tau_0} & \leqslant & M_R.
\end{array}\right.
\end{equation}
Set $X_1' := f(X_0)$. For every $(s,t)\in\Delta_{\tau_0}$,
\begin{eqnarray*}
 R_{X_1}(s,t) & = & 
 X_1(s,t) - X_1'(s)Z(s,t)\\
 & = &
 Y_1(s,t) +
 \int_{s}^{t}f(X_0(u))dZ(u) - f(X_0(s))Z(s,t).
\end{eqnarray*}
By Young-Love estimate (see Friz and Victoir \cite[Theorem 6.8]{FV10}, or \cite[Section 3.6 and the interesting historical notes pages 212-213]{DN11}), for every $(s,t)\in\Delta_{\tau_0}$,
\begin{displaymath}
\|R_{X_1}(s,t)\|\leqslant
\|Y_1\|_{p/2\textrm{-var},s,t} +
\frac{1}{1 - 2^{1-3/p}}\|f\|_{\textrm{Lip}^{\gamma}}\|Z\|_{p\textrm{-var},\tau_0}\|X_0\|_{p/2\textrm{-var},s,t}.
\end{displaymath}
By super-additivity of the control functions $\|Y_1\|_{p/2\textrm{-var},.}^{p/2}$ and $\|X_0\|_{p/2\textrm{-var},.}^{p/2}$, there exists a constant $c_1 > 0$, depending only on $p$ and $\|f\|_{\textrm{Lip}^{\gamma}}$, such that
\begin{eqnarray*}
 \|R_{X_1}\|_{p/2\textrm{-var},\tau_0}
 & \leqslant &
 c_1(\|Y_1\|_{p/2\textrm{-var},\tau_0}^{p/2} +
 \|Z\|_{p\textrm{-var},\tau_0}^{p/2}
 \|X_0\|_{p/2\textrm{-var},\tau_0}^{p/2})^{2/p}\\
 & \leqslant &
 c_1M_C(1 +
 \omega_{p,\mathbf Z}(0,\tau_0)^{1/2})^{2/p}.
\end{eqnarray*}
Then, $\|R_{X_1}\|_{p/2\textrm{-var},\tau_0}\leqslant c_1M_C4^{1/p}\leqslant M_R$ and
\begin{displaymath}
(X_1,f(X_0))\in\mathfrak D_{Z}^{p/2}([0,\tau_0],\mathbb R^e).
\end{displaymath}
So, the rough integral
\begin{displaymath}
H_2 :=
\int_{0}^{.}f(X_1(s))d\mathbf Z(s)
\end{displaymath}
is well defined. For every $(s,t)\in\Delta_T$,
\begin{eqnarray*}
 \|H_2(s,t)\| & \leqslant &
 \|f\|_{\textrm{Lip}^{\gamma}}\|Z\|_{p\textrm{-var},s,t} +
 \|f\|_{\textrm{Lip}^{\gamma}}^{2}\|\mathbb Z\|_{p/2\textrm{-var},s,t} +
 \mathfrak I_{f,\mathbf Z,X_n}(s,t)\\
 & \leqslant &
 (\|f\|_{\textrm{Lip}^{\gamma}}\vee
 \|f\|_{\textrm{Lip}^{\gamma}}^{2})\omega_{p,\mathbf Z}(s,t)^{1/p}\\
 & &
 + c(p)\|f\|_{\textrm{Lip}^{\gamma}}
 (1\vee\|f\|_{\textrm{Lip}^{\gamma}})(
 \|X_0\|_{p\textrm{-var},s,t} +\|X_1\|_{p\textrm{-var},s,t}\\
 & &
 + \|X_1\|_{p\textrm{-var},s,t}^{2} +\|R_{X_1}\|_{p/2\textrm{-var},s,t})\omega_{p,\mathbf Z}(s,t)^{1/p}\\
 & \leqslant &
 c_2(1 +\mu_C + M_R +\mu_{C}^{2})\omega_{p,\mathbf Z}(s,t)^{1/p},
\end{eqnarray*}
where $c_2 > 0$ is a constant depending only on $p$ and $\|f\|_{\textrm{Lip}^{\gamma}}$. By super-additivity of the control function $\omega_{p,\mathbf Z}$:
\begin{eqnarray*}
 \|H_2\|_{p\textrm{-var},\tau_0}
 & \leqslant &
 c_2(1 +\mu_C + M_R +\mu_{C}^{2})\omega_{p,\mathbf Z}(0,\tau_0)^{1/p}\\
 & \leqslant &
 m_C.
\end{eqnarray*}
So, by Proposition \ref{regularity_control_SD},
\begin{displaymath}
\|Y_2\|_{p\textrm{-var},\tau_0}
\leqslant
M_C
\end{displaymath}
and
\begin{displaymath}
\|X_2\|_{p\textrm{-var},\tau_0}
\leqslant
\|H_2\|_{p\textrm{-var},\tau_0} +\|Y_2\|_{p\textrm{-var},\tau_0}\\
\leqslant
\mu_C.
\end{displaymath}
Therefore, Conditions (\ref{local_control_2_1})-(\ref{local_control_2_2}) hold true for $n = 2$.
\\
\\
Assume that Conditions (\ref{local_control_2_1})-(\ref{local_control_2_2}) hold true until $n\in\mathbb N\backslash\{0,1\}$ arbitrarily chosen. Set $X_n' := f(X_{n - 1})$. For every $(s,t)\in\Delta_{\tau_0}$,
\begin{eqnarray*}
 R_{X_n}(s,t) & = & 
 X_n(s,t) - X_n'(s)Z(s,t)\\
 & = &
 Y_n(s,t) +
 \int_{s}^{t}f(X_{n - 1}(u))d\mathbf Z(u) - f(X_{n - 1}(s))Z(s,t).
\end{eqnarray*}
So, for every $(s,t)\in\Delta_{\tau_0}$,
\begin{eqnarray*}
 \|R_{X_n}(s,t)\| & \leqslant &
 \|Y_n(s,t)\| +\|Df(X_{n - 1}(s))f(X_{n - 2}(s))\mathbb Z(s,t)\| +
 \mathfrak I_{f,\mathbf Z,X_{n - 1}}(s,t)\\
 & \leqslant &
 \|Y_n\|_{p/2\textrm{-var},s,t} +
 \|f\|_{\textrm{Lip}^{\gamma}}^{2}\|\mathbb Z\|_{p/2\textrm{-var},s,t}\\
 & &
 + c(p)
 (\|Z\|_{p\textrm{-var},s,t}\|R_{f(X_{n - 1})}\|_{p/2\textrm{-var},s,t}\\
 & &
 + \|\mathbb Z\|_{p/2\textrm{-var},s,t}\|Df(X_{n - 1}(.))f(X_{n - 2})\|_{p\textrm{-var},s,t})\\
 & \leqslant &
 \|Y_n\|_{p/2\textrm{-var},s,t} +
 \|f\|_{\textrm{Lip}^{\gamma}}^{2}\|\mathbb Z\|_{p/2\textrm{-var},s,t}\\
 & &
 + c(p)\|f\|_{\textrm{Lip}^{\gamma}}
 (\|Z\|_{p\textrm{-var},\tau_0}\omega_n(s,t)^{2/p} +
 M(n,\tau_0)\|\mathbb Z\|_{p/2\textrm{-var},s,t}),
\end{eqnarray*}
where
\begin{eqnarray*}
 M(n,\tau_0) & := &
 \|f(X_{n - 2})\|_{p\textrm{-var},\tau_0} +\|f(X_{n - 2})\|_{\infty,\tau_0}\|X_{n - 1}\|_{p\textrm{-var},\tau_0}\\
 & \leqslant &
 \|f\|_{\textrm{Lip}^{\gamma}}(
 \|X_{n - 2}\|_{p\textrm{-var},\tau_0} +
 \|X_{n - 1}\|_{p\textrm{-var},\tau_0})
\end{eqnarray*}
and $\omega_n :\Delta_{\tau_0}\rightarrow\mathbb R_+$ is the control function defined by
\begin{displaymath}
\omega_n(u,v) :=
2^{p/2 - 1}(
\|X_{n - 1}\|_{p\textrm{-var},u,v}^{p} +\|R_{X_{n - 1}}\|_{p/2\textrm{-var},u,v}^{p/2})
\end{displaymath}
for every $(u,v)\in\Delta_{\tau_0}$. By super-additivity of the control functions
\begin{displaymath}
\|Y_n\|_{p/2\textrm{-var},.}^{p/2}
\textrm{, }\|\mathbb Z\|_{p/2\textrm{-var},.}^{p/2}
\textrm{ and }
\omega_n,
\end{displaymath}
there exist three constants $c_3,c_4,c_5 > 0$, depending only on $p$ and $\|f\|_{\textrm{Lip}^{\gamma}}$, such that
\begin{eqnarray*}
 \|R_{X_n}\|_{p/2\textrm{-var},\tau_0} & \leqslant &
 c_3
 (\|Y_n\|_{p/2\textrm{-var},\tau_0}^{p/2} +
 \|\mathbb Z\|_{p/2\textrm{-var},\tau_0}^{p/2}\\
 & &
 +\|Z\|_{p\textrm{-var},\tau_0}^{p/2}\omega_n(0,\tau_0) +
 M(n,\tau_0)^{p/2}\|\mathbb Z\|_{p/2\textrm{-var},\tau_0}^{p/2})^{2/p}\\
 & \leqslant &
 c_4(\|Y_n\|_{p/2\textrm{-var},\tau_0}^{p} +
 (1 +\omega_n(0,\tau_0) + M(n,\tau_0)^{p/2})^2\omega_{p,\mathbf Z}(0,\tau_0))^{1/p}\\
 & \leqslant &
 c_5(\mu_{C}^{p} + (1 +\mu_{C}^{p} +\|R_{X_{n - 1}}\|_{p/2\textrm{-var},\tau_0}^{p} +\mu_{C}^{2p})\omega_{p,\mathbf Z}(0,\tau_0))^{1/p}\\
 & \leqslant &
 c_5(\mu_{C}^{p} + (1 +\mu_{C}^{p} + M_{R}^{p} +\mu_{C}^{2p})\omega_{p,\mathbf Z}(0,\tau_0))^{1/p}.
\end{eqnarray*}
Then, $\|R_{X_n}\|_{p/2\textrm{-var},\tau_0}\leqslant c_5(\mu_{C}^{p} + 1)^{1/p}\leqslant M_R$ and
\begin{displaymath}
(X_n,f(X_{n - 1}))\in\mathfrak D_{Z}^{p/2}([0,\tau_0],\mathbb R^e).
\end{displaymath}
So, the rough integral
\begin{displaymath}
H_{n + 1} :=
\int_{0}^{.}f(X_n(s))d\mathbf Z(s)
\end{displaymath}
is well defined. For every $(s,t)\in\Delta_T$,
\begin{eqnarray*}
 \|H_{n + 1}(s,t)\| & \leqslant &
 \|f\|_{\textrm{Lip}^{\gamma}}\|Z\|_{p\textrm{-var},s,t} +
 \|f\|_{\textrm{Lip}^{\gamma}}^{2}\|\mathbb Z\|_{p/2\textrm{-var},s,t} +
 \mathfrak I_{f,\mathbf Z,X_n}(s,t)\\
 & \leqslant &
 (\|f\|_{\textrm{Lip}^{\gamma}}\vee
 \|f\|_{\textrm{Lip}^{\gamma}}^{2})\omega_{p,\mathbf Z}(s,t)^{1/p}\\
 & &
 + c(p)\|f\|_{\textrm{Lip}^{\gamma}}
 (1\vee\|f\|_{\textrm{Lip}^{\gamma}})(
 \|X_{n - 1}\|_{p\textrm{-var},s,t} +\|X_n\|_{p\textrm{-var},s,t}\\
 & &
 +\|X_n\|_{p\textrm{-var},s,t}^{2} +\|R_{X_n}\|_{p/2\textrm{-var},s,t})\omega_{p,\mathbf Z}(s,t)^{1/p}\\
 & \leqslant &
 c_6(1 +\mu_C + M_R +\mu_{C}^{2})\omega_{p,\mathbf Z}(s,t)^{1/p},
\end{eqnarray*}
where $c_6 > 0$ is a constant depending only on $p$ and $\|f\|_{\textrm{Lip}^{\gamma}}$. By super-additivity of the control function $\omega_{p,\mathbf Z}$:
\begin{eqnarray*}
 \|H_{n + 1}\|_{p\textrm{-var},\tau_0}
 & \leqslant &
 c_6(1 +\mu_C + M_R +\mu_{C}^{2})\omega_{p,\mathbf Z}(0,\tau_0)^{1/p}\\
 & \leqslant &
 m_C.
\end{eqnarray*}
So, by Proposition \ref{regularity_control_SD},
\begin{displaymath}
\|Y_{n + 1}\|_{p\textrm{-var},\tau_0}
\leqslant
M_C
\end{displaymath}
and
\begin{displaymath}
\|X_{n + 1}\|_{p\textrm{-var},\tau_0}
\leqslant
\|H_{n + 1}\|_{p\textrm{-var},\tau_0} +\|Y_{n + 1}\|_{p\textrm{-var},\tau_0}
\leqslant\mu_C.
\end{displaymath}
By induction, Conditions (\ref{local_control_2_1})-(\ref{local_control_2_2}) are satisfied for every $n\in\mathbb N\backslash\{0,1\}$. As in the proof of Theorem \ref{existence_Young}, the sequence $(H_n,X_n,Y_n)_{n\in\mathbb N\backslash\{0,1\}}$ is bounded in $\mathfrak C_{T}^{p,1}$. In addition, the sequence $(R_{X_n})_{n\in\mathbb N\backslash\{0,1\}}$ is bounded in $C^{p/2\textrm{-var}}([0,T],\mathbb R^e)$.
\\
\\
For every $n\in\mathbb N\backslash\{0,1\}$ and $(s,t)\in\Delta_T$,
\begin{eqnarray*}
 \|H_n(s,t)\| & \leqslant &
 (\|f\|_{\textrm{Lip}^{\gamma}}\vee
 \|f\|_{\textrm{Lip}^{\gamma}}^{2} \\
 & &
 + c(p)\|f\|_{\textrm{Lip}^{\gamma}}
 (1\vee\|f\|_{\textrm{Lip}^{\gamma}})
 (\sup_{n\in\mathbb N}
 \|X_{n - 2}\|_{p\textrm{-var},T} +\sup_{n\in\mathbb N}\|X_{n - 1}\|_{p\textrm{-var},T}\\
 & &
 +\sup_{n\in\mathbb N}\|X_{n - 1}\|_{p\textrm{-var},T}^{2} +
 \sup_{n\in\mathbb N}\|R_{X_{n - 1}}\|_{p/2\textrm{-var},T}))\omega_{p,\mathbf Z}(s,t)^{1/p}.
\end{eqnarray*}
Since $\omega_{p,\mathbf Z}$ is a control function, $(H_n)_{n\in\mathbb N\backslash\{0,1\}}$ is equicontinuous. Therefore, by Arzel\`a-Ascoli's theorem together with Proposition \ref{uniform_convergence_p_variation}, there exists an extraction $\varphi :\mathbb N\backslash\{0,1\}\rightarrow\mathbb N\backslash\{0,1\}$ such that $(H_{\varphi(n)})_{n\in\mathbb N\backslash\{0,1\}}$ converges uniformly to an element $H$ of $C^{p\textrm{-var}}([0,T],\mathbb R^e)$.
\\
\\
Since $(H_{\varphi(n)})_{n\in\mathbb N\backslash\{0,1\}}$ converges uniformly to $H$, by Theorem \ref{PRF_key_result}, the sequence $(X_{\varphi(n)},Y_{\varphi(n)})_{n\in\mathbb N\backslash\{0,1\}}$ converges uniformly to $(X,Y) := (v_H,w_H)$. So, for every $t\in [0,T]$,
\begin{displaymath}
\left\{
\begin{array}{rcl}
 X(t) & = & H(t) + Y(t)\\
 -\displaystyle{\frac{
 dDY}{d|DY|}}(t) & \in & N_{C_H(t)}(Y(t))
 \textrm{ $|DY|$-a.e. with }Y(0) = a,
\end{array}
\right.
\end{displaymath}
and by Proposition \ref{uniform_convergence_p_variation},
\begin{displaymath}
X\in C^{p\textrm{-var}}([0,T],\mathbb R^e)
\textrm{ and }
Y\in C^{1\textrm{-var}}([0,T],\mathbb R^e).
\end{displaymath}
Denoting $X' := f(X)$, $X'$ (resp.~$R_X$) is the uniform limit of $(X_{\varphi(n)}')_{n\in\mathbb N\backslash\{0,1\}}$ (resp.~$(R_{X_{\varphi(n)}})_{n\in\mathbb N\backslash\{0,1\}}$). So, by Proposition \ref{continuity_rough_integral_composition}:
\begin{displaymath}
\lim_{n\rightarrow\infty}
\left\|H_{\varphi(n)} -
\int_{0}^{.}f(X(s))d\mathbf Z(s)\right\|_{\infty,T} = 0.
\end{displaymath}
Therefore, since $(H_{\varphi(n)})_{n\in\mathbb N^*}$ converges also to $H$ in $C^0([0,T],\mathbb R^e)$,
\begin{displaymath}
H(t) =
\int_{0}^{t}f(X(s))d\mathbf Z(s)
\textrm{ $;$ }
\forall t\in [0,T].
\end{displaymath}
\end{proof}
%


%
\section{Some uniqueness results}
When $p = 1$ and there is an additive continuous signal of finite $q$-variation with $q\in [1,3[$, the uniqueness of the solution to Problem (\ref{rough_sweeping_process}) is established in Proposition
\ref{uniqueness_RS} below. Proposition \ref{SC_Young_uniqueness} and Proposition \ref{SC_rough_uniqueness} provide necessary and sufficient conditions for uniqueness of the solution when $p\in [1,2[$ and $p\in [2,3[$ respectively. These conditions are close to the monotonicity of the normal cone which allows to prove the uniqueness when $p = 1$ (see Proposition \ref{uniqueness_RS}).
%


%
\begin{proposition}\label{uniqueness_RS}
Assume that $p = 1$ and consider the Skorokhod problem
\begin{equation}\label{sweeping_process_additif}
\left\{
\begin{array}{rcl}
 X(t) & = & H(t) + Y(t)\\
 H(t) & = & \displaystyle{\int_{0}^{t}f(X(s))dZ(s)} + W(t)\\
 -\displaystyle{\frac{dDY}{d|DY|}(t)} & \in & N_{C_H(t)}(Y(t))\normalfont{\textrm{ $|DY|$-a.e. with $Y(0) = a$,}}
\end{array}\right.
\end{equation}
where $W\in C^{q\normalfont{\textrm{-var}}}([0,T],\mathbb R^e)$ with $q\in [1,3[$. Under Assumption \ref{assumption_C}, Problem (\ref{sweeping_process_additif}) has a unique solution which belongs to $C^{q\normalfont{\textrm{-var}}}([0,T],\mathbb R^e)$.
\end{proposition}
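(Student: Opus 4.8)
The existence part follows by adapting the approximation scheme used to prove Theorem \ref{existence_Young}. The plan is to run the Picard-type iteration with $H_n(t) := \int_0^t f(X_{n - 1}(s))\,dZ(s) + W(t)$ and $Y_n := w_{H_n}$, initialised as in (\ref{Skorokhod_problem_discretized_1_initial}). Since $p = 1$, the signal $Z$ has bounded variation, so every Young integral $\int_0^\cdot f(X_{n - 1})\,dZ$ has finite $1$-variation, while $W$ carries the finite $q$-variation; hence each $H_n$ lies in $C^{q\textrm{-var}}([0,T],\mathbb R^e)$. The only genuinely new point compared with Theorem \ref{existence_Young} is that the subintervals of $[0,T]$ must be chosen small enough that both $\|Z\|_{1\textrm{-var},\tau_k,\tau_{k + 1}}$ and $\|W\|_{q\textrm{-var},\tau_k,\tau_{k + 1}}$ are small, which is possible because $(s,t) \mapsto \|Z\|_{1\textrm{-var},s,t}$ and $(s,t) \mapsto \|W\|_{q\textrm{-var},s,t}$ are continuous and vanish on the diagonal. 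With this choice the a priori bounds furnished by Proposition \ref{regularity_control_SD} close by induction, and Arzel\`a--Ascoli together with Theorem \ref{PRF_key_result} and Proposition \ref{continuity_Young_integral} yields a solution in $C^{q\textrm{-var}}([0,T],\mathbb R^e)$.

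For uniqueness I would take two solutions $(X_1,H_1,Y_1)$ and $(X_2,H_2,Y_2)$ and put $g := X_1 - X_2$. The decisive observation is that the additive signal $W$ is common to both solutions, so it cancels in $H_1 - H_2 = \int_0^\cdot (f(X_1) - f(X_2))\,dZ$. Because $Z$ has bounded variation, this difference has finite $1$-variation; since $Y_1 - Y_2$ is also of bounded variation, $g = (H_1 - H_2) + (Y_1 - Y_2)$ is a continuous function of bounded variation with $g(0) = 0$. This is exactly where the hypothesis $p = 1$ enters: for $p > 1$ the integral term would only have finite $p$-variation and $g$ would carry no differential measure.

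Next I would use the monotonicity of the normal cone. Since $X_i(t) \in C(t)$ for every $t$ and $-dDY_i/d|DY_i| \in N_{C(t)}(X_i)$ $|DY_i|$-a.e., inserting the admissible competitor $X_2(u) \in C(u)$ (resp.\ $X_1(u) \in C(u)$) into the defining inequality of the normal cone gives, as signed measures on $[0,T]$, $\langle g, dDY_1\rangle \leqslant 0$ and $\langle g, dDY_2\rangle \geqslant 0$. Subtracting and integrating over $[0,t]$ yields $\int_0^t \langle g, dD(Y_1 - Y_2)\rangle \leqslant 0$. Writing $D(Y_1 - Y_2) = Dg - (f(X_1) - f(X_2))\,dZ$ and applying the chain rule for continuous functions of bounded variation, $\int_0^t \langle g(u), dDg(u)\rangle = \tfrac12\|g(t)\|^2$, I obtain
$$\frac{1}{2}\|g(t)\|^2 \leqslant \int_0^t \langle g(u), (f(X_1(u)) - f(X_2(u)))\,dZ(u)\rangle .$$

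Finally, since $\gamma > 1$ the map $f$ is Lipschitz, so $\|f(X_1(u)) - f(X_2(u))\| \leqslant \|f\|_{\textrm{Lip}}\|g(u)\|$ and therefore
$$\|g(t)\|^2 \leqslant 2\|f\|_{\textrm{Lip}} \int_0^t \|g(u)\|^2 \, d|DZ|(u) .$$
As $Z$ is continuous, $|DZ|$ is a finite atomless measure and $g(0) = 0$, so Gronwall's lemma for measures forces $g \equiv 0$, i.e.\ $X_1 = X_2$; the remaining components then coincide. I expect the main obstacle to be the rigorous justification of the chain rule $\int_0^t \langle g, dDg\rangle = \tfrac12\|g(t)\|^2$ for the continuous bounded-variation function $g$, and the passage from the $|DY_i|$-a.e.\ inclusions to the integrated measure inequalities, both of which require care because the competitor $X_{3 - i}(u)$ varies with $u$; these are, however, standard in the Moreau--Valadier framework underlying Proposition \ref{characterization_sweeping_process}.
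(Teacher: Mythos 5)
Your proof is correct, and the uniqueness argument, while resting on the same cornerstone as the paper's --- the monotonicity of $x\mapsto N_{C(t)}(x)$, which yields $\int_0^t\langle X_1-X_2,\,dD(Y_1-Y_2)\rangle\leqslant 0$ --- reaches the conclusion by a genuinely different route. The paper never forms the measure $Dg$ of $g=X_1-X_2$: it expands $\|X(t)-X^*(t)\|^2$ into three terms $m_1,m_2,m_3$, bounds $m_1$ and $m_3$ by $\|X-X^*\|_{\infty,\tau_1}^2$ on subintervals chosen so that $\|Z\|_{1\textrm{-var},\tau_k,\tau_{k+1}}\leqslant(4\|f\|_{\textrm{Lip}^{\gamma}})^{-1}$, obtains the contraction $\|X-X^*\|_{\infty}^2\leqslant\tfrac{11}{16}\|X-X^*\|_{\infty}^2$ on each piece, and propagates by induction. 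You instead exploit the fact that $g$ itself is continuous of bounded variation (the point where $p=1$ enters, as you correctly isolate), apply the chain rule $\int_0^t\langle g,dDg\rangle=\tfrac12\|g(t)\|^2$, and land on the closed inequality $\|g(t)\|^2\leqslant 2\|f\|_{\textrm{Lip}}\int_0^t\|g\|^2\,d|DZ|$, which Gronwall's lemma for atomless measures kills globally in one step. Your version dispenses with the subdivision and the explicit numerical constants, at the price of the two measure-theoretic lemmas you flag (the chain rule for continuous BV functions and measure Gronwall), both of which are indeed standard in the Moreau--Valadier calculus of differential measures; the paper's version needs only elementary Stieltjes estimates. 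One further remark: the paper's proof of this proposition addresses only uniqueness, existence being obtained elsewhere by rewriting the additive problem as Problem (\ref{rough_sweeping_process}) for the augmented signal $(t,Z(t),W(t))$ and invoking Theorems \ref{existence_Young} and \ref{existence_rough} (this is how Section 6 proceeds); your direct adaptation of the Picard scheme with $W$ carried along additively is a legitimate alternative and is consistent with the localisation on intervals where both $\|Z\|_{1\textrm{-var}}$ and $\|W\|_{q\textrm{-var}}$ are small.
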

%


%
\begin{proof}
Consider two solutions $(X,Y)$ and $(X^*,Y^*)$ of Problem (\ref{rough_sweeping_process}) on $[0,T]$. Since $(s,t)\in\Delta_T\mapsto\|Z\|_{1\textrm{-var},s,t}$ is a control function, there exists $n\in\mathbb N^*$ and $(\tau_k)_{k\in\llbracket 0,n\rrbracket}\in\mathfrak D_{[0,T]}$ such that
\begin{equation}\label{control_uniqueness_RS}
\|Z\|_{1\textrm{-var},\tau_k,\tau_{k + 1}}\leqslant
M :=\frac{1}{4\|f\|_{\textrm{Lip}^{\gamma}}}
\textrm{ $;$ }
\forall k\in\llbracket 0,n - 1\rrbracket.
\end{equation}
For every $t\in [0,\tau_1]$,
\begin{eqnarray*}
 \|X(t) - X^*(t)\|^2 & = &
 \|H(t) - H^*(t)\|^2 +
 2\int_{0}^{t}\langle Y(s) - Y^*(s),d(Y - Y^*)(s)\rangle\\
 & &
 + 2\int_{0}^{t}\langle H(t) - H^*(t),d(Y - Y^*)(s)\rangle\\
 & \leqslant &
 m_1(\tau_1)^2 + 2m_2(t) + 2m_3(t),
\end{eqnarray*}
with $m_1(\tau_1) :=\|H - H^*\|_{\infty,\tau_1}$,
\begin{displaymath}
m_2(t) :=\int_{0}^{t}\langle X(s) - X^*(s),d(Y - Y^*)(s)\rangle,
\end{displaymath}
and
\begin{displaymath}
m_3(t) :=\int_{0}^{t}\langle H(t) - H^*(t) - (H(s) - H^*(s)),d(Y - Y^*)(s)\rangle.
\end{displaymath}
Consider $t\in [0,\tau_1]$. By Friz and Victoir \cite{FV10}, Proposition 2.2:
\begin{eqnarray*}
 \|H(t) - H^*(t)\|
 & = &
 \left\|\int_{0}^{t}(f(X(s)) - f(X^*(s)))dZ(s)\right\|\\
 & \leqslant &
 \|f\|_{\textrm{Lip}^{\gamma}}
 \|X - X^*\|_{\infty,\tau_1}\|Z\|_{1\textrm{-var},\tau_1}.
\end{eqnarray*}
So,
\begin{equation}\label{control_A}
 m_1(\tau_1)\leqslant
 \frac{1}{4}
 \|X - X^*\|_{\infty,\tau_1}.
\end{equation}
Since the map $x\in C(t)\mapsto N_{C(t)}(x)$ is monotone, $m_2(t)\leqslant 0$. By the integration by parts formula,
\begin{eqnarray*}
 m_3(t) & = &
 \int_{0}^{t}
 \langle Y(s) - Y^*(s),
 d(H - H^*)(s)\rangle\\
 & = &
 \int_{0}^{t}
 \langle X(s) - X^*(s) - (H(s) - H^*(s)),
 (f(X(s)) - f(X^*(s)))dZ(s)\rangle.
\end{eqnarray*}
So, by Friz and Victoir \cite{FV10}, Proposition 2.2 and Inequality (\ref{control_A}),
\begin{eqnarray*}
 m_3(t)
 & \leqslant &
 \|Df\|_{\infty}
 \|X - X^*\|_{\infty,\tau_1}
 (\|X - X^*\|_{\infty,\tau_1} +\|H - H^*\|_{\infty,\tau_1})\|Z\|_{1\textrm{-var},\tau_1}\\
 & \leqslant &
 \|f\|_{\textrm{Lip}^{\gamma}}\|Z\|_{1\textrm{-var},\tau_1}
 (1 +\|f\|_{\textrm{Lip}^{\gamma}}\|Z\|_{1\textrm{-var},\tau_1})\|X - X^*\|_{\infty,\tau_1}^{2}\\
 & \leqslant &
 5/16\|X - X^*\|_{\infty,\tau_1}^{2}.
\end{eqnarray*}
Therefore,
\begin{displaymath}
\|X - X^*\|_{\infty,\tau_1}^{2}
\leqslant
\frac{11}{16}\|X - X^*\|_{\infty,\tau_1}^{2}.
\end{displaymath}
Necessarily, $(X,Y) = (X^*,Y^*)$ on $[0,\tau_1]$.
\\
\\
For $k\in\llbracket 0,n - 1\rrbracket$, assume that $(X,Y) = (X^*,Y^*)$ on $[0,\tau_k]$.
By Equation (\ref{control_uniqueness_RS}) and exactly the same ideas as on $[0,\tau_1]$:
\begin{displaymath}
\|X - X^*\|_{\infty,\tau_k,\tau_{k + 1}}^{2}
\leqslant
\frac{11}{16}\|X - X^*\|_{\infty,\tau_k,\tau_{k + 1}}^{2}.
\end{displaymath}
So, $(X,Y) = (X^*,Y^*)$ on $[0,\tau_{k + 1}]$. Recursively, Problem (\ref{rough_sweeping_process}) has a unique solution on $[0,T]$.
\end{proof}
\noindent
\textbf{Remark.} The cornerstone of the proof of Proposition \ref{uniqueness_RS} is that
\begin{equation}\label{cornerstone_uniqueness_RS}
\int_{0}^{t}\langle X(s) - X^*(s),d(Y - Y^*)(s)\rangle
\leqslant 0
\textrm{ $;$ }
\forall t\in [0,T].
\end{equation}
Thanks to the monotonicity of the map $x\in C(t)\mapsto N_{C(t)}(x)$
($t\in [0,T]$), Inequality (\ref{cornerstone_uniqueness_RS}) is
true. When $p\in ]1,3[$, it is not possible to get inequalities
involving only the uniform norm of $X - X^*$.
In that case, the construction of the Young/rough integral suggests to
use ideas similar to those of the proof of Proposition
\ref{uniqueness_RS}, but using the $p$-variation norm of $X - X^*$. 
\\
\\
In a probabilistic setting, uniqueness up to equality almost everywhere
can be obtained for Brownian motion, with $p>2$, 
in the frame of It\^o calculus, using the martingale property of
stochastic integrals and Doob's inequality, see 
\cite{TANAKA,LS84,SAISHO} for a fixed convex set $C$ and 
\cite{BV11,CMR15} for a moving set. 
\\
\\
The two following propositions show that when $p\in ]1,3[$, there exist
some conditions close to Inequality (\ref{cornerstone_uniqueness_RS}),
ensuring the uniqueness of the solution to Problem
(\ref{rough_sweeping_process}).

\begin{proposition}\label{SC_Young_uniqueness}
Consider $(s,t)\in\Delta_T$, $p\in [1,2[$ and two solutions $(X,Y)$
and $(X^*,Y^*)$ to Problem (\ref{rough_sweeping_process}) under
Assumption \ref{assumption_C}. On $[s,t]$, $(X,Y) = (X^*,Y^*)$ if and only if $X(s) = X^*(s)$ and
\begin{equation}\label{eq:inegfond}
\int_{u}^{v}
\langle X(u,r) - X^*(u,r),d(Y - Y^*)(r)\rangle
\leqslant 0
\textrm{ $;$ }
\forall (u,v)\in\Delta_{s,t}.
\end{equation}
\end{proposition}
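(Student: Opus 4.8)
The ``only if'' direction is immediate: if the two solutions coincide on $[s,t]$ then in particular $X(s)=X^*(s)$, and both the integrand and the integrator in (\ref{eq:inegfond}) vanish, so the inequality holds with equality. The substance is the converse, on which I would spend the whole proof. Write $\delta X := X - X^*$, $\delta Y := Y - Y^*$ and $\delta H := H - H^*$, so that $\delta X = \delta H + \delta Y$, with $\delta Y$ continuous of bounded variation and $\delta H(u,v) = \int_u^v(f(X)-f(X^*))\,dZ$ a Young integral. I want to prove $\delta X\equiv 0$ on $[s,t]$. Since $(u,v)\mapsto\|Z\|_{p\textrm{-var},u,v}$ is a control function, I first cut $[s,t]$ into finitely many blocks $[\tau_k,\tau_{k+1}]$ on which $\|Z\|_{p\textrm{-var},\tau_k,\tau_{k+1}}$ is as small as I please, and argue by induction on $k$, starting from $X(s)=X^*(s)$. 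Once $\delta X\equiv 0$ on $[s,\tau_k]$, both $\delta H$ and $\delta Y$ are constant there, so on the next block I may take the base point $u=\tau_k$ where $\delta X(u)=0$.

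The heart of the argument is a squared-increment identity. For $(u,v)\in\Delta_{s,t}$, expanding $\delta X(u,v)=\delta H(u,v)+\delta Y(u,v)$ gives
\[
\|\delta X(u,v)\|^2 = \|\delta H(u,v)\|^2 + 2\langle \delta H(u,v),\delta Y(u,v)\rangle + \|\delta Y(u,v)\|^2 .
\]
Using the chain rule for the continuous bounded-variation path $\delta Y$ I rewrite $\|\delta Y(u,v)\|^2 = 2\int_u^v\langle\delta Y(u,r),d\delta Y(r)\rangle$, and inside $\langle\delta H(u,v),\delta Y(u,v)\rangle=\int_u^v\langle\delta H(u,v),d\delta Y(r)\rangle$ I split $\delta H(u,v)=\delta H(u,r)+\delta H(r,v)$. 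The pieces carrying $\delta H(u,r)$ and $\delta Y(u,r)$ recombine, through $\delta H(u,r)+\delta Y(u,r)=\delta X(u,r)$, into exactly the term $\int_u^v\langle \delta X(u,r),d\delta Y(r)\rangle$ of hypothesis (\ref{eq:inegfond}), which is therefore $\leqslant 0$; this is precisely why the hypothesis is stated with increments $X(u,r)-X^*(u,r)$ from the base point. Discarding this nonpositive term leaves
\[
\|\delta X(u,v)\|^2 \leqslant \|\delta H(u,v)\|^2 + 2\int_u^v\langle \delta H(r,v),d\delta Y(r)\rangle .
\]

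It then remains to control the residual integral and $\|\delta H(u,v)\|^2$. The danger is that $\delta Y$ carries only bounded-variation control, not smallness, so a crude bound by $\|\delta Y\|_{1\textrm{-var}}$ would leave an error only linear in $\delta X$. To cure this I would integrate by parts: since $r\mapsto\delta H(r,v)=\delta H(v)-\delta H(r)$ vanishes at $v$ and $\delta Y(u,\cdot)$ vanishes at $u$, the boundary terms drop and $\int_u^v\langle\delta H(r,v),d\delta Y(r)\rangle=\int_u^v\langle\delta Y(u,r),d\delta H(r)\rangle$. As $d\delta H=(f(X)-f(X^*))\,dZ$, this is a Young integral whose integrand $r\mapsto\delta Y(u,r)^{\top}(f(X(r))-f(X^*(r)))$ vanishes at $u$ and, since $\delta Y(u,\cdot)=\delta X(u,\cdot)-\delta H(u,\cdot)$ and $f\in\textrm{Lip}^{\gamma}$, is a product of two factors each controlled by $\delta X$. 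Proposition \ref{Young_integral} then bounds it by $c(p,p)\|Z\|_{p\textrm{-var},u,v}$ times a quantity quadratic in the $p$-variation and uniform norms of $\delta X$ over $[u,v]$, and the same estimate governs $\|\delta H(u,v)\|^2$. Taking $u=\tau_k$, passing to the supremum over $v$, and running the increment estimate over arbitrary subintervals to recover a bound on $\|\delta X\|_{p\textrm{-var},\tau_k,\tau_{k+1}}$, I reach an inequality of the form $\|\delta X\|^2\leqslant\theta\,\|\delta X\|^2$ on the block with $\theta<1$ as soon as $\|Z\|_{p\textrm{-var},\tau_k,\tau_{k+1}}$ is small, forcing $\delta X\equiv 0$ there and closing the induction.

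The main obstacle I anticipate is exactly this final bookkeeping: forcing every error term into a form quadratic in $\delta X$ with a coefficient proportional to the small quantity $\|Z\|_{p\textrm{-var}}$, while the reflection increment $\delta Y$ supplies only bounded-variation control. The integration-by-parts step, which trades the bad integrator $d\delta Y$ for $d\delta H$ and lets $\delta Y$ enter only through the uniform and $p$-variation norms of its increments, is the decisive manoeuvre and mirrors the handling of $m_3$ in the proof of Proposition \ref{uniqueness_RS}. One must also be careful, in passing from the pointwise squared-increment bound to a genuine bound on $\|\delta X\|_{p\textrm{-var}}$ over a block, to invoke super-additivity of the relevant control functions rather than summing naively.
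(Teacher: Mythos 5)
Your proposal is correct and follows essentially the same route as the paper's proof: the same squared-increment expansion that isolates the hypothesised nonpositive term $\int_u^v\langle X(u,r)-X^*(u,r),d(Y-Y^*)(r)\rangle$, the same reduction of the residual term to a quantity quadratic in $X-X^*$ carrying a small factor $\|Z\|_{p\textrm{-var}}$, and the same localisation-and-contraction endgame via super-additivity of the relevant controls. The only cosmetic difference is that you treat the residual $\int_u^v\langle \delta H(r,v),d\delta Y(r)\rangle$ by integrating by parts so as to integrate against $d\delta H$ (as in the $p=1$ case), whereas the paper applies the Young estimate directly with integrator $d(Y-Y^*)$ and then substitutes $Y-Y^*=(X-X^*)-(H-H^*)$; both yield the same quadratic bound.
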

%


%
\begin{proof}
For the sake of simplicity, the proposition is proved on $[0,T]$ instead of
$[s,t]$, with $(s,t)\in\Delta_T$. 
\\
\\
First of all, if $(X,Y) = (X^*,Y^*)$ on $[s,t]$, then
\begin{displaymath}
\int_{u}^{v}
\langle X(u,r) - X^*(u,r),d(Y - Y^*)(r)\rangle = 0
\textrm{ $;$ }
\forall (u,v)\in\Delta_{s,t}.
\end{displaymath}
Now, let us prove that if $X(s) = X^*(s)$ and Inequality (\ref{eq:inegfond}) is true, then $(X,Y) = (X^*,Y^*)$.
\\
\\
For every $(s,t)\in\Delta_T$,
\begin{eqnarray*}
 \|X(s,t) - X^*(s,t)\|^2 & = &
 \|H(s,t) - H^*(s,t)\|^2\\
 & &
 + 2\int_{s}^{t}\langle Y(s,u) - Y^*(s,u),d(Y - Y^*)(u)\rangle\\
 & &
 + 2 \int_{s}^{t}\langle H(s,t) - H^*(s,t),d(Y - Y^*)(u)\rangle\\
 & = &
 \|H(s,t) - H^*(s,t)\|^2\\
 & &
 + 2\int_{s}^{t}\langle X(s,u) - X^*(s,u),d(Y - Y^*)(u)\rangle\\
 & &
 + 2\int_{s}^{t}\langle H(s,t) - H(s,u) - (H^*(s,t) - H^*(s,u)),d(Y - Y^*)(u)\rangle\\
 & \leqslant &
 \|H - H^*\|_{p\textrm{-var},s,t}^{2} + 2m(s,t)
\end{eqnarray*}
with
\begin{displaymath}
m(s,t) :=
\int_{s}^{t}\langle H(u,t) - H^*(u,t),d(Y - Y^*)(u)\rangle.
\end{displaymath}
Let $(s,t)\in\Delta_T$ be arbitrarily chosen.
\\
\\
On the one hand,
\begin{displaymath}
m(s,t)\leqslant
2e\cdot c(p,p)\|H - H^*\|_{p\textrm{-var},s,t}
\|Y - Y^*\|_{p\textrm{-var},s,t}.
\end{displaymath}
So, there exists a constant $c_1 > 0$, not depending on $s$ and $t$, such that
\begin{eqnarray*}
 \|X(s,t) - X^*(s,t)\|^p
 & \leqslant &
 (\|H - H^*\|_{p\textrm{-var},s,t}^{2} + 2m(s,t))^{p/2}\\
 & \leqslant &
 c_1(\|H - H^*\|_{p\textrm{-var},s,t}^{p}\\
 & &
 + (\|H - H^*\|_{p\textrm{-var},s,t}^{p})^{1/2}
 (\|Y - Y^*\|_{p\textrm{-var},s,t}^{p})^{1/2}).
\end{eqnarray*}
Since $1/2 + 1/2  = 1$, the right-hand side of the previous inequality defines a control function (see Friz and Victoir \cite{FV10}, Exercice 1.9), and then there exists a constant $c_2 > 0$, not depending on $s$ and $t$, such that
\begin{eqnarray}
 \|X - X^*\|_{p\textrm{-var},s,t}^{p}
 & \leqslant &
 c_1(\|H - H^*\|_{p\textrm{-var},s,t}^{p} +
 \|H - H^*\|_{p\textrm{-var},s,t}^{p/2}
 \|Y - Y^*\|_{p\textrm{-var},s,t}^{p/2})
 \nonumber\\
 \label{control_couple_solutions_1}
 & \leqslant &
 c_2(\|H - H^*\|_{p\textrm{-var},s,t}^{p} +\|H - H^*\|_{p\textrm{-var},s,t}^{p/2}\|X - X^*\|_{p\textrm{-var},s,t}^{p/2}).
\end{eqnarray}
The right-hand side of the previous inequality defines a control function.
\\
\\
On the other hand, since $X(0) = X^*(0)$:
\begin{eqnarray*}
 \|H - H^*\|_{p\textrm{-var},s,t} & \leqslant &
 c(p,p)\|Z\|_{p\textrm{-var},s,t}(\|f\circ X - f\circ X^*\|_{p\textrm{-var},s,t}\\
 & &
 +\|f(X(s)) - f(X(0)) - (f(X^*(s)) - f(X^*(0)))\|)\\
 & \leqslant &
 2c(p,p)\|Z\|_{p\textrm{-var},s,t}\|f\circ X - f\circ X^*\|_{p\textrm{-var},t}.
\end{eqnarray*}
Consider $(u,v)\in\Delta_t$ and
\begin{displaymath}
\delta(u,v) :=
\|(f\circ X)(u,v) - (f\circ X^*)(u,v)\|.
\end{displaymath}
Applying Taylor's formula to the map $f$ between $X(u,v)$ and \mbox{$X^*(u,v)$:}
\begin{eqnarray*}
 \delta(u,v) & \leqslant &
 \left\|\int_{0}^{1}Df(X(u) +\theta X(u,v))(X(u,v) - X^*(u,v))d\theta\right\|\\
 & &
 +\left\|\int_{0}^{1}(Df(X(u) +\theta X(u,v)) - Df(X^*(u) +\theta X^*(u,v))) X^*(u,v)d\theta\right\|\\
 & \leqslant &
 \|f\|_{\textrm{Lip}^{\gamma}}(
 \|X - X^*\|_{p\textrm{-var},u,v} +
 2\|X^*\|_{p\textrm{-var},u,v}
 \|X - X^*\|_{p\textrm{-var},t}).
\end{eqnarray*}
So, there exists a constant $c_3 > 0$, not depending on $t$, such that
\begin{displaymath}
\|f\circ X - f\circ X^*\|_{p\textrm{-var},t}
\leqslant
c_3\|X - X^*\|_{p\textrm{-var},t},
\end{displaymath}
and then there exists a constant $c_4 > 0$, not depending on $s$ and $t$, such that
\begin{equation}\label{control_couple_solutions_2}
\|H - H^*\|_{p\textrm{-var},s,t}
\leqslant
c_4\|Z\|_{p\textrm{-var},s,t}\|X - X^*\|_{p\textrm{-var},t}.
\end{equation}
By Equation (\ref{control_couple_solutions_1}) and Equation (\ref{control_couple_solutions_2}) together, there exists a constant $c_5 > 0$, not depending on $s$ and $t$, such that
\begin{displaymath}
\|X - X^*\|_{p\textrm{-var},s,t}\leqslant
c_5\|Z\|_{p\textrm{-var},s,t}^{1/2}\|X - X^*\|_{p\textrm{-var},t}.
\end{displaymath}
Since $(u,v)\in\Delta_T\mapsto\|Z\|_{p\textrm{-var},u,v}^{p}$ is a control function, there exists $N\in\mathbb N^*$ and $(\tau_k)_{k\in\llbracket 0,N\rrbracket}\in\mathfrak D_{[0,T]}$ such that
\begin{displaymath}
\|Z\|_{p\textrm{-var},\tau_k,\tau_{k + 1}}
\leqslant\frac{1}{4c_{5}^{2}}
\textrm{ $;$ }
\forall k\in\llbracket 0,N - 1\rrbracket.
\end{displaymath}
First,
\begin{eqnarray*}
 \|X - X^*\|_{p\textrm{-var},\tau_1}
 & \leqslant &
 c_5\|Z\|_{p\textrm{-var},\tau_1}^{1/2}\|X - X^*\|_{p\textrm{-var},\tau_1}\\
 & \leqslant &
 \frac{1}{2}\|X - X^*\|_{p\textrm{-var},\tau_1}.
\end{eqnarray*}
So, $X = X^*$ on $[0,\tau_1]$. For $k\in\llbracket 1,N - 1\rrbracket$, assume that $X = X^*$ on $[0,\tau_k]$. Then,
\begin{eqnarray*}
 \|X - X^*\|_{p\textrm{-var},\tau_{k + 1}} & = &
 \|X - X^*\|_{p\textrm{-var},\tau_k,\tau_{k + 1}}\\
 & \leqslant &
 \frac{1}{2}\|X - X^*\|_{p\textrm{-var},\tau_{k + 1}}.
\end{eqnarray*}
So, $X = X^*$ on $[0,\tau_{k + 1}]$. Recursively, $X = X^*$ on $[0,T]$.
\end{proof}

\begin{proposition}\label{SC_rough_uniqueness}
Consider $(s,t)\in\Delta_T$, $p\in [2,3[$ and two solutions $(X,Y)$
and $(X^*,Y^*)$ to Problem (\ref{rough_sweeping_process}) under
Assumption \ref{assumption_C}. On $[s,t]$, $(X,Y) = (X^*,Y^*)$ if and only if $X(s) = X^*(s)$ and
\begin{equation}\label{eq:inegfondrough}
\int_{u}^{v}
\langle R_X(u,r) - R_{X^*}(u,r),d(Y - Y^*)(r)\rangle
\leqslant 0
\textrm{ $;$ }
\forall (u,v)\in\Delta_{s,t}.
\end{equation}
\end{proposition}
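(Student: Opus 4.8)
The plan is to follow the proof of Proposition~\ref{SC_Young_uniqueness} step by step, the single new feature being that the hypothesis now concerns the Gubinelli remainders $R_X,R_{X^*}$ instead of $X,X^*$ themselves. The ``only if'' implication is immediate: if $(X,Y)=(X^*,Y^*)$ on $[s,t]$ then $R_X=R_{X^*}$ and $Y-Y^*$ is constant there, so the left-hand side of~(\ref{eq:inegfondrough}) vanishes and $X(s)=X^*(s)$ trivially. For the converse I would assume $X(s)=X^*(s)$ together with~(\ref{eq:inegfondrough}) and work on a generic subinterval of $[s,t]$, writing the argument on $[0,T]$ for readability, as in the Young case.

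Starting from $X=H+Y$ and integrating by parts against the bounded-variation map $Y-Y^*$, I would obtain for every $(u,v)\in\Delta_T$ the identity $\|X(u,v)-X^*(u,v)\|^2=\|H(u,v)-H^*(u,v)\|^2+2\int_u^v\langle X(u,r)-X^*(u,r),d(Y-Y^*)(r)\rangle+2m(u,v)$ with $m(u,v):=\int_u^v\langle H(r,v)-H^*(r,v),d(Y-Y^*)(r)\rangle$, exactly as in Proposition~\ref{SC_Young_uniqueness}. The decisive step is the treatment of the middle integral. Using the controlled-path expansion of Definition~\ref{controlled_rough_paths} with Gubinelli derivative $f(X)$, namely $X(u,r)-X^*(u,r)=[f(X(u))-f(X^*(u))]Z(u,r)+[R_X(u,r)-R_{X^*}(u,r)]$, I would split it into a remainder contribution, which is $\leqslant 0$ by~(\ref{eq:inegfondrough}), and a correction bounded by $\|f\|_{\textrm{Lip}^{\gamma}}\,\|X(u)-X^*(u)\|\,\|Z\|_{p\textrm{-var},u,v}\,\|Y-Y^*\|_{1\textrm{-var},u,v}$. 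Since $X=X^*$ holds at the left endpoint of the current interval, $\|X(u)-X^*(u)\|\leqslant\|X-X^*\|_{p\textrm{-var}}$ there, so this correction carries the small factor $\|Z\|_{p\textrm{-var}}$ and is absorbed.

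Next I would bound $m(u,v)$ by Young's estimate (Proposition~\ref{Young_integral}, applicable because $Y-Y^*$ has finite $1$-variation), of the order of $\|H-H^*\|_{p\textrm{-var}}\,\|Y-Y^*\|_{p\textrm{-var}}$, which reduces everything to controlling $\|H-H^*\|_{p\textrm{-var}}$. In contrast with the single Taylor estimate~(\ref{control_couple_solutions_2}) used in the Young regime, here $H-H^*$ is a difference of rough integrals, and I would estimate it through the stability of the rough integral: regarding $f(X)-f(X^*)$ as a controlled path with Gubinelli derivative $(Df\cdot f)(X)-(Df\cdot f)(X^*)$ and invoking Theorem~\ref{rough_integral}(1) together with the composition bounds recalled after Proposition~\ref{composition_rough_regular}, one controls $\|H-H^*\|_{p\textrm{-var},u,v}$ by $\omega_{p,\mathbf Z}(u,v)^{1/p}$ times $(\|X-X^*\|_{\infty}+\|X-X^*\|_{p\textrm{-var}}+\|R_X-R_{X^*}\|_{p/2\textrm{-var}})$. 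The appearance of $\|R_X-R_{X^*}\|_{p/2\textrm{-var}}$ is the genuinely new phenomenon, so I would add a companion estimate for it, obtained from $R_X-R_{X^*}=(X-X^*)(u,v)-[f(X(u))-f(X^*(u))]Z(u,v)$ and the rough-remainder expansion of $H-H^*$, expressing $\|R_X-R_{X^*}\|_{p/2\textrm{-var}}$ in terms of $\|Y-Y^*\|_{p/2\textrm{-var}}$ plus terms carrying positive powers of $\omega_{p,\mathbf Z}$.

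Finally, collecting these inequalities yields a coupled system for the pair $(\|X-X^*\|_{p\textrm{-var}},\|R_X-R_{X^*}\|_{p/2\textrm{-var}})$ on each subinterval in which every coupling term carries a positive power of the control $\omega_{p,\mathbf Z}$. Using $\omega_{p,\mathbf Z}$ to choose a dissection $(\tau_k)$ of $[s,t]$ on which $\omega_{p,\mathbf Z}(\tau_k,\tau_{k+1})$ is small, the system should contract and force both quantities to vanish on $[\tau_k,\tau_{k+1}]$; since $X=X^*$ propagates to each left endpoint, a finite induction then gives $X=X^*$, hence $(X,Y)=(X^*,Y^*)$, on all of $[s,t]$. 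I expect the main obstacle to be precisely the closure of this coupled system to \emph{zero} rather than merely to a small quantity: unlike the Young case, where $\|X-X^*\|_{p\textrm{-var}}$ alone suffices, the rough integral forces the remainder difference $R_X-R_{X^*}$ to be carried along, and the delicate point is that $\|Y-Y^*\|_{p/2\textrm{-var}}$, whose $1$-variation is only bounded by the constant $\mathfrak M$ of Proposition~\ref{regularity_control_SD} and not small, must be re-expressed homogeneously in $\|X-X^*\|$, via an interpolation between its supremum norm and its $1$-variation together with $\|Y-Y^*\|_{\infty}\leqslant\|X-X^*\|_{\infty}+\|H-H^*\|_{\infty}$, so that it too enters the contraction with a small coefficient.
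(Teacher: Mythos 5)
Your overall architecture (localization via the control $\omega_{p,\mathbf Z}$, a coupled estimate for $\|X-X^*\|_{p\text{-var}}$ and $\|R_X-R_{X^*}\|_{p/2\text{-var}}$, induction over a dissection) is the right one, and the ``only if'' direction is fine. But there is a genuine gap in the decisive step: you apply the energy identity to $X-X^*$ and then try to use hypothesis \eqref{eq:inegfondrough} through the controlled-path expansion $X(u,r)-X^*(u,r)=[f(X(u))-f(X^*(u))]Z(u,r)+[R_X(u,r)-R_{X^*}(u,r)]$. The remainder part is indeed killed by \eqref{eq:inegfondrough}, but the leftover term
$\int_u^v\langle [f(X(u))-f(X^*(u))]Z(u,r),d(Y-Y^*)(r)\rangle$
is bounded only by $\|f\|_{\textrm{Lip}^{\gamma}}\,\|X(u)-X^*(u)\|\,\|Z\|_{p\text{-var},u,v}\,\|Y-Y^*\|_{1\text{-var},u,v}$, and the factor $\|Y-Y^*\|_{1\text{-var},u,v}$ is merely bounded by the constant of Proposition \ref{regularity_control_SD}; it is not small and not controlled by $\|X-X^*\|$. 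This term is therefore \emph{first order} in $X-X^*$ with a fixed small-but-nonzero coefficient, while the left-hand side $\|X(u,v)-X^*(u,v)\|^2$ is second order. Saying it ``is absorbed'' is exactly the unjustified step: the best the resulting inequality yields is $\|X-X^*\|\lesssim \|Z\|_{p\text{-var}}\cdot\mathfrak M(\cdot)$, i.e.\ smallness but not vanishing, and iterating the estimate only drives you to that fixed point, not to zero. (There is a secondary bookkeeping problem as well: when you pass to $p$-variation over a dissection, this term contributes $\omega_{p,\mathbf Z}^{1/2}$-type quantities whose exponent is too small for super-additivity, so it is not even dominated by a control function.) Your closing remark about interpolating $\|Y-Y^*\|_{p/2\text{-var}}$ between its sup norm and its $1$-variation does not repair this: it produces a \emph{sublinear} power of $\|X-X^*\|_\infty$, which again breaks the homogeneity needed for a contraction to zero.

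The paper avoids the leftover term entirely by running the energy identity on $R_X-R_{X^*}$ rather than on $X-X^*$. Since $X=H+Y$ with $X'=H'=f(X)$, one has $R_X(s,t)=R_H(s,t)+Y(s,t)$, so the integration by parts for $\|R_X(s,t)-R_{X^*}(s,t)\|^2$ produces exactly the integral $\int_s^t\langle R_X(s,u)-R_{X^*}(s,u),d(Y-Y^*)(u)\rangle$ appearing in \eqref{eq:inegfondrough} --- with no first-order residue --- plus a cross term $m(s,t)$ involving $R_H-R_{H^*}$. The factor $\|Y-Y^*\|_{p/2\text{-var}}$ that then arises is re-expressed as $\leqslant\|R_X-R_{X^*}\|_{p/2\text{-var}}+\|R_H-R_{H^*}\|_{p/2\text{-var}}$, i.e.\ in terms of the unknowns of the system, so every term is genuinely quadratic and the contraction closes to zero once $\|R_H-R_{H^*}\|_{p/2\text{-var},s,t}\leqslant c\,\|(X-X^*,(X-X^*)')\|_{Z,p/2,t}\,\omega_{p,\mathbf Z}(s,t)^{1/p}$ is established (your stability estimate for the rough integral of $f(X)-f(X^*)$ is essentially this step, and is correct). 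In short: the hypothesis \eqref{eq:inegfondrough} is tailored to the remainder-level energy identity, and the proof must be organized around $R_X-R_{X^*}$ from the start.
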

%


%
\begin{proof}
For the sake of simplicity, the proposition is proved on $[0,T]$ instead of $[s,t]$ with $(s,t)\in\Delta_T$.
\\
\\
First of all, if $(X,Y) = (X^*,Y^*)$ on $[s,t]$, then
\begin{displaymath}
\int_{u}^{v}
\langle R_X(u,r) - R_{X^*}(u,r),d(Y - Y^*)(r)\rangle = 0
\textrm{ $;$ }
\forall (u,v)\in\Delta_{s,t}.
\end{displaymath}
Now, let us prove that if $X(s) = X^*(s)$ and Inequality (\ref{eq:inegfondrough}) is true, then $(X,Y) = (X^*,Y^*)$.
\\
\\
There exists a constant $c_1 > 0$ such that for every $(s,t)\in\Delta_T$,
\begin{multline}\label{SC_rough_uniqueness_1}
  \|(X - X^*,(X - X^*)')\|_{Z,p/2,s,t}\\  
  \begin{aligned}
 =&\|f(X) - f(X^*)\|_{p\textrm{-var},s,t} +
 \|R_X - R_{X^*}\|_{p/2\textrm{-var},s,t}
 \\
  \leqslant& 
 c_1(\|R_X - R_{X^*}\|_{p/2\textrm{-var},s,t} 
 + \|Z\|_{p\textrm{-var},s,t}\|(X - X^*,(X - X^*)')\|_{Z,p/2,t}).
 \end{aligned}
\end{multline}
Let us find a suitable control function dominating
\begin{displaymath}
(s,t)\in\Delta_T
\longmapsto
\|R_X - R_{X^*}\|_{p/2\textrm{-var},s,t}^{p/2}.
\end{displaymath}
For every $(s,t)\in\Delta_T$,
\begin{eqnarray*}
 \|R_X(s,t) - R_{X^*}(s,t)\|^2 & = &
 \|R_H(s,t) - R_{H^*}(s,t)\|^2\\
 & &
 + 2\int_{s}^{t}\langle Y(s,u) - Y^*(s,u),d(Y - Y^*)(u)\rangle\\
 & &
 + 2\int_{s}^{t}\langle R_H(s,t) - R_{H^*}(s,t),d(Y - Y^*)(u)\rangle\\
 & = &
 \|R_H(s,t) - R_{H^*}(s,t)\|^2 \\
 & &
 + 2\int_{s}^{t}\langle R_X(s,u) - R_{X^*}(s,u),d(Y - Y^*)(u)\rangle\\
 & &
 + 2\int_{s}^{t}\langle R_H(s,t) - R_H(s,u) - (R_{H^*}(s,t) - R_{H^*}(s,u)),d(Y - Y^*)(u)\rangle\\
 & \leqslant &
 \|R_H - R_{H^*}\|_{p/2\textrm{-var},s,t}^{2} + 2m(s,t)
\end{eqnarray*}
with
\begin{displaymath}
m(s,t) :=
\int_{s}^{t}\langle R_H(u,t) - R_{H^*}(u,t),d(Y - Y^*)(u)\rangle.
\end{displaymath}
Let $(s,t)\in\Delta_T$ be arbitrarily chosen.
\\
\\
On the one hand,
\begin{displaymath}
m(s,t)\leqslant
2e\cdot c(p,p)\|R_H - R_{H^*}\|_{p/2\textrm{-var},s,t}
\|Y - Y^*\|_{p/2\textrm{-var},s,t}.
\end{displaymath}
So, there exists a constant $c_2 > 0$, not depending on $s$ and $t$, such that
\begin{eqnarray*}
 \|R_X(s,t) - R_{X^*}(s,t)\|^{p/2}
 & \leqslant &
 (\|R_H - R_{H^*}\|_{p/2\textrm{-var},s,t}^{2} + 2m(s,t))^{p/4}\\
 & \leqslant &
 c_2(\|R_H - R_{H^*}\|_{p/2\textrm{-var},s,t}^{p/2}\\
 & &
 + (\|R_H - R_{H^*}\|_{p/2\textrm{-var},s,t}^{p/2})^{1/2}
 (\|Y - Y^*\|_{p/2\textrm{-var},s,t}^{p/2})^{1/2}).
\end{eqnarray*}
Since $1/2 + 1/2  = 1$, the right-hand side of the previous inequality defines a control function (see Friz and Victoir \cite{FV10}, Exercice 1.9), and then there exists a constant $c_3 > 0$, not depending on $s$ and $t$, such that
\begin{eqnarray}
 \|R_X - R_{X^*}\|_{p/2\textrm{-var},s,t}^{p/2}
 & \leqslant &
 c_2(\|R_H - R_{H^*}\|_{p/2\textrm{-var},s,t}^{p/2} 
 \nonumber\\
 & &
 +\|R_H - R_{H^*}\|_{p/2\textrm{-var},s,t}^{p/4}
 \|Y - Y^*\|_{p/2\textrm{-var},s,t}^{p/4})
 \nonumber\\
 & \leqslant &
 c_3(\|R_H - R_{H^*}\|_{p/2\textrm{-var},s,t}^{p/2} 
 \nonumber\\
 \label{SC_rough_uniqueness_2}
 & &
 +\|R_H - R_{H^*}\|_{p/2\textrm{-var},s,t}^{p/4}
 \|R_X - R_{X^*}\|_{p/2\textrm{-var},s,t}^{p/4}).
\end{eqnarray}
On the other hand, since $X(0) = X^*(0)$:
\begin{eqnarray*}
 \|f(X)'(s) - f(X^*)'(s)\| & = &
 \|Df(X(s))f(X(s)) - Df(X(0))f(X(0))\\
 & &
 - (Df(X^*(s))f(X^*(s)) - Df(X^*(0))f(X^*(0)))\|\\
 & \leqslant &
 \|f(X)' - f(X^*)'\|_{p\textrm{-var},t}.
\end{eqnarray*}
Then,
\begin{eqnarray*}
 \|R_H(s,t) - R_{H^*}(s,t)\| & \leqslant &
 \mathfrak I_{\mathbf Z,f(X) - f(X^*)}(s,t) +
 \|(f(X)'(s) - f(X^*)'(s))\mathbb Z(s,t)\|\\
 & \leqslant &
 c(p)(\|R_{f(X)} - R_{f(X^*)}\|_{p/2\textrm{-var},s,t}\|Z\|_{p\textrm{-var},s,t}\\
 & &
 +\|f(X)' - f(X^*)'\|_{p\textrm{-var},s,t}\|\mathbb Z\|_{p/2\textrm{-var},s,t})\\
 & &
 +\|f(X)' - f(X^*)'\|_{p\textrm{-var},t}\|\mathbb Z\|_{p/2\textrm{-var},s,t}.
\end{eqnarray*}
So, with the same ideas as in P.~Friz and M.~Hairer \cite[Theorem 8.4
p. 115]{FH14},
there exists a constant $c_4 > 0$, not depending on $s$ and $t$, such that
\begin{equation}\label{SC_rough_uniqueness_3}
\|R_H - R_{H^*}\|_{p/2\textrm{-var},s,t}\leqslant
c_4\|(X - X^*,(X - X^*)')\|_{Z,p/2,t}\omega_{p,\mathbf Z}(s,t)^{1/p}.
\end{equation}
By Equations (\ref{SC_rough_uniqueness_1}), (\ref{SC_rough_uniqueness_2}) and (\ref{SC_rough_uniqueness_3}) together, there exists a constant $c_5 > 0$, not depending on $s$ and $t$, such that
\begin{displaymath}
\|(X - X^*,(X - X^*)')\|_{Z,p/2,s,t}
\leqslant
c_5\|(X - X^*,(X - X^*)')\|_{Z,p/2,t}\omega_{p,\mathbf Z}(s,t)^{1/(2p)}.
\end{displaymath}
The conclusion of the proof is the same as in Proposition \ref{SC_Young_uniqueness}.
\end{proof}
%


%
\section{Approximation scheme}
In Proposition \ref{uniqueness_RS}, it has been proved that, under
Assumption \ref{assumption_C}, Problem (\ref{rough_sweeping_process})
has a unique solution $(X,Y)$ if $p = 1$ and if,
  moreover, there is an additive continuous signal of finite
$q$-variation $W$ with $q\in [1,3[$. This section deals with the
convergence of the following approximation scheme for $X$: 
\begin{equation}\label{approximation_skorokhod_problem}
\left\{
\begin{array}{rcl}
 X_{0}^{n} & := & a\\
 X_{k + 1}^{n} & = & p_{C(t_{k + 1}^{n})}(X_{k}^{n} + f(X_{k}^{n})(t_{k + 1}^{n} - t_{k}^{n}) + W(t_{k}^{n},t_{k + 1}^{n}))\textrm{ $;$ } k\in\llbracket 0,n - 1\rrbracket,
\end{array}\right.
\end{equation}
where $n\in\mathbb N^*$ and $(t_{0}^{n},\dots,t_{n}^{n})$ is the dissection of $[0,T]$ of constant mesh $T/n$.
\\
\\
Consider the maps $X^n$, $H^n$ and $Y^n$ from $[0,T]$ into $\mathbb R^e$ defined by $X^n(t) := X_{k}^{n}$,
\begin{equation}\label{Hn_definition}
H^n(t) :=
\sum_{i = 0}^{k - 1}
f(X_{i}^{n})(t_{i + 1}^{n} - t_{i}^{n}) +
f(X_{k}^{n})(t - t_{k}^{n}) +
W(t)
\end{equation}
and
\begin{equation}\label{XHYn_relation}
Y^n(t) := X^n(t) - H^n(t_{k}^{n})
\end{equation}
for every $k\in\llbracket 0,n - 1\rrbracket$ and $t\in [t_{k}^{n},t_{k + 1}^{n}[$.
%


%
\begin{lemma}\label{convergence_Hn}
Under Assumption \ref{assumption_C}, one can extract a uniformly converging subsequence from any subsequence of $(H^n)_{n\in\mathbb N^*}$.
\end{lemma}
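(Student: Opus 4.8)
The plan is to identify $H^n$ as the time integral of the step function $X^n$ plus the fixed path $W$, and then to invoke the Arzel\`a--Ascoli theorem. First I would observe that on each $[t_{k}^{n},t_{k+1}^{n}]$ the first two terms of (\ref{Hn_definition}) coincide with $\int_{0}^{t}f(X^n(s))\,ds$: since $X^n$ is constant equal to $X_{i}^{n}$ on $[t_{i}^{n},t_{i+1}^{n}[$, one has $\int_{0}^{t}f(X^n(s))\,ds=\sum_{i=0}^{k-1}f(X_{i}^{n})(t_{i+1}^{n}-t_{i}^{n})+f(X_{k}^{n})(t-t_{k}^{n})$. Hence $H^n(t)=\int_{0}^{t}f(X^n(s))\,ds+W(t)$ for every $t\in[0,T]$, which already separates the $n$-dependent part (a smooth time integral) from the fixed continuous path $W$.

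Next I would establish a uniform bound on $(X^n)_{n}$. Each iterate $X_{k}^{n}$ is the projection $p_{C(t_{k}^{n})}(\cdot)$, so $X_{k}^{n}\in C(t_{k}^{n})$ and therefore every value of $X^n$ lies in some $C(u)$, $u\in[0,T]$. Because $C$ is compact valued and continuous for the Hausdorff distance on the compact interval $[0,T]$, the set $\bigcup_{u\in[0,T]}C(u)$ is bounded, so $\sup_{n}\|X^n\|_{\infty,T}<\infty$. Since $f\in\textrm{Lip}^{\gamma}(\mathbb R^e,\mathcal M_{e,d}(\mathbb R))$ is in particular bounded by $\|f\|_{\textrm{Lip}^{\gamma}}$, we get $\|f(X^n(s))\|\leqslant\|f\|_{\textrm{Lip}^{\gamma}}$ for all $s$ and $n$.

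From this, both hypotheses of Arzel\`a--Ascoli follow at once. For $(s,t)\in\Delta_T$ we have $\left\|\int_{s}^{t}f(X^n(u))\,du\right\|\leqslant\|f\|_{\textrm{Lip}^{\gamma}}(t-s)$, so the integral part of $H^n$ is uniformly Lipschitz with constant $\|f\|_{\textrm{Lip}^{\gamma}}$; adding the single, hence uniformly continuous, function $W$ yields equicontinuity of $(H^n)_{n}$. Likewise $\|H^n\|_{\infty,T}\leqslant\|f\|_{\textrm{Lip}^{\gamma}}\,T+\|W\|_{\infty,T}$ gives uniform boundedness. Arzel\`a--Ascoli then shows that $(H^n)_{n}$ is relatively compact in $C^0([0,T],\mathbb R^e)$, so from every subsequence one can extract a uniformly convergent sub-subsequence.

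The only point requiring any care---and what I expect to be the main (and rather minor) obstacle---is the $n$-uniform bound on $(X^n)_{n}$: it rests on the fact that the catching-up iterates remain inside the constraint sets $C(t_{k}^{n})$ and that the family $\{C(u)\tq u\in[0,T]\}$ is uniformly bounded, which is exactly where the compactness of $[0,T]$ and the Hausdorff continuity in Assumption \ref{assumption_C} enter. Everything else is the standard equicontinuity-plus-boundedness compactness argument.
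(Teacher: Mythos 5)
Your proof is correct and follows essentially the same route as the paper's: identify $H^n$ as $\int_0^\cdot f(X^n(s))\,ds + W$, bound $X^n$ uniformly via the compact values of $C$ and its Hausdorff continuity on the compact $[0,T]$, and conclude by equicontinuity plus Arzel\`a--Ascoli. The only cosmetic difference is that the paper bounds the modulus of continuity by $|t-s|\sup_n\|f\circ X^n\|_{\infty,T}+\|W\|_{q\textrm{-var},s,t}$ rather than invoking the boundedness of $f$ and the uniform continuity of $W$ directly, which changes nothing of substance.
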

%


%
\begin{proof}
On the one hand, since $C(t)$ is a bounded set for every $t\in [0,T]$, $C$ is continuous on $[0,T]$ for the Hausdorff distance and $X^n([0,T])\subset\cup_{t\in [0,T]}C(t)$ for every $n\in\mathbb N^*$ by construction,
\begin{displaymath}
\sup_{n\in\mathbb N^*}
\|X^n\|_{\infty,T} <\infty.
\end{displaymath}
On the other hand, consider $(s,t)\in\Delta_T$ and $j,k\in\llbracket 0,n\rrbracket$ such that $s < t_{i}^{n}\leqslant t$ for every $i\in\llbracket j,k\rrbracket$. Then,
\begin{eqnarray*}
 \|H^n(t) - H^n(s)\| & = &
 \left\|
 \sum_{i = 0}^{k - 1}f(X_{i}^{n})(t_{i + 1}^{n} - t_{i}^{n}) + f(X_{k}^{n})(t - t_{k}^{n})
 \right.\\
 & &
 \left.
 -\sum_{i = 0}^{j - 2}f(X_{i}^{n})(t_{i + 1}^{n} - t_{i}^{n}) - f(X_{j - 1}^{n})(s - t_{j}^{n}) +
 W(s,t)\right\|\\
 & = &
 \left\|\int_{s}^{t}f(X^n(u))du + W(s,t)\right\|\\
 & \leqslant &
 \varphi(s,t) :=
 |t - s|\sup_{n\in\mathbb N^*}\|f\circ X^n\|_{\infty,T} +\|W\|_{q\textrm{-var},s,t}.
\end{eqnarray*}
Since $(s,t)\in\Delta_T\mapsto\varphi(s,t)$ is a continuous map such that $\varphi(t,t) = 0$ for every $t\in [0,T]$, $(H^n)_{n\in\mathbb N^*}$ is equicontinuous. Therefore, by Arzel\`a-Ascoli's theorem, one can extract a uniformly converging subsequence from any subsequence of $(H^n)_{n\in\mathbb N^*}$.
\end{proof}
%


%
\begin{lemma}\label{control_variation_Yn}
Under Assumption \ref{assumption_C}, there exist $R > 0$ and $N\in\mathbb N^*$ such that
\begin{displaymath}
\sup_{n\in\mathbb N^*}
\|Y^n\|_{1\normalfont{\textrm{-var}},T}
\leqslant M(N,R)
\end{displaymath}
with
\begin{displaymath}
M(N,R) :=
\frac{N}{R}\left(
\|\gamma\|_{\infty,T} +
\sup_{n\in\mathbb N^*}\|X^n\|_{\infty,T} +
\varphi(0,T)\right)^2.
\end{displaymath}
\end{lemma}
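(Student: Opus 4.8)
The goal is to bound the $1$-variation of each $Y^n$ uniformly in $n$ by the quantity $M(N,R)$, which carries the structure of Valadier's estimate $l(\cdot,\cdot)$ from Proposition~\ref{Valadier_theorem} summed over a fixed partition. The natural strategy is to recognize each piecewise step $Y^n$ as (a discretized version of) the $w$-part of a Skorokhod decomposition and to reuse the localized catching-up estimate exactly as in the proof of Proposition~\ref{regularity_control_SD}.(2). Concretely, recall from (\ref{dissection_C}) that there are $R>0$, $N\in\mathbb N^*$ and a dissection $(t_0,\dots,t_N)$ of $[0,T]$ with $\overline B_e(\gamma(t_k),R)\subset C(u)$ for all $u\in[t_k,t_{k+1}]$. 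The first step is therefore to fix this $R$ and $N$; these are the constants that appear in the statement.

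\textbf{Key steps.} First I would observe that by construction $X^n(t)\in C(t_{k+1}^n)$ whenever $t\in[t_k^n,t_{k+1}^n[$, since $X_{k+1}^n=p_{C(t_{k+1}^n)}(\cdots)$ is a projection onto a convex set, and that the recursion (\ref{approximation_skorokhod_problem}) together with (\ref{Hn_definition})--(\ref{XHYn_relation}) exhibits $Y^n$ as the ``reflected'' part: each increment $X_{k+1}^n-X_k^n$ splits into a drift-plus-signal increment (absorbed into $H^n$) and a projection correction. The projection $X_{k+1}^n=p_{C(t_{k+1}^n)}(\cdots)$ is characterized variationally by $\langle z-X_{k+1}^n,\,X_{k+1}^n-(X_k^n+\text{increment})\rangle\geqslant 0$ for every $z\in C(t_{k+1}^n)$, and in particular for $z=\gamma(t_{k+1}^n)$ and more usefully for $z$ ranging over the fixed ball $\overline B_e(\gamma(t_k),R)$. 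The second step is to run exactly the telescoping/monotonicity argument underlying Proposition~\ref{Valadier_theorem}: on each block $[t_k,t_{k+1}]$ one uses that the center $\gamma(t_k)$ admits a ball of radius $R$ sitting inside $C(u)$ for all $u$ in the block, so the discrete analogue of Valadier's estimate gives a bound on $\sum\|Y^n(t_{i+1}^n)-Y^n(t_i^n)\|$ over the subgrid points falling in that block, of the form $l(R,\|\gamma(t_k)-X^n\|)\le \tfrac{1}{R}(\text{diameter})^2$ up to the $e=1$ versus $e>1$ distinction. The third step is to sum these $N$ block estimates, using that the total displacement on each block is controlled by $\|\gamma\|_{\infty,T}+\sup_n\|X^n\|_{\infty,T}+\varphi(0,T)$, where $\varphi$ is the equicontinuity modulus already produced in the proof of Lemma~\ref{convergence_Hn}. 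This is what assembles into $M(N,R)=\tfrac{N}{R}\bigl(\|\gamma\|_{\infty,T}+\sup_n\|X^n\|_{\infty,T}+\varphi(0,T)\bigr)^2$. The uniform finiteness of $\sup_n\|X^n\|_{\infty,T}$ was already established in Lemma~\ref{convergence_Hn}, so the bound is finite and independent of $n$.

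\textbf{Main obstacle.} The delicate point is the passage from the \emph{continuous} Valadier estimate of Proposition~\ref{Valadier_theorem}, which applies to a genuine sweeping-process solution of finite $1$-variation, to the \emph{discrete} catching-up iterates $X_k^n$. One must verify that the projection step produces the correct monotone inequality at the grid level and that telescoping across the fixed coarse blocks $[t_k,t_{k+1}]$ does not lose the factor $R$ in the denominator; this is precisely the mechanism invoked in Proposition~\ref{approximation_scheme_SP}.(1), where the discrete scheme is shown to satisfy the same $1$-variation bound $l(r,\|a-x\|)$ as the limit. I would therefore reduce the estimate on each block to Proposition~\ref{approximation_scheme_SP}.(1) applied to the shifted constraint $C(u)-H^n$ centered at $\gamma(t_k)$, rather than re-deriving the projection inequality by hand. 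The only remaining care is bookkeeping: aligning the subgrid points of the constant-mesh dissection $(t_i^n)$ with the fixed blocks $(t_k)$ so that each subgrid point lies in exactly one block and the displacement estimate uses the correct center $\gamma(t_k)$; since $R$ and $N$ are fixed first and the mesh $T/n$ refines, for each block one simply restricts the catching-up bound to those $t_i^n$ inside it, and no compatibility of the two dissections is needed beyond containment.
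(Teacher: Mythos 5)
Your proposal follows essentially the same route as the paper: recognize $Y^n(t_k^n)=p_{C(t_k^n)-H^n(t_k^n)}(Y^n(t_{k-1}^n))$ as a catching-up scheme for the shifted moving set, apply the discrete Valadier bound of Proposition \ref{approximation_scheme_SP}.(1) blockwise on a fixed dissection where a ball persists, and sum the $N$ block estimates using $\|\gamma\|_{\infty,T}+\sup_n\|X^n\|_{\infty,T}+\varphi(0,T)$. The only detail the paper adds is that the dissection $(\tau_0,\dots,\tau_N)$ is chosen so that $\varphi(\tau_i,\tau_{i+1})\leqslant R/2$ as well, which is what turns the moving inclusion $\overline B_e(\gamma(\tau_i)-H^n(u),R)\subset C(u)-H^n(u)$ into a \emph{fixed} ball of radius $R/2$ valid for all $u$ in the block (whence the factor $N/R$ rather than $N/(2R)$); your outline applies the ball condition to $C(u)$ alone and should make this shrinking explicit.
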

%


%
\begin{proof}
On the one hand, since the map $(s,t)\in\Delta_T\mapsto\varphi(s,t)$ defined in the proof of Lemma \ref{control_variation_Yn} is continuous and satisfies $\varphi(t,t) = 0$ for every $t\in [0,T]$, by Assumption \ref{assumption_C}, there exist $R > 0$, $N\in\mathbb N^*$ and a dissection $(\tau_0,\dots,\tau_N)$ of $[0,T]$ such that
\begin{displaymath}
\overline B_e(\gamma(\tau_i),R)\subset C(u)
\textrm{ and }
\varphi(\tau_i,\tau_{i + 1})\leqslant R/2
\end{displaymath}
for every $i\in\llbracket 0,N - 1\rrbracket$ and $u\in [\tau_i,\tau_{i + 1}[$. Then,
\begin{displaymath}
\overline B_e(\gamma(\tau_i) - H^n(\tau_i),R/2)\subset
\overline B_e(\gamma(\tau_i) - H^n(u),R)\subset C(u) - H^n(u)
\end{displaymath}
for every $i\in\llbracket 0,N - 1\rrbracket$ and $u\in [\tau_i,\tau_{i + 1}[$.
\\
\\
On the other hand, for every $k\in\llbracket 1,n\rrbracket$,
\begin{eqnarray*}
 Y^n(t_{k}^{n}) & = &
 p_{C(t_{k}^{n})}(X_{k - 1}^{n} + H^n(t_{k - 1}^{n},t_{k}^{n})) - H^n(t_{k}^{n})\\
 & = &
 p_{C(t_{k}^{n}) - H^n(t_{k}^{n})}(Y^n(t_{k - 1}^{n})).
\end{eqnarray*}
So, for any $i\in\llbracket 0,N - 1\rrbracket$, by applying Proposition \ref{approximation_scheme_SP}.(1) to $Y^n$ on $[\tau_i,\tau_{i + 1}[$:
\begin{eqnarray*}
 \|Y^n\|_{1\textrm{-var},\tau_{i},\tau_{i + 1}}
 & \leqslant &
 l(R/2,\|\gamma(\tau_i) - H^n(\tau_i) - Y^n(\tau_i)\|)\\
 & \leqslant &
 R^{-1}\|\gamma(\tau_i) - H^n(\tau_i) - Y^n(\tau_i)\|^2.
\end{eqnarray*}
Since there exists $j\in\llbracket 0,n\rrbracket$ such that $Y^n(\tau_i) = Y^n(t_{j}^{n})$,
\begin{eqnarray*}
 \|Y^n\|_{1\textrm{-var},\tau_{i},\tau_{i + 1}}
 & \leqslant &
 R^{-1}(\|\gamma(\tau_i)\| +\|H^n(\tau_i) - H^n(t_{j}^{n})\| +\|H^n(t_{j}^{n}) + Y^n(t_{j}^{n})\|)^2\\
 & \leqslant &
 R^{-1}(\|\gamma(\tau_i)\| +\varphi(t_{j}^{n},\tau_i) +\|X_{j}^{n}\|)^2\\
 & \leqslant &
 N^{-1}M(N,R).
\end{eqnarray*}
Therefore,
\begin{displaymath}
\|Y^n\|_{1\textrm{-var},T} =
\sum_{i = 0}^{N - 1}
\|Y^n\|_{1\textrm{-var},\tau_{i},\tau_{i + 1}}\\
\leqslant
M(N,R).
\end{displaymath}
\end{proof}
%


%
\begin{lemma}\label{inequality_Yn}
Under Assumption \ref{assumption_C}, for every $(s,t)\in\Delta_T$ and $z\in\cap_{\tau\in [s,t]}(C(\tau) - H^n(\tau))$,
\begin{displaymath}
\langle z,Y^n(t) - Y^n(s)\rangle
\geqslant\frac{1}{2}(\|Y^n(t)\|^2 -\|Y^n(s)\|^2).
\end{displaymath}
\end{lemma}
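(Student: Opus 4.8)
The plan is to establish the stated inequality as a discrete, telescoped counterpart of condition (2) of Proposition \ref{characterization_sweeping_process}, the single-step version being a direct consequence of the variational characterization of the metric projection. First I would record the recursion already obtained in the proof of Lemma \ref{control_variation_Yn}, namely
\begin{displaymath}
Y^n(t_i^n) = p_{C(t_i^n) - H^n(t_i^n)}(Y^n(t_{i-1}^n))
\textrm{ $;$ }
i\in\llbracket 1,n\rrbracket,
\end{displaymath}
so that, writing $Y_i := Y^n(t_i^n)$ and $K_i := C(t_i^n) - H^n(t_i^n)$, one has $Y_i = p_{K_i}(Y_{i-1})$.

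Next, I would fix $(s,t)\in\Delta_T$ together with $z\in\cap_{\tau\in [s,t]}(C(\tau) - H^n(\tau))$, and choose $j,k\in\llbracket 0,n\rrbracket$ with $s\in [t_j^n,t_{j+1}^n[$ and $t\in [t_k^n,t_{k+1}^n[$, so that $Y^n(s) = Y_j$ and $Y^n(t) = Y_k$ since $Y^n$ is constant on each mesh interval. For every $i\in\llbracket j+1,k\rrbracket$ one has $t_i^n\in\,]s,t]\subset [s,t]$, hence $z\in K_i$. The variational inequality defining the projection $Y_i = p_{K_i}(Y_{i-1})$ then reads $\langle Y_{i-1} - Y_i, z - Y_i\rangle\leqslant 0$, which I would rewrite as $\langle z, Y_i - Y_{i-1}\rangle\geqslant\langle Y_i, Y_i - Y_{i-1}\rangle$.

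The elementary identity
\begin{displaymath}
\langle Y_i, Y_i - Y_{i-1}\rangle =
\frac{1}{2}(\|Y_i\|^2 - \|Y_{i-1}\|^2) +
\frac{1}{2}\|Y_i - Y_{i-1}\|^2
\geqslant
\frac{1}{2}(\|Y_i\|^2 - \|Y_{i-1}\|^2)
\end{displaymath}
yields the single-step estimate $\langle z, Y_i - Y_{i-1}\rangle\geqslant\frac{1}{2}(\|Y_i\|^2 - \|Y_{i-1}\|^2)$. Summing over $i\in\llbracket j+1,k\rrbracket$, both sides telescope and give $\langle z, Y_k - Y_j\rangle\geqslant\frac{1}{2}(\|Y_k\|^2 - \|Y_j\|^2)$, that is exactly the claimed inequality after replacing $Y_j$ by $Y^n(s)$ and $Y_k$ by $Y^n(t)$.

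As for difficulty, there is no genuine analytic obstacle here: the whole content is the projection variational inequality together with a telescoping sum, in the spirit of the classical proof of Proposition \ref{characterization_sweeping_process}. The only point requiring care is the bookkeeping ensuring that $z$ belongs to each intermediate set $K_i$; this is precisely why the hypothesis demands $z$ in the intersection $\cap_{\tau\in [s,t]}(C(\tau) - H^n(\tau))$ over the whole interval rather than at a single instant.
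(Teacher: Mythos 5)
Your proof is correct and follows essentially the same route as the paper's: both arguments reduce to the variational inequality for the projection $Y^n(t_i^n)=p_{C(t_i^n)-H^n(t_i^n)}(Y^n(t_{i-1}^n))$ (the paper phrases it equivalently in terms of $X_i^n=p_{C(t_i^n)}(X_{i-1}^n+H^n(t_{i-1}^n,t_i^n))$ and the points $y_i=z+H^n(t_i^n)\in C(t_i^n)$), followed by the identity $\langle Y_i,Y_i-Y_{i-1}\rangle\geqslant\frac{1}{2}(\|Y_i\|^2-\|Y_{i-1}\|^2)$ and a telescoping sum over the grid points $t_i^n\in\,]s,t]$.
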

%


%
\begin{proof}
Consider $(s,t)\in\Delta_T$. There exists a maximal interval $\llbracket j,k\rrbracket\subset\llbracket 0,n\rrbracket$ such that
\begin{displaymath}
s < t_{i}^{n}\leqslant t
\textrm{ $;$ }
\forall i\in\llbracket j,k\rrbracket.
\end{displaymath}
Consider $z\in\cap_{\tau\in [s,t]}(C(\tau) - H^n(\tau))$. In particular, for every $i\in\llbracket j,k\rrbracket$, there exists $y_i\in C(t_{i}^{n})$ such that $z = y_i - H^n(t_{i}^{n})$. For every $i\in\llbracket j,k\rrbracket$,
\begin{eqnarray*}
 \langle z - Y^n(t_{i}^{n}),
 Y^n(t_{i}^{n}) - Y^n(t_{i - 1}^{n})\rangle
 & = &\\
 \langle y_i - H^n(t_{i}^{n}) - Y^n(t_{i}^{n}),
 Y^n(t_{i}^{n}) - Y^n(t_{i - 1}^{n})\rangle
 & = &\\
 \langle y_i - X_{i}^{n},X_{i}^{n} - (X_{i - 1}^{n} + H^n(t_{i - 1}^{n},t_{i}^{n}))\rangle
 & \geqslant & 0
\end{eqnarray*}
because
\begin{displaymath}
X_{i}^{n} =
p_{C(t_{i}^{n})}(X_{i - 1}^{n} + H^n(t_{i - 1}^{n},t_{i}^{n})).
\end{displaymath}
Then,
\begin{eqnarray*}
 \langle z,Y^n(t) - Y^n(s)\rangle
 & = &
 \langle z,Y^n(t_{k}^{n}) - Y^n(t_{j - 1}^{n})\rangle
 =\sum_{i = j}^{k}\langle z,Y^n(t_{i}^{n}) - Y^n(t_{i - 1}^{n})\rangle\\
 & \geqslant &
 \sum_{i = j}^{k}\langle Y^n(t_{i}^{n}),Y^n(t_{i}^{n}) - Y^n(t_{i - 1}^{n})\rangle\\
 & \geqslant &
 \frac{1}{2}\sum_{i = j}^{k}
 (\|Y^n(t_{i}^{n})\|^2 -\|Y^n(t_{i - 1}^{n})\|^2) =\frac{1}{2}(\|Y^n(t)\|^2 -\|Y^n(s)\|^2).
\end{eqnarray*}
\end{proof}
\noindent
Now, $W$ is $1/q$-H\"older continuous from $[0,T]$ into $\mathbb R^e$. Then, there exists a constant $C_{\varphi} > 0$ such that
\begin{displaymath}
\varphi(s,t)
\leqslant C_{\varphi}|t - s|^{1/q}
\textrm{ $;$ }
\forall (s,t)\in\Delta_T.
\end{displaymath}
%


%
\begin{theorem}\label{convergence_subsequences}
Assume that $C$ fulfills Assumption \ref{assumption_C} and that there exists $(K,\alpha)\in ]0,\infty[\times ]0,1[$ such that
\begin{displaymath}
d_H(C(s),C(t))
\leqslant
K|t - s|^{\alpha}
\textrm{ $;$ }
\forall (s,t)\in\Delta_T.
\end{displaymath}
%
Then, $(X^{n},Y^{n})_{n\in\mathbb N^*}$
converges uniformly to the unique solution $(X,Y)$
to Problem (\ref{rough_sweeping_process}).
\end{theorem}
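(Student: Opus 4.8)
The plan is to combine the compactness and variational estimates of Lemmas \ref{convergence_Hn}, \ref{control_variation_Yn} and \ref{inequality_Yn} with the continuity of the Skorokhod map (Theorem \ref{PRF_key_result}) and the uniqueness of Proposition \ref{uniqueness_RS}. Since Proposition \ref{uniqueness_RS} guarantees that the limit problem has a unique solution $(X,Y)=(v_{H},w_{H})$, it suffices, by the usual subsequence principle, to show that every subsequence of $(X^{n},Y^{n})_{n}$ has a further subsequence converging uniformly to this solution; the limit being forced to be the same each time, the whole sequence then converges. So I would fix a subsequence once and for all and argue on it.

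First I would use Lemma \ref{convergence_Hn} to extract a sub-subsequence (still denoted by $n$) such that $H^{n}\to H$ uniformly on $[0,T]$ for some $H\in C^{0}([0,T],\mathbb R^{e})$. By Theorem \ref{PRF_key_result} this yields $\sup_{n}\|w_{H^{n}}\|_{1\textrm{-var},T}<\infty$ and $(v_{H^{n}},w_{H^{n}})\to(v_{H},w_{H})$ uniformly, where $(v_{h},w_{h})$ is the Skorokhod decomposition of $(C,a,h)$. The key structural observation is that $Y^{n}$ is exactly Moreau's catching-up algorithm of constant mesh $T/n$ for the moving set $C_{H^{n}}=C-H^{n}$: the computation in the proof of Lemma \ref{inequality_Yn} gives $Y^{n}(t_{k}^{n})=p_{C_{H^{n}}(t_{k}^{n})}(Y^{n}(t_{k-1}^{n}))$, so $Y^{n}$ is the discretization of Proposition \ref{approximation_scheme_SP} applied to $C_{H^{n}}$, while $w_{H^{n}}$ is the corresponding exact solution. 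The discrete variational inequality of Lemma \ref{inequality_Yn} is the discrete counterpart of condition (2) of Proposition \ref{characterization_sweeping_process}, and, together with the variation bound of Lemma \ref{control_variation_Yn}, it is what guarantees that this scheme converges, as the mesh tends to $0$, to the unique Skorokhod solution $w_{H^{n}}$.

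The heart of the argument, and the step I expect to be the main obstacle, is then to prove that $\|Y^{n}-w_{H^{n}}\|_{\infty,T}\to0$, i.e.\ that the discretization error vanishes uniformly in $n$ even though the moving set $C_{H^{n}}$ itself depends on $n$. This is where the Hölder hypothesis $d_{H}(C(s),C(t))\le K|t-s|^{\alpha}$ enters: together with the equicontinuity estimate $\|H^{n}(u,v)\|\le\varphi(u,v)\le C_{\varphi}|v-u|^{1/q}$ from Lemma \ref{convergence_Hn}, it produces a modulus of continuity $\varpi(\delta):=K\delta^{\alpha}+C_{\varphi}\delta^{1/q}$ for the whole family $(C_{H^{n}})_{n}$, uniform in $n$, since $d_{H}(C_{H^{n}}(u),C_{H^{n}}(v))\le d_{H}(C(u),C(v))+\|H^{n}(u,v)\|\le\varpi(|v-u|)$. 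Feeding $\varpi$ and the uniform bound $M:=\sup_{n}\|w_{H^{n}}\|_{1\textrm{-var},T}<\infty$ (Theorem \ref{PRF_key_result} and Lemma \ref{control_variation_Yn}) into the Cauchy-type estimate of Proposition \ref{approximation_scheme_SP}.(2) for the set $C_{H^{n}}$, comparing the mesh $T/n$ with an auxiliary mesh $T/m$ ($m\in n\mathbb N^{*}$) and letting $m\to\infty$, gives
\begin{displaymath}
\|Y^{n}-w_{H^{n}}\|_{\infty,T}^{2}\le 4M\,\varpi(T/n)\xrightarrow[n\rightarrow\infty]{}0 .
\end{displaymath}
Equivalently, this bound can be obtained in a self-contained way by the energy ($\tfrac12\|\cdot\|^{2}$) method, playing the pointwise normal-cone condition satisfied by $Y^{n}$ at the grid points against the one satisfied by $w_{H^{n}}$, the grid mismatch contributing precisely the error $\varpi(T/n)$.

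Combining this with Theorem \ref{PRF_key_result} yields $Y^{n}\to w_{H}=:Y$ uniformly, and since $X^{n}(t)=Y^{n}(t)+H^{n}(t_{k}^{n})$ with $\sup_{t\in[0,T]}\|H^{n}(t_{k}^{n})-H(t)\|\le C_{\varphi}(T/n)^{1/q}+\|H^{n}-H\|_{\infty,T}\to0$, also $X^{n}\to v_{H}=:X$ uniformly. It then remains to identify $H$: as $X^{n}\to X$ uniformly and $f$ is continuous, $\int_{0}^{\cdot}f(X^{n}(s))\,ds\to\int_{0}^{\cdot}f(X(s))\,ds$ uniformly, so passing to the limit in $H^{n}=\int_{0}^{\cdot}f(X^{n}(s))\,ds+W$ (see the proof of Lemma \ref{convergence_Hn}) gives $H=\int_{0}^{\cdot}f(X(s))\,ds+W$. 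Hence $(X,Y)=(v_{H},w_{H})$ solves Problem (\ref{sweeping_process_additif}); by the uniqueness in Proposition \ref{uniqueness_RS} this limit is independent of the extracted subsequence, and the whole sequence $(X^{n},Y^{n})_{n}$ converges uniformly to $(X,Y)$.
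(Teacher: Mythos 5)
Your proposal is correct, but the middle of the argument follows a genuinely different route from the paper. The paper never introduces the exact Skorokhod solutions $w_{H^{n}}$ of the frozen problems: after extracting $H^{\psi(n)}\to H^{*}$, it proves directly that $(Y^{\psi(n)})_{n}$ is uniformly Cauchy by comparing $Y^{\psi(n)}$ and $Y^{\psi(m)}$ through the common refinement $Y^{p}$ with $p=\operatorname{lcm}(\psi(n),\psi(m))$, using a two-moving-sets version of Proposition \ref{approximation_scheme_SP}.(2) applied to $C_{H^{n}}$ and $C_{H^{m}}$ (which is where the extra term $\|H^{n}-H^{m}\|_{\infty,T}$ appears), and then identifies the limit $Y^{*}$ by passing Lemma \ref{inequality_Yn} to the limit and invoking Proposition \ref{characterization_sweeping_process}; Theorem \ref{PRF_key_result} is not used at all in that proof. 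You instead split the error as $\|Y^{n}-w_{H^{n}}\|+\|w_{H^{n}}-w_{H}\|$, bound the first (discretization) term by Proposition \ref{approximation_scheme_SP}.(2) for the \emph{single} moving set $C_{H^{n}}$ with two meshes, and handle the second by the continuity of the Skorokhod map. Both routes rest on the same two quantitative inputs (the uniform $1$-variation bound of Lemma \ref{control_variation_Yn} and the modulus $\varpi(\delta)=K\delta^{\alpha}+C_{\varphi}\delta^{1/q}$ for the family $C_{H^{n}}$), and yours is arguably cleaner conceptually; but be aware of two points. First, your estimate needs the uniform $1$-variation bound also for the auxiliary refinements $\tilde Y^{n,m}$ of the $C_{H^{n}}$-scheme with mesh $T/m$, not only for $Y^{n}$ and $w_{H^{n}}$; this does hold, by rerunning the proof of Lemma \ref{control_variation_Yn} with the finer mesh, but it should be said. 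Second, the convergence $\tilde Y^{n,m}\to w_{H^{n}}$ as $m\to\infty$ for fixed $n$ is not stated as such in the paper (only the Cauchy estimate and the variational characterization are), so you are importing Moreau--Monteiro Marques's convergence theorem for a fixed moving set; the identification step via Lemma \ref{inequality_Yn} and Proposition \ref{characterization_sweeping_process} that the paper performs explicitly is thus hidden inside your citation rather than avoided.
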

%


%
\begin{proof}
Consider an extraction $\psi :\mathbb N^*\rightarrow\mathbb N^*$
such that $(H^{\psi(n)})_{n\in\mathbb N^*}$ is uniformly converging
to a limit $H^*$.
\\
\\
On the one hand, consider $n\in\mathbb N^*$ such that $T/n\in ]0,1]$, $m\in n\mathbb N^*$ and $t\in [0,T]$. By Proposition \ref{approximation_scheme_SP}.(2) together with Lemma \ref{control_variation_Yn}, there exist $R > 0$, $N\in\mathbb N^*$, $i\in\llbracket 1,n\rrbracket$ and $j\in\llbracket 1,m\rrbracket$ such that $t\in [t_{i - 1}^{n},t_{i}^{n}[$, $t\in [t_{j - 1}^{m},t_{j}^{m}[$ and
\begin{eqnarray}
 \|Y^n(t) - Y^m(t)\|^2
 & \leqslant & 2d_H(C(t_{i}^{n}) - H^n(t_{i}^{n}),C(t_{j}^{m}) - H^m(t_{j}^{m}))
 \nonumber\\
 & &
 \times
 (\|Y^n\|_{1\normalfont{\textrm{-var}},T} +
 \|Y^m\|_{1\normalfont{\textrm{-var}},T})
 \nonumber\\
 & \leqslant &
 4M(N,R)(d_H(C(t_{i}^{n}),C(t_{j}^{m})) +\|H^n(t_{i}^{n}) - H^m(t_{j}^{m})\|)
 \nonumber\\
 & \leqslant &
 4M(N,R)(K|t_{i}^{n} - t_{j}^{m}|^{\alpha}
 \nonumber\\
 & &
 +\|H^n(t_{i}^{n}) - H^n(t_{j}^{m})\| +
 \|H^n(t_{j}^{m}) - H^m(t_{j}^{m})\|)
 \nonumber\\
 \label{convergence_subsequences_1}
 & \leqslant &
 4M(N,R)((C_{\varphi} + K)|T/n|^{\alpha\wedge 1/q}
 +\|H^n - H^m\|_{\infty,T}).
\end{eqnarray}
Consider $\varepsilon > 0$. There exists $N_{\varepsilon}\in\mathbb N^*$ such that for every $n,m\in\mathbb N^*\cap [N_{\varepsilon},\infty[$,
\begin{equation}\label{convergence_subsequences_2}
|T/\psi(n)|^{\alpha\wedge 1/q},|T/\psi(m)|^{\alpha\wedge 1/q}
\leqslant
\frac{\varepsilon}{16M(N,R)(C_{\varphi} + K)}\wedge 1
\end{equation}
and
\begin{equation}\label{convergence_subsequences_3}
\|H^{\psi(n)} - H^{\psi(m)}\|_{\infty,T}\leqslant
\frac{\varepsilon}{16M(N,R)}.
\end{equation}
Consider $n,m\in\mathbb N^*\cap [N_{\varepsilon},\infty[$ and let $p$ be the least common multiple of $\psi(n)$ and $\psi(m)$. By Inequality (\ref{convergence_subsequences_1}):
\begin{small}
\begin{eqnarray*}
 \|Y^{\psi(n)} - Y^{\psi(m)}\|_{\infty,T}^{2} & \leqslant &
 2(\|Y^{\psi(n)} - Y^p\|_{\infty,T}^{2} +\|Y^{\psi(m)} - Y^p\|_{\infty,T}^{2})\\
 & \leqslant &
 8M(N,R)((C_{\varphi} + K)|T/\psi(n)|^{\alpha}
 +\|H^{\psi(n)} - H^{\psi(\psi^{-1}(p))}\|_{\infty,T})\\
 & &
 + 8M(N,R)((C_{\varphi} + K)|T/\psi(m)|^{\alpha}
 +\|H^{\psi(m)} - H^{\psi(\psi^{-1}(p))}\|_{\infty,T}).
\end{eqnarray*}
\end{small}
\newline
Since $p\geqslant\psi(n)$ and $p\geqslant\psi(m)$, $\psi^{-1}(p)\geqslant n\vee m\geqslant N_{\varepsilon}$. Then, by (\ref{convergence_subsequences_2}) and (\ref{convergence_subsequences_3}) together:
\begin{displaymath}
\|Y^{\psi(n)} - Y^{\psi(m)}\|_{\infty,T}^{2}
\leqslant\varepsilon.
\end{displaymath}
Therefore, $(Y^{\psi(n)})_{n\in\mathbb N^*}$ is a
uniformly converging sequence and by Equation (\ref{XHYn_relation}), $(X^{\psi(n)})_{n\in\mathbb N^*}$ also. In the sequel, the limit of $(Y^{\psi(n)})_{n\in\mathbb N^*}$ (resp. $(X^{\psi(n)})_{n\in\mathbb N^*}$) is denoted by $Y^*$ (resp. $X^*$).
\\
\\
On the other hand,
consider $(s,t)\in\Delta_T$, $z\in\cap_{\tau\in [s,t]} C(\tau)$ and $\tau\in [s,t]$. By Lemma \ref{inequality_Yn}:
\begin{displaymath}
\langle z - H^{\psi(n)}(\tau),Y^{\psi(n)}(t) - Y^{\psi(n)}(s)\rangle
\geqslant\frac{1}{2}(\|Y^{\psi(n)}(t)\|^2 -\|Y^{\psi(n)}(s)\|^2).
\end{displaymath}
So, when $n$ goes to infinity:
\begin{displaymath}
\langle z - H^*(\tau),Y^*(t) - Y^*(s)\rangle
\geqslant\frac{1}{2}(\|Y^*(t)\|^2 -\|Y^*(s)\|^2).
\end{displaymath}
Therefore, by Proposition \ref{characterization_sweeping_process}:
\begin{displaymath}
-\frac{dDY^*}{d|DY^*|}(t)
\in N_{C(t) - H^*(t)}(Y^*(t))
\textrm{ $|DY^*|$-a.e.}
\end{displaymath}
Moreover, since $(X^{\psi(n)})_{n\in\mathbb N^*}$ is a sequence of step functions uniformly converging to $X^*$, the definition of $(H^{\psi(n)})_{n\in\mathbb N^*}$ given by Equality (\ref{Hn_definition}) ensures that:
\begin{displaymath}
H^*(t) =\int_{0}^{t}f(X^*(s))ds + W(t)
\textrm{ $;$ }
\forall t\in [0,T].
\end{displaymath}
Since the solution $(X,Y)$ to (\ref{rough_sweeping_process}) is unique by Proposition \ref{uniqueness_RS}, $(X^*,Y^*) = (X,Y)$ and $H^* = X - Y$.
\\
\\
We have proved that, for each subsequence of $(X^n,Y^n)_{n\in\mathbb N}$, we can extract a further subsequence which converges uniformly to the solution $(X,Y)$. Thus $(X^n,Y^n)_{n\in\mathbb N^*}$ converges uniformly to $(X,Y)$.
\end{proof}
%


%
\section{Sweeping processes perturbed by a stochastic noise directed by a fBm}
First of all, let us recall the definition of fractional Brownian motion.
%


%
\begin{definition}\label{fBm}
Let $(\textrm B(t))_{t\in [0,T]}$ be a $d$-dimensional centered Gaussian process. It is a fractional Brownian motion of Hurst parameter $H\in ]0,1[$ if and only if,
\begin{displaymath}
\normalfont{\textrm{cov}}(B_i(s),B_j(s)) =
\frac{1}{2}(|t|^{2H} + |s|^{2H} - |t - s|^{2H})\delta_{i,j}
\end{displaymath}
for every $(i,j)\in\llbracket 1,d\rrbracket^2$ and $(s,t)\in [0,T]^2$.
\end{definition}
\noindent
Fore more details on fractional Brownian motion, we refer the reader to Nualart \cite[Chapter 5]{NUALART06}.
\\
\\
Let $B := (B(t))_{t\in [0,T]}$ be a $d$-dimensional fractional
Brownian motion of Hurst parameter $H\in ]1/3,1[$, defined on a
probability space $(\Omega,\mathcal A,\mathbb P)$.
\\
\\
By Garcia-Rodemich-Rumsey's lemma (see Nualart \cite[Lemma
A.3.1]{NUALART06}), the paths of $B$ are $\alpha$-H\"older
continuous for every $\alpha\in ]0,H[$. So, in particular, the paths of $B$ are continuous and of finite
$p$-variation for every $p\in ]1/H,\infty[$. By Friz and Victoir
\cite[Proposition 15.5 and Theorem 15.33]{FV10}, there exists an
enhanced Gaussian process $\mathbf B$ such that $\mathbf B^{(1)} = B$.
\\
\\
Consider $b\in C^{[p] + 1}(\mathbb R^e)$, $\sigma\in C^{[p] + 1}(\mathbb R^e,\mathcal M_{e,d}(\mathbb R))$ and the following sweeping process, perturbed by a pathwise stochastic noise directed by $\mathbf B$:
\begin{equation}\label{fractional_sweeping_process}
\left\{
\begin{array}{rcl}
 X(t) & = &
 H(t) + Y(t)\\
 H(t) & = &
 \displaystyle{
 \int_{0}^{t}b(X(s))ds +
 \int_{0}^{t}\sigma(X(s))d\mathbf B(s)}\\
 -\displaystyle{\frac{dDY}{d|DY|}}(t) & \in & N_{C_H(t)}(Y(t))
 \textrm{ $|DY|$-a.e. with }
 Y(0) = a.
\end{array}
\right.
\end{equation}
In the following, since $H$ can be deduced from $X$, and
$Y$ from $X$ and $H$, we say that $X$ is a solution to Problem
(\ref{fractional_sweeping_process}) if the corresponding triple
$(X,H,Y)$ satisfies (\ref{fractional_sweeping_process}).
\\
\\
Let $W := (W(t))_{t\in [0,T]}$ be the stochastic process defined by
\begin{displaymath}
W(t) :=
te_1 +\sum_{k = 1}^{d}B_k(t)e_{k + 1}
\textrm{ $;$ }
\forall t\in [0,T].
\end{displaymath}
By Friz and Victoir \cite[Theorem 9.26]{FV10}, there exists a $G\Omega_{p,T}(\mathbb R^{d + 1})$-valued
enhanced stochastic process $\mathbf W$ such that $\mathbf
W^{(1)} := W$. Consider also the map $f :\mathbb
R^e\rightarrow\mathcal M_{e,d + 1}(\mathbb R)$ defined by:
\begin{displaymath}
f(x)(u,v) :=
b(x)u +\sigma(x)v
\textrm{ $;$ }
\forall x\in\mathbb R^e
\textrm{$,$ }
\forall (u,v)\in\mathbb R^{d + 1}.
\end{displaymath}
So, Problem (\ref{fractional_sweeping_process}) can be reformulated as follow:
\begin{displaymath}
\left\{
\begin{array}{rcl}
 X(t) & = &
 H(t) + Y(t)\\
 H(t) & = &
 \displaystyle{
 \int_{0}^{t}f(X(s))d\mathbf W(s)}\\
 -\displaystyle{\frac{dDY}{d|DY|}}(t) & \in & N_{C_H(t)}(Y(t))
 \textrm{ $|DY|$-a.e. with }
 Y(0) = a.
\end{array}
\right.
\end{displaymath}
Therefore, the previous results of this paper apply to Problem
(\ref{fractional_sweeping_process}):

\begin{theorem}\label{theo:existence-fBm}
 (Existence) 
 Assume that, for every $t\in[0,T]$,
 $C(t)$ is a  random set with convex compact values with nonempty interior,
and that the paths of $C$ are continuous for the Hausdorff distance. 
Then 
Problem (\ref{fractional_sweeping_process}) has at least one solution, 
whose paths belong to $C^{p\normalfont{\textrm{-var}}}([0,T],\mathbb R^e)$,
for $p\in]1/H,\infty[$.
\end{theorem}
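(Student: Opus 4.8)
The plan is to reduce Problem (\ref{fractional_sweeping_process}) to the deterministic rough sweeping process (\ref{rough_sweeping_process}) and to invoke Theorem \ref{existence_rough} (or Theorem \ref{existence_Young}) pathwise, exactly as the reformulation preceding the statement suggests. Since $H\in]1/3,1[$, one has $1<1/H<3$, so the interval $]1/H,3[$ is nonempty; fix $p$ in it. The paths of $W$ are then of finite $p$-variation, and $\mathbf W$ is a $G\Omega_{p,T}(\mathbb R^{d+1})$-valued process by construction. The combined field $f(x)(u,v)=b(x)u+\sigma(x)v$ is linear in $(u,v)$ and of class $C^{[p]+1}$ in $x$, and the single rough integral against $\mathbf W$ simultaneously encodes the drift $\int_0^{\cdot}b(X)\,ds$ and the diffusion $\int_0^{\cdot}\sigma(X)\,d\mathbf B$.

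The first point to settle is the regularity required by the rough integral, namely $f\in\textrm{Lip}^{\gamma}$ with $\gamma>p$: as stated, $b$ and $\sigma$ are merely $C^{[p]+1}$, hence not a priori bounded with bounded derivatives. Here I would use that any solution is confined to a fixed compact set. Indeed, since $C$ is compact-convex valued and continuous for the Hausdorff distance on the compact interval $[0,T]$, the set $\mathcal K(\omega):=\bigcup_{t\in[0,T]}C(\omega)(t)$ is compact, and every solution satisfies $X(t)\in C(\omega)(t)\subset\mathcal K(\omega)$ for all $t$. One may therefore replace $b,\sigma$ by compactly supported smooth modifications $\tilde b,\tilde\sigma$ coinciding with $b,\sigma$ on a neighborhood of $\mathcal K(\omega)$; the associated $\tilde f$ then belongs to $\textrm{Lip}^{\gamma}$ for every $\gamma>p$ (Remark b.\ following the definition of $\textrm{Lip}^{\gamma}$), while solutions of the modified and original problems coincide because they live in $\mathcal K(\omega)$.

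Next comes the pathwise application. For $\mathbb P$-almost every $\omega$, $C(\omega)$ is a convex compact valued multifunction, continuous for the Hausdorff distance, with $C(\omega)(t)$ of nonempty interior for every $t\in[0,T]$; by the remark following Assumption \ref{assumption_C}, this is precisely Assumption \ref{assumption_C}. Theorem \ref{existence_rough} (applied when the chosen $p$ lies in $[2,3[$) or Theorem \ref{existence_Young} (applicable when $p\in[1,2[$, which is possible exactly when $H>1/2$) then yields, for the deterministic data $(C(\omega),a,\mathbf W(\omega))$, a path $X(\omega)$ solving (\ref{rough_sweeping_process}), hence (\ref{fractional_sweeping_process}). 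Since the constructed solution decomposes as $X=H+Y$ with $Y$ of finite $1$-variation and $H$ inheriting the variation regularity of the signal, which is of finite $p$-variation for every $p>1/H$, the single solution path in fact belongs to $C^{p\textrm{-var}}([0,T],\mathbb R^e)$ for all $p\in]1/H,\infty[$, as claimed.

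The hard part is to upgrade this family of pathwise solutions into a genuine stochastic process, i.e.\ a map measurable in $\omega$. The existence argument of Theorem \ref{existence_rough} terminates with an Arzel\`a--Ascoli extraction, which is not a priori measurable in $\omega$, so measurability does not come for free. I would handle this by a measurable selection argument: the solution set-valued map $\omega\mapsto S(\omega)$ has nonempty values (by the above) that are closed in $C^0([0,T],\mathbb R^e)$, and its graph is measurable thanks to the continuity of the Skorokhod map (Theorem \ref{PRF_key_result}) and of the rough integral, combined with the measurability of $\omega\mapsto\mathbf W(\omega)$; a Kuratowski--Ryll-Nardzewski type theorem then provides a measurable selection. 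Verifying the measurability of this graph---and in particular the joint measurability of the rough-integral solution map in the signal---is the delicate step, and it is where the genuinely probabilistic content of the statement resides, the remainder being a pathwise transcription of the deterministic theory of Sections 2 and 3.
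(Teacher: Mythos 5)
Your proof takes essentially the same route as the paper: the paper's entire proof of this theorem is the single sentence ``This is a direct pathwise application of Theorems \ref{existence_Young} and \ref{existence_rough}'', relying on the reformulation of Problem (\ref{fractional_sweeping_process}) as Problem (\ref{rough_sweeping_process}) with signal $\mathbf W$ and vector field $f(x)(u,v)=b(x)u+\sigma(x)v$ carried out just before the statement. The two extra issues you raise are both points the paper passes over in silence: the truncation of $b,\sigma$ (needed because $C^{[p]+1}$ alone does not yield $f\in\textrm{Lip}^{\gamma}$ with $\gamma>p$, and correctly resolved by your confinement argument using the compactness of $\bigcup_{t\in[0,T]}C(t)$) genuinely tightens the argument, while the measurability in $\omega$ of the selected solution is a question the paper does not engage with at all --- its notion of solution here is purely pathwise --- so the fact that you leave the measurable-selection step as a sketch does not put you behind the paper's own standard of rigour.
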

\begin{proof}
This is a direct pathwise application of Theorems
\ref{existence_Young} and \ref{existence_rough}. 
\end{proof}
\begin{proposition}\label{prop:uniqueness-fBm}
 (Existence and uniqueness for an additive fractional noise) Assume that, for every $t\in[0,T]$,
 $C(t)$ is a random set with convex compact values with nonempty interior,
and that the paths of $C$ are continuous for the Hausdorff distance. If $\sigma$ is a constant map,
then 
Problem (\ref{fractional_sweeping_process}) has a unique solution, 
whose paths belong to $C^{p\normalfont{\textrm{-var}}}([0,T],\mathbb R^e)$,
for $p\in]1/H,\infty[$.
\end{proposition}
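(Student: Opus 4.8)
The plan is to use the constancy of $\sigma$ to collapse the stochastic integral into an additive term, thereby reducing Problem~(\ref{fractional_sweeping_process}) to the additive-noise Skorokhod problem~(\ref{sweeping_process_additif}) of Proposition~\ref{uniqueness_RS}, and then to argue pathwise.

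First I would record the pathwise regularity of the noise. As already observed above, the Garcia--Rodemich--Rumsey lemma guarantees that, almost surely, $t\mapsto B(t)$ is $\alpha$-H\"older for every $\alpha\in{]0,H[}$, hence of finite $q$-variation for every $q\in{]1/H,\infty[}$. Since $H>1/3$ gives $1/H<3$, one may fix $q\in{]1/H,3[}\subset[1,3[$, so that $\sigma B\in C^{q\textrm{-var}}([0,T],\mathbb R^e)$ pathwise.

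Next comes the reduction, which is the only step requiring real attention. When $\sigma$ is a constant matrix, the pair $(\sigma,0)$ is a controlled rough path with vanishing Gubinelli derivative and vanishing remainder; by Theorem~\ref{rough_integral} the correction terms then drop out and the rough integral degenerates to a plain increment, $\int_0^t\sigma\,d\mathbf B(s)=\sigma\bigl(B(t)-B(0)\bigr)=\sigma B(t)$. Hence the second line of~(\ref{fractional_sweeping_process}) becomes $H(t)=\int_0^t b(X(s))\,ds+\sigma B(t)$, which is exactly the form~(\ref{sweeping_process_additif}) with $f=b$, driving signal $Z(t)=t$ of finite $1$-variation, and additive perturbation $W=\sigma B\in C^{q\textrm{-var}}$. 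Thus, pathwise, the solutions of~(\ref{fractional_sweeping_process}) and of~(\ref{sweeping_process_additif}) coincide.

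Finally I would assemble the two halves. Existence of a solution whose paths lie in $C^{p\textrm{-var}}([0,T],\mathbb R^e)$ for every $p\in{]1/H,\infty[}$ is already provided by Theorem~\ref{theo:existence-fBm}. For uniqueness, note that any solution $(X,Y)$ decomposes as $X=H+Y$ with $Y$ of bounded variation and $H(t)=\int_0^t b(X(s))\,ds+\sigma B(t)$, so that $X\in C^{q\textrm{-var}}$ for every $q>1/H$; Proposition~\ref{uniqueness_RS}, applied pathwise with $p=1$ and additive signal $W=\sigma B$ of finite $q$-variation, $q\in[1,3[$, then forces any two solutions to coincide on $[0,T]$. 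Since $q$ may be taken arbitrarily close to $1/H$, the unique solution has paths in $C^{p\textrm{-var}}$ for all $p\in{]1/H,\infty[}$. The one hypothesis to check before invoking Proposition~\ref{uniqueness_RS} is that $b\in\textrm{Lip}^{\gamma}(\mathbb R^e,\mathbb R^e)$ for some $\gamma>1$, which follows from $b\in C^{[p]+1}(\mathbb R^e)$ together with the boundedness of its derivatives; everything else is a direct citation.
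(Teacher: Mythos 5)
Your proposal is correct and follows essentially the same route as the paper, whose proof is simply the one-line citation of Theorems \ref{existence_Young}, \ref{existence_rough} and Proposition \ref{uniqueness_RS}; you merely spell out the (correct) reduction that constancy of $\sigma$ collapses the rough integral to the additive increment $\sigma B(t)$, putting the problem in the form (\ref{sweeping_process_additif}) with $Z(t)=t$, $p=1$ and $W=\sigma B$ of finite $q$-variation for $q\in{]1/H,3[}\subset[1,3[$. The added checks (pathwise $q$-variation of $B$ via Garcia--Rodemich--Rumsey, and $b\in\textrm{Lip}^{\gamma}$) are exactly the hypotheses the paper uses implicitly.
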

\begin{proof}
This is a direct pathwise application of Theorem
\ref{existence_Young}, Theorem \ref{existence_rough} and Proposition \ref{uniqueness_RS}.
\end{proof}
\noindent
\textbf{Remark.} For instance, Proposition \ref{prop:uniqueness-fBm} ensures the existence and uniqueness of the solution to a multidimensional reflected fractional Ornstein-Uhlenbeck process.
%


%
\begin{proposition}\label{convergence_approximation_fBm}
Assume that, for every $t\in[0,T]$, $C(t)$ is a random set with convex compact values with nonempty interior,
and that the paths of $C$ are $\alpha$-H\"older continuous for the Hausdorff distance with $\alpha\in ]0,1[$. If $\sigma$ is a constant map, then the sequence of processes $(X^n)_{n\in\mathbb N^*}$ defined by
\begin{displaymath}
\left\{
\begin{array}{rcl}
 X_{0}^{n} & := & a\\
 X_{k + 1}^{n} & = & p_{C((k + 1)T/n)}(X_{k}^{n} + b(X_{k}^{n})T/n +\sigma B(kT/n,(k + 1)T/n))\textrm{ $;$ } k\in\llbracket 0,n - 1\rrbracket\\
 X^n(t) & := & X_{k}^{n}\textrm{ $;$ }t\in [kT/n,(k + 1)T/n[\textrm{$,$ }k\in\llbracket 0,n - 1\rrbracket
\end{array}\right.
\end{displaymath}
for every $n\in\mathbb N^*$ 
converges pathwise uniformly to the unique solution $X$ to Problem (\ref{fractional_sweeping_process}).
\end{proposition}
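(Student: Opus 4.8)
The plan is to recognize the statement as a pathwise instance of Theorem~\ref{convergence_subsequences}, after reducing the constant-$\sigma$ version of Problem (\ref{fractional_sweeping_process}) to the additive setting of Problem (\ref{sweeping_process_additif}). First I would argue that when $\sigma$ is constant the rough integral $\int_0^{\cdot}\sigma(X(s))\,d\mathbf W(s)$ collapses to the plain increment $\sigma B$: for the controlled path $y\equiv\sigma$ the Gubinelli derivative is $y'=0$ and $R_y=0$, so the estimate of Theorem~\ref{rough_integral}.(1) forces $\int_s^t\sigma\,d\mathbf B=\sigma B(s,t)$ for every $(s,t)\in\Delta_T$. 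Hence $H(t)=\int_0^t b(X(s))\,ds+\sigma B(t)$, which is exactly Problem (\ref{sweeping_process_additif}) with $p=1$, $f=b$, $Z(s)=s$ (of finite $1$-variation), and additive signal $W:=\sigma B$.

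Next I would identify the discrete recursion. Under the identifications $f=b$, $Z(s)=s$, $W=\sigma B$ and $t_k^n=kT/n$, one has $f(X_k^n)(t_{k+1}^n-t_k^n)=b(X_k^n)T/n$ and $W(t_k^n,t_{k+1}^n)=\sigma B(kT/n,(k+1)T/n)$, so the scheme defining $(X^n)_{n\in\mathbb N^*}$ in the statement is literally the catching-up scheme (\ref{approximation_skorokhod_problem}) of Section~5. Consequently the processes $X^n$, $H^n$, $Y^n$ appearing here coincide with those analysed in Lemmas~\ref{convergence_Hn}--\ref{inequality_Yn} and Theorem~\ref{convergence_subsequences}, and nothing new has to be rebuilt.

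Then I would check, sample path by sample path on the full-measure event where $B$ is H\"older continuous, that the hypotheses of Theorem~\ref{convergence_subsequences} hold. The assumption that each $C(t)$ is convex compact with nonempty interior is equivalent to Assumption~\ref{assumption_C} (see the remark following Assumption~\ref{assumption_C}); the $\alpha$-H\"older continuity of the paths of $C$ for the Hausdorff distance is precisely the extra hypothesis $d_H(C(s),C(t))\leqslant K|t-s|^{\alpha}$. Finally $W=\sigma B$ is $1/q$-H\"older, hence of finite $q$-variation, for every $q>1/H$; since $H>1/3$ one has $1/H<3$, so we may pick $q\in\,]1/H,3[\,\subset[1,3[$, making $W$ an admissible additive signal. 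Theorem~\ref{convergence_subsequences} then gives uniform convergence of $(X^n,Y^n)$ to the unique solution $(X,Y)$ of Problem (\ref{sweeping_process_additif}) (uniqueness by Proposition~\ref{uniqueness_RS}, restated in Proposition~\ref{prop:uniqueness-fBm}), and in particular $X^n\to X$ uniformly; performing this for almost every $\omega$ yields pathwise uniform convergence.

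The hard part is essentially bookkeeping: the only genuinely non-clerical points are the collapse of the rough integral for constant $\sigma$ (which is what makes both the scheme and the limit problem additive, so that the $p=1$ machinery of Section~5 applies rather than the full rough theory) and the admissibility of the variation index $q$, which rests on $1/H<3$, i.e.\ on the standing hypothesis $H>1/3$. Everything else is a direct transcription of Section~5 to fixed sample paths, and no new estimate is required.
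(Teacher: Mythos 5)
Your proposal is correct and follows exactly the paper's route: the paper's own proof is the one-line statement that this is a direct pathwise application of Theorem~\ref{convergence_subsequences}, and your argument simply makes explicit the reduction (collapse of the rough integral for constant $\sigma$, identification of the scheme with (\ref{approximation_skorokhod_problem}), and verification that $q\in\,]1/H,3[$ is admissible) that the paper leaves implicit.
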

%


%
\begin{proof}
This is a direct pathwise application of Theorem \ref{convergence_subsequences}.
\end{proof}
%


%
%

\end{document}